\theoremstyle{plain}
\newtheorem{thm}{Theorem}[section]
\newtheorem{prp}[thm]{Proposition}
\newtheorem{claim}[thm]{Claim}
\theoremstyle{definition}
\newtheorem{rem}{Remark} 
\newtheorem{defn}[thm]{Definition}
\def \hf{\hspace*{0.5cm}}                      
\def\bge{\begin{equation}}                
\def\ede{\end{equation}}                
\def\bgd{\begin{displaymath}}         
\def\edd{\end{displaymath}}            
\def\bgee{\begin{equation*}}           
\def\edee{\end{equation*}}           
\def\lra{\longrightarrow}
\def\lan{\langle}
\def\ran{\rangle}
\def\BA{\begin{eqnarray}}
\def\EA{\end{eqnarray}}
\def\BAA{\begin{eqnarray*}}
\def\EAA{\end{eqnarray*}}
\def\Bal{\begin{align*}}
\def\Eal{\end{align*}}
\def \P{\mathbb{P}}
\def \A{\mathcal{A}}
\def \Num{N}
\def \PP{\mathcal{P}}
\def \B{\mathcal{B}}
\def \25node{A_5} 
\def \62node{A_6}
\def \D{\mathcal{D}}
\def \DD{\mathcal{D}}
\def \X{\mathfrak{X}}
\def \DD{\hat{\D}}
\def \N{\nabla}
\def \hxt2{\hat{x}_{t_2}} 
\def \hyt2{\hat{y}_{t_2}}
\def \hf{\hspace*{0.5cm}}                      
\def\bge{\begin{equation}}                
\def\ede{\end{equation}}                
\def\bgd{\begin{displaymath}}         
\def\edd{\end{displaymath}}            
\def\bgee{\begin{equation*}}           
\def\edee{\end{equation*}}           
\def \P{\mathbb{P}}
\def \PP{\mathcal{P}}
\def \X{X} 
\def \A{A}
\def \D{D}
\def \N{N}
\def \lra{\longrightarrow}
\def \DD{\mathcal{D}}
\def \lan{\langle} 
\def \ran{\rangle} 
\def \B{\mathcal{B}}
\def \hf{\hspace*{0.5cm}}
\begin{document}


\title[Counting planar curves in $\mathbb{P}^3$ with degenerate singularities]{Counting planar curves in $\mathbb{P}^3$ with degenerate singularities}

\author[Nilkantha Das]{Nilkantha Das}
\address{School of Mathematical Sciences, National Institute of Science Education and Research, HBNI, Bhubaneswar, Odisha- 752 050, India.}

\email{nilkantha.das@niser.ac.in }
\author[Ritwik Mukherjee]{Ritwik Mukherjee}

\address{School of Mathematical Sciences, National Institute of Science Education and Research, HBNI, Bhubaneswar, Odisha- 752 050, India.}
\email{ritwikm@niser.ac.in}

\subjclass[2010]{14N35, 14J45}

\date{}


\begin{abstract} 
In this paper, we consider the following question: how many degree $d$ curves are there in $\mathbb{P}^3$ 
(passing through the right number of generic lines and points), 
whose image lies inside a $\mathbb{P}^2$, having 
$\delta$ nodes and one singularity of codimension $k$.
We obtain an explicit formula for this number 
when $\delta+k \leq 4$  
(i.e. the total codimension of the singularities is not more than four). 
We use a topological method to compute the degenerate contribution to the Euler class; it is an 
extension of the method that originates in the paper by A.~Zinger (\cite{Zin}) 
and which is further pursued by S.~Basu and the second author in \cite{R.M}, \cite{BM13_2pt_published} 
and \cite{BM8}. Using this method, 
we have obtained formulas when the 
singularities present are more degenerate than nodes (such as cusps, tacnodes and triple points). 
When the singularities are only nodes, we have verified that our answers are consistent with those  obtained by 
by S.~Kleiman and R.~Piene (in \cite{KP2}) and by 
T.~Laarakker (in \cite{TL}). 
We also verify that our answer for the characteristic number of planar cubics with a cusp 
and the number of planar quartics with two nodes and one cusp is consistent with the answer obtained by  
R.~Singh and the second author (in \cite{RS}), where they compute the characteristic 
number of rational planar curves in $\mathbb{P}^3$ with a cusp.
We also verify some of the numbers predicted by the conjecture made by Pandharipande in \cite{RPDeg}, 
regarding the enumerativity of BPS numbers for $\mathbb{P}^3$.  
\end{abstract}

\maketitle

\tableofcontents

\section{Introduction}
One of the most fundamental and studied problems in enumerative geometry is the following: what is 
the number of 
degree $d$ curves in $\mathbb{P}^2$ that have $\delta$ distinct nodes and that pass through $\frac{d(d+3)}{2} - \delta$ generic points? 
A more general question is to enumerate the characteristic number of curves that have more degenerate singularities.
To make this precise, let us make the following definition. 
\begin{defn}
Let $f:\mathbb{P}^2  \longrightarrow \mathcal{O}(d)$ be a holomorphic section. 
A point $q \in f^{-1}(0)$ is said to have a singularity of type $A_k$ or $D_k$
if there exists a coordinate system
$(x,y) :(U,q) \longrightarrow (\mathbb{C}^2,0)$ such that $f^{-1}(0) \cap U$ is given by 
\begin{align*}
A_{k \geq 1}: y^2 + x^{k+1}   &=0  \qquad \textnormal{and} \qquad D_{k \geq 4}: y^2 x + x^{k-1} =0.  
\end{align*}
\end{defn}
\hf \hf In more common terminology, $q$ is a 
{\it simple node} (or just node) if its singularity type is $A_1$; 
a {\it cusp} if its type is $A_2$; a {\it tacnode} if its type is $A_3$ 
and an {\it ordinary triple point} if its type is $D_4$. 

\begin{rem}
We will frequently use the phrase ``a singularity of codimension $k$''. Roughly speaking, 
this refers to the number of conditions having that singularity imposes on the space of curves. 
More precisely, it is the expected codimension of the equisingular strata.  
Hence, a singularity of type $A_k$ or $D_k$ 
is a singularity of codimension $k$. 
\end{rem}
\hf \hf A classical question in enumerative geometry is this:  
what is $N_d(A_1^{\delta})$, the number of 
degree $d$ curves in $\mathbb{P}^2$, that have 
$\delta$ distinct (ordered) nodes, that pass through $\frac{d(d+3)}{2}-\delta$ generic points? 
More generally, one can ask what is $N_d(A_1^{\delta} \mathfrak{X})$, the number of 
degree $d$ curves in $\mathbb{P}^2$, that have 
$\delta$ distinct (ordered) nodes and one singularity of type $\mathfrak{X}$, that pass through $\frac{d(d+3)}{2}-\delta-c_{\mathfrak{X}}$ generic points, 
where $c_{\mathfrak{X}}$ is the codimension of the singularity $\mathfrak{X}$? \\ 
\hf \hf The question of computing $N_d(A_1^{\delta})$ and $N_d(A_1^{\delta} \mathfrak{X})$ 
has been studied for a very long time starting with Zeuthen (\cite{Zu}) more than a hundred years ago. 
It has been studied extensively in the last thirty years from various perspectives by numerous mathematicians including amongst others,  
Z.~Ran (\cite{Ran1}, \cite{Ran}), I.~Vainsencher (\cite{Van}), L.~Caporaso and J.~Harris (\cite{CH}), M.~Kazarian (\cite{Kaz}), S.~Kleiman and R.~Piene (\cite{KP1}), 
D.~Kerner (\cite{Ker1} and \cite{Ker2}), 
F.~Block (\cite{FB}),
Y.~Tzeng and J.~Li (\cite{Tz}, \cite{Tzeng_Li}), M.~Kool, V.~Shende and R.~Thomas (\cite{KST}), S.~Fomin and G.~Mikhalkin (\cite{FoMi}), 
G.~Berczi (\cite{Berczi}) and S.~Basu and R.~Mukherjee (\cite{R.M}, \cite{BM13_2pt_published} and \cite{BM8}).\\
\hf \hf This problem  
motivates a natural generalization considered by Kleiman and Piene in \cite{KP2}, where they study the 
enumerative geometry of nodal curves in a moving family of 
surfaces (i.e. a fiber bundle version of the earlier question). 
More recently, this question has been studied further by  T.Laarakker in \cite{TL}. \\ 
\hf \hf Let us now state the question more precisely. 
We define a \textbf{planar curve} in $\mathbb{P}^3$ to be a curve in $\mathbb{P}^3$, whose image lies inside some $\mathbb{P}^2$. 
Let us define 
\begin{align*}
N_d^{\textnormal{Planar}, \mathbb{P}^3 }(A_1^{\delta} \mathfrak{X}; r,s) 
\end{align*}
to be the number of planar degree $d$ curves in $\mathbb{P}^3$, intersecting $r$ lines and passing through $s$ points, 
and having $\delta$ distinct nodes and one singularity of type $\mathfrak{X}$, where  
$r+2s = \frac{d(d+3)}{2}+3-(\delta +c_{\mathfrak{X}})$ and 
$c_{\mathfrak{X}}$ is the codimension of the singularity $\mathfrak{X}$.  
The result of S. Kleiman and R. Piene (\cite{KP2}) can be used to obtain a formula for  
$N_d^{\textnormal{Planar}, \mathbb{P}^3 }(A_1^{\delta}; r,s)$, if $\delta \leq 8$ (see section \ref{KP_check} for details). 
In \cite{TL}, T.Laarakker
obtains a formula for $N_d^{\textnormal{Planar}, \mathbb{P}^3 }(A_1^{\delta}; r,s)$, for all $\delta$. \\ 
\hf \hf The main result of this paper is as follows: 
\begin{thm}
\label{main_thm}
Let $\mathfrak{X}$ be a singularity of codimension $c_{\mathfrak{X}}$ 
and $\delta$ a  non negative integer. 
We obtain an explicit formula for $N_d^{\textnormal{Planar}, \mathbb{P}^3}(A_1^{\delta} \mathfrak{X}, r,s)$, when $\delta + c_{\mathfrak{X}} \leq 4$, 
provided $d \geq d_{\textnormal{min}}$, where $d_{\textnormal{min}}:= c_{\mathfrak{X}} + 2\delta.$
\end{thm}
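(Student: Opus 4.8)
The plan is to set up an enumerative problem as a count of zeros of an affine bundle section over an appropriate parameter space, and then to carefully analyze the degenerate contributions to the Euler class coming from the boundary of the locus we want to count. First I would build the parameter space: since a planar curve in $\mathbb{P}^3$ determines (generically) a unique plane, the natural ambient space is a bundle over the dual $\mathbb{P}^3$ of planes, whose fiber over a plane $\mathbb{P}^2$ is the linear system $\mathbb{P}(H^0(\mathbb{P}^2,\mathcal{O}(d)))$ of degree $d$ curves in that plane; call this $\mathcal{D}$. Imposing that the curve pass through $s$ points and meet $r$ lines cuts $\mathcal{D}$ down by hyperplane-type conditions, and imposing $\delta$ nodes at unspecified points is handled by working over a configuration-space-type bundle (the fiber adds $\delta$ marked points of $\mathbb{P}^2$, each constrained to be a node). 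The singularity $\mathfrak{X}$ of codimension $c_{\mathfrak{X}}$ is imposed at one further marked point; here I would invoke the stratification of singularity types described in the excerpt, so that the locus "$\mathfrak{X}$ at a point $q$" is the zero locus of a section of a bundle whose rank equals $c_{\mathfrak{X}}$, built from successive jets of $f$ at $q$.

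Next I would express $N_d^{\textnormal{Planar},\mathbb{P}^3}(A_1^\delta \mathfrak{X};r,s)$ as an Euler class / top intersection number on this parameter space, against the product of the hyperplane classes for the point and line incidence conditions and the classes defining the $\delta$ nodes. The key subtlety — and this is where the topological method of Zinger and of Basu–Mukherjee enters — is that the naive Euler number overcounts: the zero set of the "$\mathfrak{X}$-section" is not transverse and acquires spurious contributions along the boundary strata where the $\delta+1$ special points collide, or where a node degenerates into a worse singularity, or where the curve degenerates (e.g. becomes non-reduced or the plane becomes indeterminate). The plan is, for each such boundary stratum $Z$, to compute the \emph{local} contribution of $Z$ to the Euler class by a careful local analysis of the section near $Z$ — expanding $f$ in suitable coordinates, identifying the leading terms, and computing the multiplicity with which the section vanishes — and then to subtract these contributions. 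The hypothesis $\delta + c_{\mathfrak{X}} \le 4$ keeps the stratification manageable: only boundary strata of total codimension at most $4$ arise, and these can be enumerated and analyzed one by one. The hypothesis $d \ge d_{\textnormal{min}} = c_{\mathfrak{X}} + 2\delta$ ensures the relevant linear systems are large enough that the expected-dimension counts are the actual ones (ampleness/very-ampleness of $\mathcal{O}(d)$ after imposing the jet conditions), so that no unexpected components of the zero locus appear and the global-to-local reduction is valid.

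Concretely, the steps in order are: (i) define $\mathcal{D}$ and the incidence-constrained subspace, and the tautological jet bundles whose sections cut out each singularity type; (ii) write down the relevant Euler class as an integral over $\mathcal{D}$ (or its blown-up/configuration versions), reducing the problem to computing Chern classes of these bundles — the Chern classes of $\mathcal{O}(d)$-jet bundles on $\mathbb{P}^2$, pulled back, times the tautological class of the dual $\mathbb{P}^3$; (iii) enumerate all boundary strata where transversality fails, which by the codimension bound is a finite and explicit list (collisions of marked points, node-to-cusp/tacnode/triple-point degenerations, and the degenerate-plane locus); (iv) for each, perform the local Zinger-style computation of the contribution to the Euler class; (v) subtract and assemble into a closed formula, polynomial in $d$; (vi) sanity-check against the known values of Kleiman–Piene, Laarakker, Singh–Mukherjee, and Pandharipande's conjecture, as promised in the abstract. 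The main obstacle I expect is step (iv): the degenerate contribution computations are delicate — one must choose coordinates adapted to each stratum, correctly identify which jets of $f$ the section's leading term involves, and verify that the local multiplicity is computed with the right orientation and transversality of the \emph{residual} equations; getting even one of these local multiplicities wrong corrupts the final formula. A secondary difficulty is bookkeeping: ensuring the list of boundary strata in step (iii) is exhaustive (no stratum of codimension $\le 4$ is missed) and that overlapping strata are not double-counted, which requires a careful inclusion-exclusion over the closure relations among strata.
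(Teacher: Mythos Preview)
Your proposal is correct and follows essentially the same approach as the paper: set up the parameter space $\mathcal{D}$ as a projective bundle over $\widehat{\mathbb{P}}^3$, express the count as an Euler class, and compute degenerate boundary contributions via local analysis in the style of Zinger and Basu--Mukherjee. The one organizational refinement in the paper that you do not make explicit is the systematic use of auxiliary spaces $\mathcal{P}\mathfrak{X}$ carrying a marked tangent direction at the singular point, so that each step from $\mathcal{P}A_{k}$ to $\mathcal{P}A_{k+1}$ (or to $\mathcal{P}D_4$) is cut out by a single \emph{line} bundle section on the projectivized tangent bundle $\mathbb{P}W_{\mathcal{D}}$; this turns your step (ii) into a recursive chain of formulas rather than a single rank-$c_{\mathfrak{X}}$ Euler class computation, which in practice is what makes the boundary analysis in step (iv) tractable.
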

In section \ref{low_degree_checks}, we verify that when the singularities present are only nodes, 
our answers agree with the answers obtained by S.~Kleiman and R.~Piene (in \cite{KP2}) and by T.~Laarakker (in \cite{TL}). 
We also verify some of the numbers predicted by the conjecture made by R.~Pandharipande in \cite{RPDeg}, 
regarding the enumerativity of the BPS numbers for $\mathbb{P}^3$.  \\
\hf \hf Very recently, a stable map version of this question has been studied by the second author, A.~Paul and R.~Singh (in \cite{MPS}). 
In that paper, the authors find a formula for the characteristic number of planar genus zero (rational) degree $d$-curves in $\mathbb{P}^3$. 
Building up on the results of that 
paper, the second author of this paper and R.~Singh
obtain a formula for  the characteristic number of planar genus zero (rational) degree $d$-curves in $\mathbb{P}^3$ 
having a cusp (in \cite{RS}). In section \ref{low_degree_checks}, we also verify that our formula for 
$N_d^{\textnormal{Planar}, \mathbb{P}^3 }(A_2; r,s)$ and $N_d^{\textnormal{Planar}, \mathbb{P}^3 }(A_1^2 A_2; r,s)$ is logically 
consistent with the formula obtained in \cite{RS},
when $d=3$ and $d=4$ respectively.
\begin{rem}
\label{KP_remark}
In \cite[Theorem 1.2]{KP1}, the authors compute the corresponding numbers $N(A_1^{\delta} \mathfrak{X})$ for a fixed surface, 
while  in \cite{KP2} an algorithm is developed to compute $N(A_1^{\delta})$ for a family of surfaces. 
It ought to be possible to
generalize the algorithm developed in \cite{KP2} to higher singularities
and  
compute all the numbers obtained by 
\Cref{main_thm} (this has been point out to us by S. Kleiman \cite{KP3}).
\end{rem}


\section{Setup and Notation} 
\label{notation}
\noindent Let us now describe the setup develop some notation to obtain a formula for the numbers stated in \Cref{main_thm}. 
Our basic objects are planar degree degree $d$ curves in $\mathbb{P}^3$, i.e. degree $d$ curves in $\mathbb{P}^3$ 
whose image lies inside a $\mathbb{P}^2$. 
Let us denote the dual of $\mathbb{P}^3$ by 
$\widehat{\mathbb{P}}^3$; this is the space of $\mathbb{P}^2$ inside $\mathbb{P}^3$. 
An element of $\widehat{\mathbb{P}}^3$ can be thought of as 
a nonzero linear functional $\eta: \mathbb{C}^4 \longrightarrow \mathbb{C}$ upto scaling (i.e., it is the projectivization of the dual of $\mathbb{C}^4$).  
Given such an $\eta$, we  define the projectivization of its zero set as $\mathbb{P}^2_{\eta}$. In other words, 
\begin{align*}
\mathbb{P}^2_{\eta} &:= \mathbb{P}(\eta^{-1}(0)). 
\end{align*}
Note that this $\mathbb{P}^2_{\eta}$ is a subset of $\mathbb{P}^3$.\\ 
\hf\hf Next, given a positive integer $\delta$,  let us define 
\begin{align*}
\mathcal{S}_{\delta} &:= \{ ([\eta], q_1, \ldots, q_{\delta}) \in \widehat{\mathbb{P}}^3 \times (\mathbb{P}^3)^{\delta}: \eta(q_1) =0, \ldots, \eta(q_{\delta}) =0\}. 
\end{align*}
Clearly $\mathcal{S}_{\delta}$ is a fiber bundle over $\widehat{\mathbb{P}}^3$ with fiber $(\P^2)^{\delta}$. This is a plane in $\mathbb{P}^3$ and a collection of $\delta$ points that lie on that plane.
We  will often abbreviate $\mathcal{S}_1$ as $\mathcal{S}$. 
Let us consider the section of the following line bundle 
induced by the evaluation map, i.e.  
\begin{align*}
\textnormal{ev}:\widehat{\mathbb{P}}^3 \times \mathbb{P}^3 \longrightarrow \gamma_{\widehat{\mathbb{P}}^3}^{*}\otimes \gamma_{\mathbb{P}^3}^*, \qquad 
\textnormal{given by} \qquad \{\textnormal{ev}([\eta], [q])\}(\eta \otimes q):= \eta(q), 
\end{align*}
where $\gamma_{\widehat{\mathbb{P}}^3}^{*}$ and $\gamma_{\mathbb{P}^3}^*$ are dual of the tautological line bundles over $\widehat{\mathbb{P}}^3$ 
and $\mathbb{P}^3$ respectively (or equivalently $\mathcal{O}_{\widehat{\mathbb{P}}^3}(1)$ and $\mathcal{O}_{\mathbb{P}^3}(1)$ respectively).  
Note that 
\begin{align}
\mathcal{S}&= \textnormal{ev}^{-1}(0). \label{s_ev} 
\end{align}
Next, let us denote $\mathcal{D} \longrightarrow \widehat{\mathbb{P}}^3$ to be 
a fiber bundle over $\widehat{\mathbb{P}}^3$, such that the fiber over each $[\eta] \in \widehat{\mathbb{P}}^3$ 
is the space of degree $d$ curves in $\mathbb{P}^2_{\eta}$. Next, we note that 
$\widehat{\mathbb{P}}^3$ is naturally isomorphic to $\mathbb{G}(3,4)$. Let us denote $\gamma_{3,4} \longrightarrow \mathbb{G}(3,4)$ 
to be the tautological three plane bundle over the Grassmannian. Hence, via this isomorphism we note that   
\begin{align*}
\mathcal{D} & \approx \mathbb{P}(\textnormal{Sym}^{d} \gamma_{3,4}^{*}) \longrightarrow \widehat{\mathbb{P}}^3.
\end{align*}
Hence, $\mathcal{D}$ is a fiber bundle over $\widehat{\mathbb{P}}^3$, whose fibers are isomorphic to $\mathbb{P}^{\frac{d(d+3)}{2}}$.
An element of $\mathcal{D}$ will be denoted by $([f],  [\eta])$; this means that $f$ is a homogeneous degree $d$-polynomial defined on $\mathbb{P}^2_{\eta}$. \\
\hf \hf Next, given a positive integer $\delta$, let us define 
\begin{align*}
\mathcal{S}_{\mathcal{D}_{\delta}} := \{ ([f], [\eta], q_1, \ldots, q_{\delta}) 
\in \mathcal{D} \times (\mathbb{P}^3)^{\delta}&: ([\eta], q_1, \ldots, q_{\delta}) \in \mathcal{S}_{\delta}\}.
\end{align*}
Note that $\mathcal{S}_{\mathcal{D}_{\delta}}$ can be considered as pull back bundle of $\mathcal{D}$ via the fiber bundle map $\pi: \mathcal{S}_{\delta} \rightarrow \widehat{\mathbb{P}}^3$, i.e. the following diagram 
\begin{displaymath}
\xymatrix{
\mathcal{S}_{\mathcal{D}_{\delta}} \ar[d]_-{\pi_{\mathcal{D}}^*} \ar[r] & \mathcal{D} \ar[d]^-{\pi_{\mathcal{D}}} \\
\mathcal{S}_{\delta} \ar[r]_-{} & \widehat{\mathbb{P}}^3
}
\end{displaymath}
commutes. We will  abbreviate $\mathcal{S}_{\mathcal{D}_{1}}$ as $\mathcal{S}_{\mathcal{D}}$. 
Next, let $X_1, X_2, \ldots, X_{\delta}$ be subsets of $\mathcal{S}_{\mathcal{D}}$. 
We define 
\begin{align*}
X_1 \circ X_2 \circ \ldots \circ X_{\delta} := \{ ([f], [\eta], q_1, \ldots, q_{\delta}) \in \mathcal{S}_{\mathcal{D}_{\delta}}& : ([f],  [\eta], q_i) \in X_i ~~\forall 
~i = 1 ~\textnormal{to} ~\delta \qquad \textnormal{and} \\ 
                                        & \qquad \qquad \qquad \qquad \quad q_i \neq q_j \qquad \textnormal{if} ~~i \neq j \}.    
\end{align*}
We will make the following abbreviation
\begin{align*}
X_1^{\delta_1} \circ X_2^{\delta_2} \ldots X_m^{\delta_m} &:= 
\underbrace{X_1 \circ \ldots \circ X_1}_{\textnormal{$\delta_1$ times}} \circ 
\underbrace{X_2 \circ \ldots \circ X_2}_{\textnormal{$\delta_2$ times}} \circ  \ldots  
\circ \underbrace{X_m \circ \ldots \circ X_m}_{\textnormal{$\delta_m$ times}}.
\end{align*}
When $\delta_i =1$, we will omit writing the superscript. For example, 
\begin{align*}
X_1 \circ X_2^{3} \circ X_3 &= X_1^1 \circ X_2^{3} \circ X_3^1 = X_1 \circ X_2 \circ X_2 \circ X_2 \circ X_3. 
\end{align*}
Next, let $\mathfrak{X}$ be a singularity of a given type. 
We will also denote $\mathfrak{X}$ to be the space of curves and a marked point
such that the curve has a singularity of type $\mathfrak{X}$ at the marked point. 
More precisely, 
\begin{align*}
\mathfrak{X} &:= \{ ([f], [\eta], q) \in \mathcal{S}: ~f ~~\textnormal{has a signularity of type $\mathfrak{X}$ at $q$} \}. 
\end{align*}
For example, 
\begin{align*}
A_2 &:= \{ ([f], [\eta], q) \in \mathcal{S}: ~f ~~\textnormal{has a signularity of type $A_2$ at $q$} \}. 
\end{align*}
For example, $A_1^{2} \circ A_2$ is the space of curves with 
three ordered points, where the curve has a simple node at the first two points and a cusp at the 
last point and all the three points are distinct. 
Similarly, $A_1^{2} \circ \overline{A}_2$
is the space of curves with 
three distinct ordered points, where the curve has a simple node at the first two points and a 
singularity at least as degenerate as a cusp at the 
last point; the curve could have a tacnode at the last marked point 
(here  $\overline{X}$ indicates the closure of $X$). \\ 
\hf \hf Next, consider the following rank two vector bundle  $\pi:W \longrightarrow \mathcal{S}$, where the fiber over each point 
$([\eta], q)$ is the tangent space of $\mathbb{P}^2_{\eta}$ at the point $q$, i.e. 
\begin{align}
\pi^{-1}([\eta],  q) &:= T\mathbb{P}^2_{\eta}|_{q}. \label{W_defn}
\end{align}
Let $W_{\DD}\longrightarrow \mathcal{S}_{\DD}$ denote 
the pullback of $W$ to $\mathcal{S}_{\DD}$ and let $\mathbb{P}W_{\DD} \longrightarrow \mathcal{S}_{\DD}$ 
denote the projectivization of $W_{\DD}$.
We can now define the space of curves having a singularity singularity of certain type together with a direction, i.e. if $\mathfrak{X}$ be singularity of a given type, then define 
\begin{align*}
\widehat{\mathfrak{X}} & :=  \{ ([f], [\eta], l_q) \in \mathbb{P} W_{\mathcal{D}} : f \text{ has a singularity of type } \mathfrak{X} \text{ at } q \}.
\end{align*} 
We can also define the space of curves with a singularity and a specific direction along which certain directional derivatives vanish, i.e. 
\begin{align*}
\mathcal{P}\A_k &:= \{ ([f], [\eta], l_q) \in \mathbb{P} W_{\mathcal{D}}: ([f], [\eta], q) \in \A_k, ~~\nabla^2 f|_{q}(v, \cdot) =0 
\qquad \forall v \in l_q\} \qquad \textnormal{if ~~$k \geq 2$}. 
\end{align*}
For example, $\PP A_2$ is the space of curves with a marked point and a marked direction, such that 
the curve has a cusp at the marked point and the marked direction belongs to the kernel of the Hessian. 
Note that  
the projection map $\pi : \PP A_k \lra A_k$ 
is one to one. 
Next, let us define 
\begin{align*}
\mathcal{P}\A_1 &:= \{ ([f], [\eta], l_q) \in \mathbb{P} W_{\mathcal{D}}: ([f], [\eta], q) \in \A_1, ~~\nabla^2 f|_q(v, v) =0 
\qquad \forall v \in l_q\} \qquad \textnormal{and} \\
\mathcal{P}\D_4 &:= \{ ([f], [\eta], l_q) \in \mathbb{P} W_{\mathcal{D}}: ([f], [\eta], q) \in \D_4, ~~\nabla^3 f|_q(v, v, v) =0 
\qquad \forall v \in l_q\}. 
\end{align*}
In other words, $\PP A_1$ is the space of curves with a marked point and a marked direction, such that 
the curve has a node at the marked point and the second derivative along the marked direction vanishes. 
Note that there are two such distinguished directions. Hence, the projection map  
$\pi : \PP A_1 \lra A_1$ 
is two to one. Similarly, the projection map $\pi : \PP D_4 \lra D_4$ is three to one.\\ 
\hf \hf Next, let 
$\mathcal{S}_{\mathcal{D}_{\delta}}\times_{\mathcal{D}} \mathbb{P} W_{\mathcal{D}}$ 
denote 
the fibered product of $\mathcal{S}_{\mathcal{D}_{\delta}}$ and $\mathbb{P} W_{\mathcal{D}}$ over $\mathcal{D}$ via the natural forgetful map. It can be considered as a fiber bundle over $\widehat{\mathbb{P}}^3$ whose fiber over each point $[\eta] \in \widehat{\mathbb{P}}^3$ is 
\begin{align*}
\mathbb{P}(H^0(\mathcal{O}(d), \mathbb{P}^2_{\eta})) \times (\mathbb{P}^2_{\eta})^{\delta} \times \mathbb{P}(T \mathbb{P}^2_{\eta}). 
\end{align*}
Let 
$\pi:\mathcal{S}_{\mathcal{D}_{\delta}}\times_{\mathcal{D}} \mathbb{P} W_{\mathcal{D}} \longrightarrow 
\mathcal{S}_{\mathcal{D}_{\delta+1}}$ denote the projection map. 
If $S$ is a subset of $\mathcal{S}_{\mathcal{D}_{\delta+1}}$, then we define 
\begin{align}
\widehat{S} &:= \{ ([f], [\eta], q_1, \ldots, q_{\delta}, l_{q_{\delta+1}}) \in 
\mathcal{S}_{\mathcal{D}_{\delta}}\times_{\mathcal{D}} \mathbb{P} W_{\mathcal{D}}: 
([f], [\eta], q_1, \ldots, q_{\delta+1}) \in S \} = \pi^{-1}(S). \label{hat_S_defn}
\end{align}
Finally, if $S_1, \ldots S_n$ are subsets of $\mathcal{S}_{\mathcal{D}}$ and 
$T$ is a subset of $\mathbb{P}W_{\mathcal{D}}$, then we define 
\begin{align*}
S_1 \circ S_2 \circ \ldots \circ S_{\delta}\circ T &:= \{ ([f], [\eta], q_1, \ldots, q_{\delta}, l_{q_{\delta+1}}) \in 
\mathcal{S}_{\mathcal{D}_{\delta}}\times_{\mathcal{D}} \mathbb{P} W_{\mathcal{D}}:  
([f], q_1) \in S_1, \ldots, ([f], q_{\delta}) \in S_{\delta}, \\ 
& \qquad \qquad \qquad \qquad \qquad \qquad \qquad \qquad \qquad \qquad ([f], l_{q_{\delta+1}}) \in T \qquad \textnormal{and}\\ 
& \qquad \qquad \qquad \qquad \qquad \qquad  \qquad \qquad \qquad \qquad q_1, \ldots, q_{\delta}, q_{\delta+1} 
\qquad \textnormal{are all distinct}\}.
\end{align*}
As an example, $A_1^{2} \circ \PP A_2$ is the space of curves with 
three distinct ordered points, where the curve has a simple node at the first two points and a cusp at the 
last point and a distinguished direction at the last marked point, 
such that the Hessian vanishes along that direction. 

\section{Cohomology ring structure of projective fiber bundles} 
\label{coh_ring}
We now recapitulate some basic facts about the cohomology ring of the various spaces we will encounter. 
First, we recall that via the annihilation map, $\widehat{\mathbb{P}}^3$ is isomorphic to $\mathbb{G}(3,4)$. 
Via this isomorphism, we can think of $a$ (which is actually a generator of $H^*(\widehat{\mathbb{P}}^3)$) 
as a generator of $H^*(\mathbb{G}(3,4))$. We note that 
\begin{align*}
c(\gamma_{3,4}^*) & = 1 + a  + a^2 + a^3. 
\end{align*}
Next, using the splitting principle, we conclude that 
\begin{align}
c(\textnormal{Sym}^d \gamma_{3,4}^*) & = 1 + s_1 a + s_2 a^2 + s_3 a^3, \qquad \textnormal{where} \\ 
s_1 & := \frac{d(d+1)(d+2)}{6}, \nonumber \qquad 
s_2 := \frac{d(d+1)(d+2)(d+3)(d^2+2)}{6} \qquad \textnormal{and} \nonumber \\ 
s_3 &:= \frac{d(d+1)(d+2)(d+3)(d^2+2)(d^3+3d^2+12d+12)}{1296}. \label{si}
\end{align}
Notice that $\mathcal{D}= \mathbb{P}(\textnormal{Sym}^d \gamma_{3,4}^*)$, is a $\mathbb{P}^{n-1}$ 
bundle, where 
\begin{align}
n:= 1 + \frac{d(d+3)}{2}. \label{n_dim_vec_sp} 
\end{align}
Hence, we conclude (by the Leray Hirsch Theorem) that 
the cohomology ring structure of $\mathcal{D}$ is given by 
\begin{align}
H^*(\mathcal{D}) & \approx \frac{\mathbb{Z}[a, \lambda]}{\langle a^4, ~~\lambda^n + s_1 a\lambda^{n-1} + s_2 a^2\lambda^{n-2} + s_3 a^3\lambda^{n-3}\rangle}, 
\label{ring_str}
\end{align}
where $\gamma_{\DD} \longrightarrow \mathbb{P}(\textnormal{Sym}^d \gamma_{3,4}^*)$ denotes the tautological line bundle 
and $\lambda:= c_1(\gamma_{\DD}^*)$.

\section{Intersection Numbers}
Let $\gamma_{W}\longrightarrow \mathbb{P} W$ denote the tautological line bundle over 
the projective bundle $\mathbb{P}W \longrightarrow \mathcal{S}$. We denote 
$\lambda_{W} := c_1(\gamma_{W}^*)$ 
and $H$ to be the standard generator of $H^*(\mathbb{P}^3)$
(i.e. the class of a hyperplane in $\mathbb{P}^3$).  \\ 
\hf \hf We are now in a position to define a few numbers. 
Since we will primarily be dealing with planar degree $d$-curves in $\mathbb{P}^3$, we will usually use the prefix 
$N$ as opposed to the more elaborate $N_d^{\textnormal{Planar},\mathbb{P}^3}$. If there is a chance for confusion, we will 
use the latter notation.\\ 
\hf \hf We will occasionally be dealing with curves in $\mathbb{P}^2$. In such a case we will use the notation 
$N_d^{\mathbb{P}^2}$; we will never use $N$ in such a case. Let us now define 
\begin{align}
\Num(A_1^{\delta} \mathfrak{X}, r, s, n_1, n_2, n_3) &:= \langle a^{n_1} \lambda^{n_2} \pi_{\delta+1}^*H^{n_3}, 
~~[\overline{A_1^{\delta} \circ \mathfrak{X}}]\cap \mathcal{H}_L^r \cap \mathcal{H}_p^s \rangle,\\
\Num(A_1^{\delta} \PP \mathfrak{X}, r, s, n_1, n_2, n_3, \theta) &:= 
\langle a^{n_1} \lambda^{n_2} \pi_{\delta+1}^*H^{n_3} \lambda_{W}^{\theta}, 
~~[\overline{A_1^{\delta} \circ \PP \mathfrak{X}}]\cap \mathcal{H}_L^r \cap \mathcal{H}_p^s \rangle \qquad \textnormal{and}  \\ 
\Num(A_1^{\delta} \widehat{\mathfrak{X}}, r, s, n_1, n_2, n_3, \theta) &:= 
\langle a^{n_1} \lambda^{n_2} \pi_{\delta+1}^*H^{n_3} \lambda_W^{\theta}, 
~~[\overline{A_1^{\delta} \circ \widehat{\mathfrak{X}}}]\cap \mathcal{H}_L^r \cap \mathcal{H}_p^s \rangle. 
\end{align}
Here $\pi_i$
denotes the projection onto the 
$i^{\textnormal{th}}$-point.\\
\hf \hf Next, we note that if $\theta \geq 2$, then 
\begin{align}
\Num(A_1^{\delta} \PP \mathfrak{\X}, r, s, n_1, n_2, n_3, \theta) & = -3 \Num(A_1^{\delta} \PP \mathfrak{\X}, r, s, n_1, n_2, n_3+1, \theta-1)  \nonumber \\ 
                                                                  & + \Num(A_1^{\delta} \PP \mathfrak{\X}, r, s, n_1+1, n_2, n_3, \theta-1)\nonumber \\
                                                                  & -\Num(A_1^{\delta} \PP \mathfrak{\X}, r, s, n_1+2, n_2, n_3, \theta-2) \nonumber \\ 
                                                                  & + 2\Num(A_1^{\delta} \PP \mathfrak{\X}, r, s, n_1+1, n_2, n_3+1, \theta-2) \nonumber \\ 
                                                                  & -3 \Num(A_1^{\delta} \PP \mathfrak{\X}, r, s, n_1, n_2, n_3+2, \theta-2). \label{lm_reduce}
\end{align}
This is because 
\begin{align*}
\lambda_{W}^2 & = -c_1(W) \lambda_W - c_2(W) \implies \lambda_{W}^2 =  -(3H-a) \lambda_{W} -(a^2-2aH+ 3 H^2).
\end{align*}
\noindent The Chern classes $c_1(W)$ and $c_2(W)$ are given by \cref{ciW}. Next, we note that 
\begin{align}
\Num(\A_1^{\delta} \mathfrak{X}, r, s, n_1, n_2, n_3) & = \frac{1}{\textnormal{deg}(\pi)} 
\Num(\A_1^{\delta} \PP \mathfrak{X}, r, s, n_1, n_2, n_3, 0),   \label{deg_to_one_up_to_down}
\end{align}
where $\textnormal{deg}(\pi)$ is the degree of the projection map $\pi:\PP \mathfrak{X}\lra\mathfrak{X}$. 
We remind the reader that 
the degree is one when $\mathfrak{X} = A_{k \geq 2}$, it is two when 
$\mathfrak{X} = A_1$ and it is three when $\mathfrak{X} = D_4$. \\
We also note that
\begin{align}
\Num(A_1^{\delta} \widehat{\mathfrak{X}}, n_1, n_2, n_3, \theta) & = 0 \qquad \textnormal{if} \qquad \theta =0, \nonumber \\ 
\Num(A_1^{\delta} \widehat{\mathfrak{X}}, n_1, n_2, n_3, \theta) & = \Num(A_1^{\delta} \mathfrak{X}, n_1, n_2, n_3) 
\qquad \textnormal{if} \qquad \theta =1 \qquad 
\textnormal{and} \nonumber \\ 
\Num(A_1^{\delta} \widehat{\mathfrak{X}}, n_1, n_2, n_3, \theta) & = 
\Num(A_1^{\delta} \widehat{\mathfrak{X}}, n_1, n_2+1, n_3, \theta-1) 
-\Num(A_1^{\delta} \widehat{\mathfrak{X}}, n_1, n_2, n_3+1, \theta-2) \nonumber \\ 
& \qquad \qquad \textnormal{if} \qquad \theta>1. \label{deg_to_one_up_to_down_hat}
\end{align}
Finally, let us define 
 \begin{align}
N(r,s,n_1, n_2) &:= \langle a^{n_1} \lambda^{n_2}, ~[\mathcal{D}]\cap \mathcal{H}_L^{r} \cap \mathcal{H}_p^s \rangle.  
\end{align}
We now note that 
\begin{align}
\mathcal{H}_L  & = \lambda + d a \qquad \textnormal{and} \qquad \mathcal{H}_p = \lambda a. \label{HL_Hp_class}  
\end{align}
The reason why this is true is explained in \cite[Pages 18 and 19]{Zing_Notes}. 
Now, using the ring structure of $\mathcal{D}$ (as given by \cref{ring_str}), 
we can compute $N(r,s,n_1, n_2)$ 
by extracting the coefficient of $a^3 \lambda^{n-1}$ from  
\begin{align*}
(\lambda + d a)^r (\lambda a)^{s} a^{n_1} \lambda^{n_2}. 
\end{align*}
Hence, $N(r,s,n_1, n_2)$ can be computed for any $r, s,  n_1$ and $n_2$.  


\section{Recursive Formulas} 
\label{recursive_formulas}
We are now ready to state the recursive formulas. 
We have written a mathematica program to implement these recursive formulas and 
obtain the final formulas. The program is available on the second author's homepage 
\[ \textnormal{\url{https://www.sites.google.com/site/ritwik371/home}} \]
For the convenience of the reader, we have explicitly written down the formulas 
for $N(r,s,0,0)$ and $N(A_1^{\delta} \mathfrak{X}, r, s, 0,0)$ 
in \Cref{expfor}. 
Note that $N(r,s,0,0)$ is the number of planar degree-$d$ curves intersecting $r$ lines and passing through $s$ points.
Our formulas for $N(A_1^{\delta},r, s, 0, 0, 0)$ agree with those obtained by 
Kleiman and Piene in \cite{KP2} and by Ties  Laarakker in \cite{TL}. \\

\begin{thm}
\label{na1_again}
Consider the ring 
\begin{align*}
\mathcal{R}&= \frac{\mathbb{Z}[a, H, \lambda]}{\langle a^4, ~~H^4, ~~\lambda^n + s_1 a\lambda^{n-1} + s_2 a^2\lambda^{n-2} + s_3 a^3\lambda^{n-3}\rangle}, 
\end{align*}
where $s_1$, $s_2$, $s_3$ and $n$ are as defined in \cref{si} and \cref{n_dim_vec_sp}. Let 
\begin{align*}
e & := (\lambda + H) (\lambda + d a)^r (\lambda a)^s a^{n_1} \lambda^{n_2} H^{n_3}(\lambda + dH)\Big((\lambda + dH)^2 -(3H-a)(\lambda + dH) + a^2 -2aH + 3H^2\Big).
\end{align*}
Then $N(A_1, r, s, n_1, n_2, n_3)$ is the coefficient of $\lambda^{n-1} a^3 H^3$ in the polynomial $e$, seen as an element of the ring $\mathcal{R}$.
\end{thm}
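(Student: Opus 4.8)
The plan is to identify the number $\Num(A_1,r,s,n_1,n_2,n_3)$ as an integral over the incidence variety $\overline{A_1}\subset \mathcal{S}_{\mathcal{D}}$ and then to express that integral as a coefficient extraction in a purely algebraic cohomology ring. The starting point is the description of $A_1$ as the zero locus of a section of an appropriate bundle. Over $\mathcal{S}_{\mathcal{D}}$ (which fibers over $\widehat{\mathbb{P}}^3$ with fiber $\mathbb{P}(H^0(\mathcal{O}(d),\mathbb{P}^2_\eta))\times\mathbb{P}^2_\eta$), the condition that $f$ vanishes at $q$ to order at least two is cut out by the $1$-jet section of $f$, i.e. a section of $J^1(\gamma_{\mathcal{D}}^*)\otimes(\cdots)$ whose vanishing forces $f(q)=0$ and $\nabla f|_q=0$; generically this zero locus is exactly $A_1$, and it is smooth of the expected codimension $3$. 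First I would write down this bundle explicitly, compute its total Chern class in terms of $a$, $\lambda$ and $H$ (where $H=\pi_1^*H$ restricted to $\mathcal{S}_{\mathcal{D}}$ records the $\mathbb{P}^3$-point of $q$, and $c_1(W)=3H-a$, $c_2(W)=a^2-2aH+3H^2$ as already recorded), and thereby get that $[\overline{A_1}]$ is Poincaré dual to the top Chern class $c_3$ of that rank-$3$ bundle.

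The key computation: the rank-$3$ bundle splits (up to the euler class bookkeeping) as the direct sum of $\gamma_{\mathcal{D}}^*\otimes\gamma_{\mathbb{P}^3}^*$-type line bundle accounting for the value $f(q)=0$ — whose first Chern class is $\lambda+H$ after the identification, matching the leading factor $(\lambda+H)$ in $e$ — together with $W^*\otimes\gamma_{\mathcal{D}}^*$ accounting for $\nabla f|_q=0$, whose Chern classes in terms of $\lambda+dH$ (the twist of $\gamma_{\mathcal{D}}^*$ along the fibre) and $c_1(W),c_2(W)$ give exactly the cubic factor
\[
(\lambda+dH)\Big((\lambda+dH)^2-(3H-a)(\lambda+dH)+a^2-2aH+3H^2\Big).
\]
So $c_3$ of the full bundle is the product of these, which is the factor of $e$ sitting to the right of $a^{n_1}\lambda^{n_2}H^{n_3}$. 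Then $\Num(A_1,r,s,n_1,n_2,n_3)$ is $\langle a^{n_1}\lambda^{n_2}H^{n_3}\,c_3,\ [\mathcal{S}_{\mathcal{D}}]\cap\mathcal{H}_L^r\cap\mathcal{H}_p^s\rangle$; pushing this forward to $\mathcal{D}$ (the fibre $\mathbb{P}^2_\eta$ of the $q$-factor contributes by integrating out $H$, which is why one evaluates the coefficient of $H^3$) and using $\mathcal{H}_L=\lambda+da$, $\mathcal{H}_p=\lambda a$ together with the ring structure \cref{ring_str} reduces the pairing to extracting the coefficient of $\lambda^{n-1}a^3H^3$ in $e$ as an element of $\mathcal{R}$. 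The $H^4=0$ relation in $\mathcal{R}$ is legitimate because $H$ is pulled back from $\mathbb{P}^3$.

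I expect the main obstacle to be justifying two transversality/excess-intersection points cleanly: first, that the naive zero locus of the $1$-jet section is generically reduced and of expected dimension so that its fundamental class really is $c_3$ with no correction term (this is where one invokes that $d\ge d_{\min}$ and a Bertini-type genericity argument on $\mathcal{D}$); and second, that restricting cohomology classes from the ambient $\mathcal{D}\times\mathbb{P}^3$ to $\mathcal{S}_{\mathcal{D}}=\mathrm{ev}^{-1}(0)$ and then pairing is correctly bookkept by the leading $(\lambda+H)$ factor — i.e. that capping with $[\mathcal{S}_{\mathcal{D}}]$ inside $\mathcal{D}\times\mathbb{P}^3$ and extracting the $H^3$ coefficient is the same as working on $\mathcal{S}_{\mathcal{D}}$ directly. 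Once these are in place, everything else is the routine Chern-class algebra sketched above, and the statement follows by comparing the resulting coefficient-extraction with the definition of $e$.
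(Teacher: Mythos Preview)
Your overall strategy matches the paper's: express $[\overline{A_1}]$ as the Euler class of a rank-$3$ bundle (value plus first derivative), push forward from $\mathcal{D}\times\mathbb{P}^3$ using the divisor class of $\mathcal{S}_{\mathcal{D}}$, and extract the top coefficient in the cohomology ring. The paper carries this out exactly, noting that for $\delta=0$ the boundary $\mathcal{B}$ is empty so no excess contribution arises; transversality is checked by an explicit tangent-vector computation (constructing curves $t\mapsto [f+t\rho_{ij}]$) rather than a Bertini argument, but this is a minor difference.

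There is, however, a bookkeeping confusion in your factor attribution. You write that the line bundle for the condition $f(q)=0$ has Chern class $\lambda+H$ and that the rank-$2$ derivative bundle $\gamma_{\mathcal{D}}^*\otimes W^*$ accounts for the full cubic $(\lambda+dH)\big((\lambda+dH)^2-\cdots\big)$. This cannot be right: a rank-$2$ bundle has a degree-$2$ Euler class. The correct decomposition (which the paper uses) is
\[
e(\mathcal{L}_{A_0})=\lambda+dH,\qquad e(\mathcal{V}_{A_1})=(\lambda+dH)^2-c_1(W)(\lambda+dH)+c_2(W),
\]
so the rank-$3$ Euler class is their product. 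The remaining factor $(\lambda+H)$ is not a bundle contribution at all: it is the class of the incidence divisor $\mathcal{S}_{\mathcal{D}}=\mathrm{ev}^{-1}(0)$ inside $\mathcal{D}\times\mathbb{P}^3$, encoding the constraint $q\in\mathbb{P}^2_\eta$. You actually say this correctly in your final paragraph (``capping with $[\mathcal{S}_{\mathcal{D}}]$\ldots is correctly bookkept by the leading $(\lambda+H)$ factor''), so your proposal is internally inconsistent. Once you straighten out which factor is which, the argument goes through as in the paper.
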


\begin{rem}
\Cref{na1_again} is true for all $d \geq 1$.  
\end{rem}
Next, we will give a formula for $N(A_1^{\delta}A_1,r, s, n_1, n_2, n_3)$, when $1 \leq \delta \leq 3$. 
First let us make a couple of definitions.  
\begin{align}
\textnormal{Eul}(\delta, r, s, n_1, n_2, 0) & := (d-2d^2 + d^3) N(A_1^{\delta-1} A_1, r,s,n_1+1, n_2, 0)  \nonumber \\ 
                                            & +  (3-6d + 3d^2) N(A_1^{\delta-1} A_1, r,s,n_1, n_2+1, 0) \nonumber \\ 
\textnormal{Eul}(\delta, r, s, n_1, n_2, 1) & := (d^2-d)N(A_1^{\delta-1} A_1, r,s,n_1+2, n_2, 0) \nonumber \\ 
                                            & + (3d^2-4d+1)N(A_1^{\delta-1} A_1, r,s,n_1+1, n_2+1, 0) \nonumber \\ 
                                            & + (3d-3) N(A_1^{\delta-1} A_1, r,s,n_1, n_2+2, 0), \nonumber \\ 
\textnormal{Eul}(\delta, r, s, n_1, n_2, 2) & := d N(A_1^{\delta-1} A_1, r,s,n_1+3, n_2, 0) \nonumber \\ 
                                            & + (2d-1)N(A_1^{\delta-1} A_1, r,s,n_1+2, n_2+1, 0) \nonumber \\ 
                                            & + (3d-2)N(A_1^{\delta-1} A_1, r,s,n_1+1, n_2+2, 0) \nonumber \\
                                            & +  N(A_1^{\delta-1} A_1, r,s,n_1, n_2+3, 0) \nonumber \\ 
\textnormal{Eul}(\delta, r, s, n_1, n_2, 3) & := N(A_1^{\delta-1} A_1, r,s,n_1+3, n_2+1, 0)\nonumber \\ 
                                            & + N(A_1^{\delta-1} A_1, r,s,n_1+2, n_2+2, 0) \nonumber \\ 
                                            & + N(A_1^{\delta-1} A_1, r,s,n_1+1, n_2+3, 0) \nonumber \\
\textnormal{Eul}(\delta, r, s, n_1, n_2, n_3) &=  0 \qquad \textnormal{if} ~~n_3 >3. \label{eul_num_na1}
\end{align}
\noindent We also define 
\begin{align}
\textnormal{B}(\delta, r, s, n_1, n_2, n_3) &:= \binom{\delta}{1}B_1 + \binom{\delta}{2}B_2 + \binom{\delta}{3}B_3, \qquad \textnormal{where} \nonumber \\
B_1  &:= \Big(N(A_1^{\delta-1}A_1, r, s, n_1, n_2+1, n_3)  
+ dN(A_1^{\delta-1}A_1, r, s, n_1, n_2, n_3+1) \nonumber \\ 
& \qquad  \qquad + 3 N(A_1^{\delta-1}\PP A_2, r, s, n_1, n_2, n_3, 0) \Big) \nonumber \\ 
B_2 &:= 4 \Big( N(A_1^{\delta-2}\PP A_3, r, s, n_1, n_2, n_3, 0) \Big) \nonumber \\ 
B_3 &:= \frac{18}{3}\Big( N(A_1^{\delta-3}\PP D_4, r, s, n_1, n_2, n_3, 0) \Big). \label{bd_na1}
\end{align}

We are now ready to state the formula for $N(A_1^{\delta}A_1,r, s, n_1, n_2, n_3)$.

\begin{thm}
\label{na1_delta}
Let $\textnormal{Eul}(\delta, r, s, n_1, n_2, n_3)$ and $\textnormal{B}(\delta, r, s, n_1, n_2, n_3)$ 
be defined as in \cref{eul_num_na1} and \cref{bd_na1} respectively.  If $1 \leq \delta \leq 3$, then 
\begin{align*}
N(A_1^{\delta}A_1,r, s, n_1, n_2, n_3) & = \textnormal{Eul}(\delta, r, s, n_1, n_2, n_3)-\textnormal{B}(\delta, r, s, n_1, n_2, n_3),   
\end{align*}
provided $d\geq 2 \delta +1$.
\end{thm}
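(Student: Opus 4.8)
The plan is to compute the class $[\overline{A_1^\delta \circ A_1}]$ inside $\mathcal{S}_{\mathcal{D}_{\delta+1}}$ by induction on $\delta$, following the excision/Euler-class strategy of Zinger and Basu--Mukherjee. The starting point is that $\overline{A_1^{\delta-1} \circ A_1}$ is (generically) smooth of the expected dimension, and on it we have a natural bundle map: at a point $([f],[\eta],q_1,\ldots,q_{\delta-1},q_\delta)$ lying in $A_1^{\delta-1}\circ \overline{A_1}$ we look at $\nabla^2 f|_{q_\delta}$, which is a section of $\operatorname{Sym}^2 W_{\mathcal{D}}^*$ (twisted appropriately by the line $\gamma_{\mathcal{D}}$) over the space $A_1^{\delta-1}\circ \overline{A_1}$ after pulling back via the $(\delta+1)$-st point. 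The locus $A_1^\delta \circ A_1$ is precisely where this section is \emph{nondegenerate} as a quadratic form with the node already imposed; equivalently, once we are on the divisor cut out by $\det \nabla^2 f|_{q_\delta}$ (wait — we are \emph{already} on $\overline{A_1}$ via the evaluation condition), the relevant thing is to compute the top Chern class of the rank-$3$ bundle $\operatorname{Sym}^2 W_{\mathcal{D}}^* \otimes \gamma_{\mathcal{D}}^*$ restricted to $\overline{A_1^{\delta-1}\circ A_1}$, intersected with the point/line constraints, and then subtract the contribution of the boundary $\overline{A_1^{\delta-1}\circ A_1}\setminus (A_1^{\delta-1}\circ A_1)$ where the section degenerates in a worse way.

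The key steps, in order: (1) Identify the bundle $V_{\delta}$ over $\overline{A_1^{\delta-1}\circ A_1}$ whose zero locus (of a generic section induced by $f \mapsto \nabla^2 f|_{q_\delta}$, after quotienting out the node direction) computes $\overline{A_1^{\delta}\circ A_1}$; compute $c_{\mathrm{top}}(V_\delta)$ in terms of $a$, $\lambda$, $H$ and the Chern classes $c_1(W), c_2(W)$ of \cref{ciW} and $c(\operatorname{Sym}^d\gamma_{3,4}^*)$ of \cref{si}. Cap with $\mathcal{H}_L^r \cap \mathcal{H}_p^s$ and push down; this produces the term $\textnormal{Eul}(\delta,r,s,n_1,n_2,n_3)$ — one checks that the polynomial coefficients $d-2d^2+d^3$, $3-6d+3d^2$, etc., in \cref{eul_num_na1} are exactly the expansion of $c_{\mathrm{top}}$ written in the monomial basis of $H^*$, using the relation $\lambda_W^2 = -(3H-a)\lambda_W - (a^2 - 2aH + 3H^2)$ from \cref{lm_reduce} to eliminate high powers of $\lambda_W$. (2) Analyze the boundary locus $\overline{A_1^{\delta-1}\circ A_1}\setminus (A_1^{\delta-1}\circ A_1)$, which decomposes by how the configuration degenerates: either the $(\delta+1)$-st point collides with one of the first $\delta$ points (giving, up to multiplicity, the strata $A_1^{\delta-1}\circ \PP A_2$, $A_1^{\delta-2}\circ \PP A_3$, $A_1^{\delta-3}\circ \PP D_4$ by a local analysis of how two nodes come together into a cusp, tacnode, or triple point), or the node at the last point becomes a worse singularity while staying away from the others (contributing the $N(A_1^{\delta-1}A_1,\ldots,n_2+1,\ldots)$ and $dN(A_1^{\delta-1}A_1,\ldots,n_3+1)$ terms). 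Computing the multiplicities $\binom{\delta}{1}$, $\binom{\delta}{2}$, $\binom{\delta}{3}$ and the coefficients $1$, $4$, $6$ (the $\tfrac{18}{3}$ in \cref{bd_na1}) is the combinatorial heart of the proof and rests on the local deformation-theoretic computation of the contribution of each degenerate stratum to the Euler class, exactly in the spirit of \cite{R.M}, \cite{BM13_2pt_published}, \cite{BM8}. (3) Subtract: $N(A_1^\delta A_1) = \textnormal{Eul} - \textnormal{B}$, and note the hypothesis $d \geq 2\delta+1$ guarantees the relevant linear systems are base-point free / the strata $\overline{A_1^{\delta-1}\circ A_1}$ are irreducible of expected dimension, so that all these intersection numbers are enumerative and the excess-intersection bookkeeping is exhaustive.

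The hard part will be step (2): establishing that the listed boundary strata are the \emph{only} ones that contribute and computing each multiplicity correctly. This requires a careful local model near a point where two nodes of a planar curve come together — one must show that the limit, recorded with a direction in $\PP W_{\mathcal{D}}$, lands in $\PP A_2$, $\PP A_3$ or $\PP D_4$ with the asserted degree and that the section of $V_\delta$ vanishes there to the order producing the coefficient $4$ (resp. $6$); the $\PP$-decorated spaces are introduced precisely so that these limits are transverse. One also has to rule out additional contributions from loci where $q_\delta$ is a node but the Hessian kernel direction is tangent to something degenerate — these are absorbed because $\overline{A_1^{\delta-1}\circ A_1}$ already sits inside $\mathcal{S}_{\mathcal{D}_\delta}$ with the first $\delta-1$ nodes at \emph{distinct} generic points, and the $d\geq 2\delta+1$ bound keeps everything separated. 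Everything else — the Chern-class expansions in step (1), the ring computations via \cref{ring_str} — is routine and is exactly what the accompanying Mathematica program carries out.
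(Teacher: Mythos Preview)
Your setup has a fundamental mismatch with what the formula actually computes. You propose to work on $\overline{A_1^{\delta-1}\circ A_1}$ (only $\delta$ marked points) and use the Hessian $\nabla^2 f|_{q_\delta}$ as a section of $\operatorname{Sym}^2 W_{\mathcal{D}}^*\otimes\gamma_{\mathcal{D}}^*$; but the vanishing of the full Hessian detects when an existing node degenerates to something of type $\overline{D}_4$, not when a \emph{new} node appears at a new point. Your own parenthetical ``(wait --- we are already on $\overline{A}_1$\ldots)'' is the right alarm. The paper instead works on $\overline{A_1^\delta\circ\mathcal{S}_{\mathcal{D}}}$ --- $\delta$ ordered nodes plus one \emph{free} point $q_{\delta+1}$ on the plane --- and uses the evaluation-plus-gradient sections
\[
\Psi_{A_0}([f],\ldots,q_{\delta+1})=f(q_{\delta+1})\in\mathcal{L}_{A_0}=\gamma_{\mathcal{D}}^*\otimes\gamma_{\mathbb{P}^3}^{*d},\qquad
\Psi_{A_1}([f],\ldots,q_{\delta+1})=\nabla f|_{q_{\delta+1}}\in\mathcal{V}_{A_1}=\gamma_{\mathcal{D}}^*\otimes W^*\otimes\gamma_{\mathbb{P}^3}^{*d}.
\]
The term $\textnormal{Eul}(\delta,\ldots)$ is $\langle e(\mathcal{L}_{A_0})e(\mathcal{V}_{A_1}),\,[\overline{A_1^\delta\circ\mathcal{S}_{\mathcal{D}}}]\cap[\mu]\rangle$, and the coefficients $d-2d^2+d^3$, $3-6d+3d^2$, etc.\ come from expanding $(\lambda+dH)\bigl((\lambda+dH)^2-c_1(W)(\lambda+dH)+c_2(W)\bigr)$, not from any $\operatorname{Sym}^2$. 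The bound $d\ge 2\delta+1$ is used to prove transversality of $\Psi_{A_0},\Psi_{A_1}$ (via explicit test polynomials $\rho_{00},\rho_{10},\rho_{01}$), not merely irreducibility.

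Because the starting space is off, your boundary stratification is shifted by one throughout. The first diagonal $q_i=q_{\delta+1}$ (two points collide: one existing node and the free point) gives $\mathcal{B}(q_i,q_{\delta+1})\approx\overline{A_1^{\delta-1}\circ A_1}$, on which both sections vanish identically; its contribution is $\langle e(\mathcal{L}_{A_0}),\,[\overline{A_1^{\delta-1}\circ A_1}]\cap[\mu]\rangle + 3N(A_1^{\delta-1}\mathcal{P}A_2,\ldots)$, and this is exactly $B_1$. In particular the $(n_2{+}1)$ and $(n_3{+}1)$ terms you attribute to ``the node becoming worse while staying away'' actually arise from $e(\mathcal{L}_{A_0})=\lambda+dH$ on this diagonal. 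When \emph{two} nodes and the free point coincide (three points) one lands in $\overline{A_1^{\delta-2}\circ A_3}$ with multiplicity $4$ (giving $B_2$); when \emph{three} nodes and the free point coincide (four points) one lands in $\overline{A_1^{\delta-3}\circ A_5}\cup\overline{A_1^{\delta-3}\circ D_4}$, and each $D_4$-point contributes $18$ (giving $B_3$). So the singularities $\PP A_2,\PP A_3,\PP D_4$ do appear, but one level of collision later than you state. Once you replace the Hessian by $(\Psi_{A_0},\Psi_{A_1})$ on the correct space, the local-model and multiplicity computations you allude to are indeed the substance of the argument.
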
 

We now state the remaining formulas. 


\begin{thm}
\label{npa1}
If $0 \leq \delta \leq 2$, then 
\begin{align*}
N(A_1^{\delta} \mathcal{P} A_1, r, s, n_1, n_2, n_3,  0) & = 2 N(A_1^{\delta} A_1, r, s, n_1, n_2, n_3), \\ 
N(A_1^{\delta} \mathcal{P} A_1, r, s, n_1, n_2, n_3,  1) & = N(A_1^{\delta} A_1, r, s, n_1, n_2+1, n_3)\\ 
                                                         & +(d-6)N(A_1^{\delta} A_1, r, s, n_1, n_2, n_3+1) \\ 
                                                         & + 2 N(A_1^{\delta} A_1, r, s, n_1+1, n_2, n_3) \\
                                                         &-2\binom{\delta}{2} N(A_1^{\delta-2} \PP D_4, r, s, n_1, n_2, n_3, 0),  
\end{align*}
provided $d \geq 2 \delta +2$. 
\end{thm}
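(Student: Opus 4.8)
\emph{Proof proposal.} I would treat the two formulas separately, but both rest on the fact that $\mathbb{P}W_{\mathcal{D}}\to\mathcal{S}_{\mathcal{D}}$ is a $\mathbb{P}^{1}$-bundle, so that $\overline{A_1^{\delta}\circ\widehat{A_1}}$ is precisely the $\mathbb{P}^{1}$-bundle $\pi^{-1}(\overline{A_1^{\delta}\circ A_1})$, and on comparing the cycle $[\overline{A_1^{\delta}\circ\mathcal{P}A_1}]$ with $[\overline{A_1^{\delta}\circ\widehat{A_1}}]$.

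\emph{The case $\theta=0$.} Here the integrand $a^{n_1}\lambda^{n_2}\pi_{\delta+1}^{*}H^{n_3}$ (with $\lambda_{W}^{0}=1$) and the classes $\mathcal{H}_{L},\mathcal{H}_{p}$ are all pulled back along $\pi\colon\overline{A_1^{\delta}\circ\mathcal{P}A_1}\to\overline{A_1^{\delta}\circ A_1}$. Since the Hessian of a curve at a node is nondegenerate, it vanishes on exactly the two branch directions, so this $\pi$ is generically two-to-one; and the hypothesis $d\ge 2\delta+2$ guarantees that no component of $\overline{A_1^{\delta}\circ\mathcal{P}A_1}$ is crushed into a smaller stratum, whence $\pi_{*}[\overline{A_1^{\delta}\circ\mathcal{P}A_1}]=2\,[\overline{A_1^{\delta}\circ A_1}]$. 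The projection formula then gives $N(A_1^{\delta}\mathcal{P}A_1,r,s,n_1,n_2,n_3,0)=2\,N(A_1^{\delta}A_1,r,s,n_1,n_2,n_3)$.

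\emph{The case $\theta=1$.} Now $\lambda_{W}$ is not pulled back, so I would realize $\mathcal{P}A_1$ as a zero locus. Over $\overline{A_1^{\delta}\circ\widehat{A_1}}$, the assignment sending $([f],[\eta],q_1,\dots,q_\delta,l_{q_{\delta+1}})$ to $\nabla^{2}f|_{q_{\delta+1}}(v,v)$ for $v\in l_{q_{\delta+1}}$ is a section $\Psi$ of the line bundle $\mathbb{L}:=(\gamma_{W}^{*})^{\otimes 2}\otimes\pi_{\delta+1}^{*}\mathcal{O}_{\mathbb{P}^{3}}(d)\otimes\gamma_{\mathcal{D}}^{*}$, so that $e(\mathbb{L})=2\lambda_{W}+dH+\lambda$; over the dense stratum $A_1^{\delta}\circ A_1$ this section is transverse to the zero section (using $d\ge 2\delta+2$) with zero locus exactly $A_1^{\delta}\circ\mathcal{P}A_1$. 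Hence $e(\mathbb{L})\cap[\overline{A_1^{\delta}\circ\widehat{A_1}}]=[\overline{A_1^{\delta}\circ\mathcal{P}A_1}]+\mathcal{E}$, where $\mathcal{E}$ collects the contributions of the components of the zero scheme $Z(\Psi)$ lying over the boundary of $\overline{A_1^{\delta}\circ A_1}$. Capping the left side with $a^{n_1}\lambda^{n_2}\pi_{\delta+1}^{*}H^{n_3}\lambda_{W}$ and with $\mathcal{H}_{L}^{r}\mathcal{H}_{p}^{s}$, and pushing forward along the $\mathbb{P}^{1}$-bundle using $\pi_{*}1=0$, $\pi_{*}\lambda_{W}=1$ and $\pi_{*}\lambda_{W}^{2}=-c_1(W)=a-3H$ (the relation recorded just below \cref{lm_reduce}), I find $\pi_{*}\big(\lambda_{W}\,e(\mathbb{L})\big)=\lambda+(d-6)H+2a$; pairing this against $a^{n_1}\lambda^{n_2}H^{n_3}$ produces exactly the first three summands $N(A_1^{\delta}A_1,r,s,n_1,n_2+1,n_3)+(d-6)N(A_1^{\delta}A_1,r,s,n_1,n_2,n_3+1)+2\,N(A_1^{\delta}A_1,r,s,n_1+1,n_2,n_3)$ of the asserted formula.

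\emph{The excess term, and the main obstacle.} It remains to compute $\langle a^{n_1}\lambda^{n_2}\pi_{\delta+1}^{*}H^{n_3}\lambda_{W},\,\mathcal{E}\rangle$. The section $\Psi$ vanishes along an entire $\mathbb{P}^{1}$-fibre exactly where the $2$-jet of $f$ at $q_{\delta+1}$ is zero, i.e.\ where $q_{\delta+1}$ is a point of multiplicity at least three; inside $\overline{A_1^{\delta}\circ A_1}$ the only locus of the right dimension where this occurs is the divisor on which $q_{\delta+1}$ has collided with two of the nodal points $q_i,q_j$ to an ordinary triple point, the other $\delta-2$ points remaining nodes. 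There are $\binom{\delta}{2}$ such divisors, so $\mathcal{E}=0$ for $\delta\le 1$, and the bound $d\ge 2\delta+2$ rules out any other collision pattern producing an excess component. A local analysis at such a divisor — deforming the triple point into three nearby nodes, recording the order of vanishing of $\Psi$ together with the $3!$ labellings of the three colliding points, and matching this against the degree $3$ of $\pi\colon\mathcal{P}D_4\to D_4$ and the fact that the surviving direction $l_{q_{\delta+1}}$ lands in $\mathcal{P}D_4$ — should identify this contribution with $2\binom{\delta}{2}N(A_1^{\delta-2}\mathcal{P}D_4,r,s,n_1,n_2,n_3,0)$, the minus sign coming from $\mathcal{E}$ being subtracted. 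This residual (excess) intersection computation — the ``degenerate contribution to the Euler class'' in the spirit of Zinger and of Basu and the second author — is the step I expect to be the main obstacle; the remaining technical point is the transversality and dimension bookkeeping that legitimize the Euler-class identity, which is exactly what the hypothesis $d\ge 2\delta+2$ is there to supply.
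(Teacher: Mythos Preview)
Your approach is correct and matches the paper's: realize $\overline{A_1^{\delta}\circ\mathcal{P}A_1}$ as the zero locus of the section $\Psi_{\mathcal{P}A_1}\colon([f],[\eta],\ldots,l_{q_{\delta+1}})\mapsto\nabla^2f|_{q_{\delta+1}}(v,v)$ of $\gamma_{\mathcal{D}}^{*}\otimes\gamma_W^{*2}\otimes\gamma_{\mathbb{P}^3}^{*d}$ on $\overline{A_1^{\delta}\circ\widehat{A}_1}$, push $\lambda_W\cdot(\lambda+2\lambda_W+dH)$ down the $\mathbb{P}^1$-fibres to get $\lambda+(d-6)H+2a$, and subtract the degenerate contribution where three nodes collide to a triple point. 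The paper fills in the two details you leave open: for $\delta\ge 1$ it also treats the one-point-collision stratum $\mathcal{B}(q_i,l_{q_{\delta+1}})\approx\overline{A_1^{\delta-1}\circ\hat{A}_3}$ (which your ``full-fibre'' criterion silently discards) and checks directly that $\Psi_{\mathcal{P}A_1}$ does not vanish on its generic, finite intersection with $\mu$; and for the $\widehat{D}_4$ locus it computes the multiplicity as $6$ per point by reusing the six-branch parametrization \cref{d4_three_nodes_soln} from the proof of \Cref{na1_delta} (each branch has $f_{t_{20}}$ vanishing to first order in $u$), after which $6\,N(A_1^{\delta-2}\widehat{D}_4,\ldots,1)=6\,N(A_1^{\delta-2}D_4,\ldots)=2\,N(A_1^{\delta-2}\mathcal{P}D_4,\ldots,0)$ yields exactly your anticipated coefficient.
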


\begin{rem}
To compute $N(A_1^{\delta} \mathcal{P} A_1, r, s, n_1, n_2, n_3,  \theta)$ when $\theta \geq 2$, we use \cref{lm_reduce}.
\end{rem}

\begin{thm}
\label{npa2}
Let $0 \leq \delta \leq 2$ and $\theta$ a non negative integer with the following property:  
if $\delta$ is either $0$ or $1$, then $\theta$ can be anything, but if $\delta=2$, then $\theta=0$.  
Then,
\begin{align*}
N(A_1^{\delta} \mathcal{P} A_2, r, s, n_1, n_2, n_3,  \theta) & = N(A_1^{\delta} \mathcal{P} A_1, r, s, n_1+1, n_2, n_3,  \theta) \\ 
                                                              &+ N(A_1^{\delta} \mathcal{P} A_1, r, s, n_1, n_2+1, n_3,  \theta) \\ 
                                                              &+ (d-3)N(A_1^{\delta} \mathcal{P} A_1, r, s, n_1, n_2, n_3+1,  \theta) \\ 
                                                              &-2\binom{\delta}{1} N(A_1^{\delta-1} \mathcal{P} A_3, r, s, n_1, n_2, n_3,  \theta) \\ 
                                                              &-3 \binom{\delta}{1} N(A_1^{\delta-1} \widehat{D}_4, r, s, n_1, n_2, n_3,  \theta) \\ 
                                                              &-4\binom{\delta}{2} N(A_1^{\delta-2} \mathcal{P} D_4, r, s, n_1, n_2, n_3,  \theta), 
\end{align*}
provided $d \geq 2 \delta +2$.
\end{thm}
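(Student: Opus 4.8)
\textbf{Proof proposal for \Cref{npa2}.}

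The plan is to compute $N(A_1^{\delta} \mathcal{P} A_2, \ldots)$ as an intersection number on the fiber bundle $\mathcal{S}_{\mathcal{D}_{\delta}}\times_{\mathcal{D}} \mathbb{P} W_{\mathcal{D}}$ by realizing $\overline{A_1^{\delta} \circ \mathcal{P} A_2}$ inside $\overline{A_1^{\delta} \circ \mathcal{P} A_1}$ as the zero locus of a section of a line bundle. On the space $\mathcal{P} A_1$, a curve has a node at $q$ with a marked direction $l_q$ in the kernel of the quadratic form $\nabla^2 f|_q$ restricted to $l_q$; cutting down further by requiring the cubic term $\nabla^3 f|_q(v,v,v)$ to vanish along $v \in l_q$ is exactly the condition that upgrades the singularity from $A_1$ (with a distinguished tangent) to $A_2$ (this is the standard normal-form bookkeeping: once $y^2$ is killed in the chosen direction, the next obstruction to a cusp is the cubic coefficient). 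So first I would identify the relevant line bundle — a tensor of $\gamma_W^*$ (contributing $\lambda_W$), of $\gamma_{\mathcal{D}}^*$ (contributing $\lambda$), and of $\gamma_{\widehat{\mathbb{P}}^3}^*$ and $\gamma_{\mathbb{P}^3}^*$ pulled back appropriately (contributing powers of $a$, $H$, and the degree factor $d$) — whose section is the third directional derivative, and write down its Euler class. The three terms $N(\mathcal{P} A_1, r,s,n_1+1,n_2,n_3,\theta)$, $N(\mathcal{P} A_1,r,s,n_1,n_2+1,n_3,\theta)$, and $(d-3)N(\mathcal{P} A_1,r,s,n_1,n_2,n_3+1,\theta)$ should be precisely the expansion of $[\overline{A_1^\delta \circ \mathcal{P} A_1}]$ capped with $c_1$ of that line bundle, analogous to the $\mathcal{H}_L = \lambda + da$ computation and to the structure already visible in \Cref{npa1}.

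The substantive part is the correction terms. The naive zero locus of the cubic-derivative section is not $\overline{A_1^\delta \circ \mathcal{P} A_2}$ on the nose: the section vanishes with excess (or the closure $\overline{A_1^\delta \circ \mathcal{P} A_1}$ contains loci where the section degenerates) along strata where either (i) the base point $q$ is itself a worse singularity whose closure meets $\mathcal{P} A_1$ — giving the $\mathcal{P} A_3$ term, since a tacnode limits onto a cusp-with-direction configuration, and the $\widehat{D}_4$ term, since an ordinary triple point's closure also contributes a direction-carrying degeneration — or (ii) one of the $\delta$ auxiliary nodal points collides with $q$, producing a $D_4$-type coincidence and hence the $\binom{\delta}{2}$ term $N(A_1^{\delta-2}\mathcal{P} D_4, \ldots)$. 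So the key steps in order are: (1) set up the section and its line bundle, verify transversality away from the bad strata; (2) enumerate the boundary/excess strata of $\overline{A_1^\delta \circ \mathcal{P} A_1} \cap \{\text{section}=0\}$ — this is where I expect to work hardest; (3) compute the multiplicity with which the section vanishes along each bad stratum (the coefficients $2$, $3$, $4$ in front of the binomials), by a local analysis of the defining equations in suitable coordinates — this is the standard "degenerate contribution to the Euler class" computation in the style of Zinger \cite{Zin} and \cite{R.M}, \cite{BM13_2pt_published}, \cite{BM8} that the introduction advertises; (4) assemble everything and invoke the degree bound $d \geq 2\delta + 2$ to guarantee all the relevant spaces have the expected dimension and the closures are well-behaved (no further unlisted strata appear).

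I expect step (3) — pinning down the precise multiplicities of the excess-intersection strata — to be the main obstacle, exactly because it requires choosing explicit local coordinates on $\mathbb{P} W_{\mathcal{D}}$ near a tacnode, a triple point, and a point of collision, expanding $f$ in its Taylor series there, and carefully counting the order of vanishing of the cubic-derivative section transverse to each stratum. The constraint that $\theta = 0$ when $\delta = 2$ is a signal that for $\delta = 2$ the $\lambda_W$-twisted version introduces an extra degeneracy (presumably the section $\lambda_W$ itself interacts badly with the $D_4$ collision stratum), so part of step (2)–(3) is checking exactly when the clean recursion survives twisting by $\lambda_W^\theta$; for $\delta \le 1$ it does for all $\theta$, for $\delta = 2$ only for $\theta = 0$. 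Once the multiplicities are in hand, the rest is the bookkeeping already systematized in \Cref{lm_reduce}, \Cref{deg_to_one_up_to_down} and \Cref{deg_to_one_up_to_down_hat}, together with the closed-form computability of $N(r,s,n_1,n_2)$ via the ring $\mathcal{R}$ of \Cref{na1_again}.
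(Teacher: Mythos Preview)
Your overall architecture is right --- realize $\overline{A_1^\delta\circ\mathcal{P}A_2}$ as the zero locus of a section of a line bundle over $\overline{A_1^\delta\circ\mathcal{P}A_1}$, compute the Euler class, and subtract the boundary contributions --- but the section you propose is the wrong one, and this would derail everything downstream.

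On $\mathcal{P}A_1$ you have $f_{20}:=\nabla^2f|_q(v,v)=0$ with the Hessian still nondegenerate. The condition that promotes this to a cusp with $l_q$ in the kernel is \emph{not} $\nabla^3f|_q(v,v,v)=0$; it is that the mixed second derivative $f_{11}:=\nabla^2f|_q(v,w)$ vanish (here $w$ generates $W/\gamma_W$). Indeed, in the $(v,w)$ basis the Hessian reads $\begin{pmatrix}0&f_{11}\\ f_{11}&f_{02}\end{pmatrix}$, so it degenerates exactly when $f_{11}=0$. The section $\nabla^3f|_q(v,v,v)$ is the one that cuts $\mathcal{P}A_3$ out of $\overline{\mathcal{P}A}_2$ (this is \Cref{npa3}), not $\mathcal{P}A_2$ out of $\overline{\mathcal{P}A}_1$. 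Correspondingly, the line bundle your section lives in is $\gamma_{\mathcal{D}}^*\otimes\gamma_W^{*3}\otimes\gamma_{\mathbb{P}^3}^{*d}$, whose first Chern class is $\lambda+3\lambda_W+dH$; this does not produce the terms $a$, $\lambda$, $(d-3)H$ visible in the statement. The correct bundle is $\mathbb{L}_{\mathcal{P}A_2}:=\gamma_{\mathcal{D}}^*\otimes\gamma_W^*\otimes(W/\gamma_W)^*\otimes\gamma_{\mathbb{P}^3}^{*d}$, whose Chern class, using $c_1(W)=3H-a$ from \cref{ciW}, is $\lambda+\lambda_W+(a-3H-\lambda_W)+dH=\lambda+a+(d-3)H$, exactly matching the first three terms of the formula.

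Your reading of the correction terms is also off. The $\binom{\delta}{1}$ factors in front of $N(A_1^{\delta-1}\mathcal{P}A_3)$ and $N(A_1^{\delta-1}\widehat{D}_4)$ are a signal that these come from \emph{one} of the $\delta$ free nodes colliding with the marked point $q_{\delta+1}$ (two nodes colliding produce a tacnode or, if the Hessian dies entirely, a $D_4$), not from the marked point independently becoming worse. The $\binom{\delta}{2}$ term $N(A_1^{\delta-2}\mathcal{P}D_4)$ then comes from \emph{two} of the $\delta$ free nodes colliding with $q_{\delta+1}$. So the boundary stratification you need to analyze is indexed by how many of the $\delta$ nodal markings fall onto $q_{\delta+1}$, and within each such diagonal you must identify the closure (this is where $\mathcal{P}A_3$, $\widehat{D}_4$, $\mathcal{P}D_4$ appear) and compute the order of vanishing of $f_{11}$ along it; that is where the multiplicities $2$, $3$, $4$ arise. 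Once you use the correct section $\nabla^2f(v,w)$ and this collision picture, the local computations proceed as in \cite{BM13_2pt_published}. The $\theta=0$ restriction for $\delta=2$ enters because one of the two-collision strata sits inside a $\widehat{D}_5$-locus which, being a $\mathbb{P}^1$-bundle over $D_5$, only fails to meet $\mu$ for dimension reasons when $\mu$ is pulled back from downstairs, i.e.\ when $\theta=0$.
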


\begin{rem}
If $\delta =2$ and $\theta>0$, then the formula given by Theorem \ref{npa2} is not valid; there is a further correction term 
(the interested reader can refer to \cite{BM8} 
to see what the extra correction term is). However, to compute $N(A_1^{2}A_2, r, s, n_1, n_2, n_3)$ 
we only need to know what is $N(A_1^{2} \mathcal{P} A_2, r, s, n_1, n_2, n_3, 0)$ and hence for the purposes of this paper, this 
Theorem is sufficient. We would require $N(A_1^{2} \mathcal{P} A_2, r, s, n_1, n_2, n_3, \theta)$ for $\theta>0$ if we were computing 
any of the codimension five (or higher) numbers; in this paper we are computing numbers till codimension four. 
\end{rem}

\begin{thm}
\label{npa3}
If $0 \leq \delta \leq 1$, then 
\begin{align*}
N(A_1^{\delta} \mathcal{P} A_3, r, s, n_1, n_2, n_3,  \theta) &= N(A_1^{\delta} \mathcal{P} A_2, r, s, n_1, n_2+1, n_3,  \theta) \\ 
                                                              &+ 3 N(A_1^{\delta} \mathcal{P} A_2, r, s, n_1, n_2, n_3,  \theta+1) \\ 
                                                              &+ d N(A_1^{\delta} \mathcal{P} A_2, r, s, n_1, n_2, n_3+1,  \theta) \\ 
                                                              &-2\binom{\delta}{1}N(A_1^{\delta-1} \mathcal{P} A_4, r, s, n_1, n_2, n_3,  \theta),
\end{align*}
provided $d \geq 2 \delta +3$.
\end{thm}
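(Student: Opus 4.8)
The plan is to realize $\overline{A_1^{\delta}\circ\mathcal{P}A_3}$ as the zero locus of a section of an appropriate line bundle over $\overline{A_1^{\delta}\circ\mathcal{P}A_2}$, and then compute the Euler class contribution. Concretely, inside $\mathcal{P}A_2$ a point $([f],[\eta],l_q)$ carries a distinguished direction $v\in l_q$ spanning the kernel of $\nabla^2 f|_q$; the condition that the singularity is actually (at least) an $A_3$ rather than a generic $A_2$ is the vanishing of the next Taylor coefficient along $v$, i.e. the vanishing of the cubic term $\nabla^3 f|_q(v,v,v)$ (appropriately interpreted modulo the lower-order data, cf. the normal-form definitions in Section \ref{notation}). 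This defines a section $\Psi$ of the line bundle $L:=\gamma_W^{*\otimes 3}\otimes(\text{line bundle accounting for }f)$ over $\overline{A_1^{\delta}\circ\mathcal{P}A_2}$ — precisely the bundle whose first Chern class is $3\lambda_W + \lambda + (\text{contribution of }\pi_{\delta+1}^*TP^2_\eta\text{ factors})$, which after using $c_1(W)=3H-a$ produces the combination $\lambda_W\mapsto\theta+1$, $\lambda\mapsto n_2+1$, $a,H\mapsto$ shifts seen on the right-hand side.

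First I would set up the section $\Psi$ carefully and identify $L$, using the explicit identification of $W$ and its Chern classes from \cref{W_defn} and \cref{ciW}, together with the cohomology generators $\lambda$, $a$, $H$, $\lambda_W$. Next, by the standard argument (the zero locus of $\Psi$ is $\overline{A_1^{\delta}\circ\mathcal{P}A_3}$ together with excess/degenerate loci), the pairing $\langle e(L)\cdot(\text{classes}),[\overline{A_1^{\delta}\circ\mathcal{P}A_2}]\cap\mathcal{H}_L^r\cap\mathcal{H}_p^s\rangle$ equals $N(A_1^{\delta}\mathcal{P}A_3,\dots)$ plus the contributions of the loci where $\Psi$ vanishes for the ``wrong'' reason. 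Then I would enumerate those degenerate loci: the locus where the cusp degenerates further at the marked point (contributing the $\mathcal{P}A_4$ term up to multiplicity, which I expect to be $2$ from a transversality/local-model count) and the loci where one of the $\delta$ auxiliary nodes collides with the marked point — but under the hypothesis $d\geq 2\delta+3$ the dimension count should rule out all but the $A_4$ boundary, exactly as in the analogous computations of \cite{R.M}, \cite{BM13_2pt_published}, \cite{BM8}. Expressing $e(L)$ in the generators and pushing $\lambda_W$-powers down via \cref{lm_reduce} / \cref{deg_to_one_up_to_down_hat} then yields the three leading terms on the right-hand side, and the $A_4$-correction gives the last term with its binomial coefficient $\binom{\delta}{1}$.

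The main obstacle I anticipate is the precise computation of the multiplicity with which $\overline{A_1^{\delta}\circ\mathcal{P}A_4}$ (and any a priori possible lower-dimensional strata) occurs in the zero scheme of $\Psi$ — that is, verifying the local analytic picture near an $A_4$ point to confirm the coefficient $2$, and checking that no further correction terms survive. This is exactly the kind of degenerate-contribution-to-the-Euler-class computation that the topological method of Zinger \cite{Zin} (extended in \cite{R.M}, \cite{BM13_2pt_published}, \cite{BM8}) is designed to handle; I would carry it out by choosing explicit local coordinates in which $f$ is in $A_3$ normal form, writing $\Psi$ in those coordinates, and reading off the order of vanishing along the $A_4$ stratum, while the dimension bound $d\geq 2\delta+3$ does the bookkeeping that keeps the node-collision loci from contributing.
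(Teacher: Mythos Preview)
Your overall strategy is correct and matches the paper: define the section $\Psi_{\PP A_3}([f],[\eta],l_q)(f\otimes v^{\otimes 3}):=\nabla^3 f|_q(v,v,v)$ on $A_1^{\delta}\circ\overline{\PP A}_2$ with values in $\mathbb{L}_{\PP A_3}=\gamma_{\DD}^*\otimes\gamma_W^{*3}\otimes\gamma_{\mathbb{P}^3}^{*d}$, so that $e(\mathbb{L}_{\PP A_3})=\lambda+3\lambda_W+dH$ produces exactly the three leading terms. (No factor of $W$ itself appears in the bundle, so $c_1(W)$ is not needed here; the shifts are just $n_2\!+\!1$, $\theta\!+\!1$, $n_3\!+\!1$.)

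However, your identification of the degenerate loci is inverted, and this is a genuine gap. You attribute the $\PP A_4$ correction to ``the cusp degenerating further at the marked point'' with the $\delta$ nodes held fixed, and you claim the node-collision loci are ruled out by dimension. Both of these are wrong. Observe that the correction term in the statement is $N(A_1^{\delta-1}\PP A_4,\dots)$ with $\delta-1$ nodes and carries the factor $\binom{\delta}{1}$: this already tells you it must arise from a node colliding with the last marked point, not from a degeneration at the marked point alone. Concretely (cf.\ \cite[Lemma 6.3]{BM13_2pt_published}), the closure decomposes as
\[
\overline{\overline{A}_1^{\delta}\circ\PP A}_2=(\overline{A}_1^{\delta}\circ\PP A_2)\cup\overline{A}_1^{\delta}\circ(\overline{\PP A}_2-\PP A_2)\cup\overline{A}_1^{\delta-1}\circ(\Delta\overline{\PP A}_4\cup\Delta\overline{\widehat{D}}_5),
\]
so the boundary $\mathcal{B}$ has two pieces: the diagonal-free piece $\mathcal{B}_0=\overline{A}_1^{\delta}\circ\overline{\widehat{D}}_4$ (since $\overline{\PP A}_2=\PP A_2\cup\overline{\PP A}_3\cup\overline{\widehat{D}}_4$), and the collision piece $\mathcal{B}(q_i,l_{q_{\delta+1}})\approx\overline{A_1^{\delta-1}\circ\PP A}_4\cup\overline{A_1^{\delta-1}\circ\widehat{D}}_5$. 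On $\mathcal{B}_0\cap\mu$ and on $\overline{A_1^{\delta-1}\circ\widehat{D}}_5\cap\mu$ the third directional derivative $\nabla^3 f(v,v,v)$ is generically nonzero (for generic $\mu$), so $\Psi_{\PP A_3}$ does not vanish there and these pieces contribute nothing. It is precisely the node-collision component $\overline{A_1^{\delta-1}\circ\PP A}_4$ on which $\Psi_{\PP A_3}$ vanishes identically, and the local model there (cf.\ \cite[Corollary 6.13]{BM13_2pt_published}) gives multiplicity $2$. The hypothesis $d\geq 2\delta+3$ is used for transversality of $\Psi_{\PP A_3}$ on the open stratum, not to kill node collisions.
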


\begin{thm}
\label{npa4}
If $d \geq 4$,  then 
\begin{align*}
N(\mathcal{P} A_4, r, s, n_1, n_2, n_3,  \theta) &= 2N(\mathcal{P} A_3, r, s, n_1, n_2+1, n_3,  \theta) \\ 
                                                 &+2N(\mathcal{P} A_3, r, s, n_1, n_2, n_3,  \theta+1) \\ 
                                                 &+2N(\mathcal{P} A_3, r, s, n_1+1, n_2, n_3,  \theta)
                                                  &+(2d-6)N(\mathcal{P} A_3, r, s, n_1, n_2, n_3+1,  \theta)
\end{align*}
\end{thm}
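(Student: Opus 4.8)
The plan is to realize $\overline{\mathcal{P}A_4}$ as the zero locus of a section of a line bundle over a suitable compactification of $\overline{\mathcal{P}A_3}$, and then apply the Euler class / excess intersection machinery exactly as in the proofs of Theorems \ref{npa2} and \ref{npa3}. Concretely, a point $([f],[\eta],l_q)\in\overline{\mathcal{P}A_3}$ records a curve with (at least) a tacnode at $q$ together with the distinguished tangent direction $l_q$ along which $\nabla^2 f|_q$ degenerates; recall that on $\mathcal{P}A_3$ one has, after choosing $v\in l_q$ spanning the line, $\nabla^2 f|_q(v,\cdot)=0$ and the cubic term $\nabla^3 f|_q(v,v,v)=0$ as well (this is what cuts $\mathcal{P}A_3$ out inside $\mathcal{P}A_2$, cf. Theorem \ref{npa3}). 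The passage from $A_3$ to $A_4$ is governed by the vanishing of the next Taylor coefficient along $l_q$, i.e. the quartic term $\nabla^4 f|_q(v,v,v,v)$ (suitably interpreted modulo the lower-order data), which is a section of the line bundle $\gamma_W^{*\,4}\otimes(\text{twist by }\gamma_{\mathcal D}^*\text{ and }\gamma_{\widehat{\mathbb P}^3}^*)$ restricted to $\overline{\mathcal P A_3}$. First I would identify this bundle precisely: its first Chern class is $4\lambda_W$ plus a correction coming from $c_1(\gamma_{\mathcal D}^*)=\lambda$ and from the normal data of $W$, and the structure of the recursion in Theorems \ref{npa2}, \ref{npa3} shows the relevant twist contributes $2\,c_1(\gamma^*_{\mathbb P^3})=2H$ together with $2\lambda$ and $2a$ — matching the coefficients $2,2,2,(2d-6)$ in the claimed formula once one writes $2d-6 = 2d - 6$ and absorbs $2\cdot(\text{hyperplane})$ terms. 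The identity $\mathcal H_L=\lambda+da$, $\mathcal H_p=\lambda a$ from \eqref{HL_Hp_class} then lets one push everything down to $\mathcal D$.

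The key steps, in order, would be: (1) establish the local model — show that near a generic tacnode point the one extra condition defining $A_4$ inside $A_3$ is the transverse vanishing of an explicit section $\Psi_{A_4}$ of the line bundle just described, so that $[\overline{\mathcal P A_4}] = e(\mathcal L_{A_4})\cap[\overline{\mathcal P A_3}]$ up to boundary contributions; (2) compute $e(\mathcal L_{A_4})$ in terms of $\lambda_W, \lambda, a, H$ using the splitting principle and the already-recorded Chern classes $c_1(W), c_2(W)$ (cf. \eqref{ciW}) — this produces the four terms with coefficients $2$ (for the $\lambda_W\to\lambda$, $\lambda_W\to(\text{shift }n_2)$, $\lambda_W\to(\text{shift }n_1)$ pieces) and $2d-6$ (for the $H$-shift piece); (3) argue that for $d\ge 4$ the section $\Psi_{A_4}$ is transverse along $\overline{\mathcal P A_3}$ and, crucially, that its zero locus along the boundary $\overline{\mathcal P A_3}\setminus\mathcal P A_3$ contributes nothing — i.e. there is no excess/degenerate contribution, which is why (unlike Theorems \ref{npa2}, \ref{npa3}) there are no $\binom{\delta}{j}$ correction terms here (we are at $\delta=0$, and the strata more degenerate than $A_4$, such as $A_5$ or $D_k$ with $k\ge 5$, have codimension $\ge 5 > 4$, hence do not meet a generic configuration of $r$ lines and $s$ points with $r+2s$ as prescribed); (4) translate the Euler-class computation into the stated recursion by expanding $e(\mathcal L_{A_4})\,\pi^*(\text{point/line classes})$ and reading off coefficients, then invoking \eqref{lm_reduce} and \eqref{deg_to_one_up_to_down} implicitly through the definition of $N(\mathcal P A_4,\dots)$.

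The main obstacle I expect is step (3): proving that the boundary $\overline{\mathcal P A_3}\setminus\mathcal P A_3$ carries no contribution to the Euler class, equivalently that $\overline{\mathcal P A_3}$ is (for $d\ge4$, at the relevant dimension) either smooth along the locus where $\Psi_{A_4}$ vanishes or that the section already vanishes to first order there. In the companion papers \cite{BM8}, \cite{BM13_2pt_published} this is precisely where degenerate contributions to the Euler class appeared and had to be computed by the topological method attributed to Zinger \cite{Zin}; here the claim is that at total codimension four with no nodes present ($\delta=0$) the relevant boundary strata ($\overline{A_5}$, $\overline{D_5}$, the locus where the distinguished direction becomes non-reduced, etc.) all have excess dimension that is strictly negative after cutting with $\mathcal H_L^r\mathcal H_p^s$, so the naive Euler class is the honest answer. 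I would verify this by a dimension count on each boundary stratum of $\overline{\mathcal P A_3}$, using that $\operatorname{codim}A_k = k$ and $\operatorname{codim}D_k = k$, together with the hypothesis $d\ge 4$ to guarantee the linear system is ample enough that these strata are nonempty of exactly the expected dimension; this is the analogue of the bound $d\ge d_{\min}$ in Theorem \ref{main_thm} specialized to $c_{\mathfrak X}=4$, $\delta=0$.
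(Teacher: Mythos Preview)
Your overall strategy is right, but there are two linked errors that would make the argument fail as written.

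\textbf{The section and its target bundle are misidentified.} The passage from $\mathcal{P}A_3$ to $\mathcal{P}A_4$ is \emph{not} governed by the vanishing of $f_{40}=\nabla^4 f|_q(v,v,v,v)$ alone. In local coordinates with $v=\partial_x$, $w=\partial_y$ (so $f_{20}=f_{11}=f_{30}=0$ on $\mathcal{P}A_3$), the correct $A_4$-invariant after completing the square is
\[
A_4^f \;=\; f_{40}-\frac{3f_{21}^2}{f_{02}}.
\]
To get a globally defined section one clears the denominator and uses $\Psi_{\mathcal{P}A_4}:=f_{02}A_4^f=f_{02}f_{40}-3f_{21}^2$, which is a section of
\[
\mathbb{L}_{\mathcal{P}A_4}\;=\;\gamma_{\mathcal{D}}^{*\,2}\otimes\gamma_W^{*\,4}\otimes(W/\gamma_W)^{*\,2}\otimes\gamma_{\mathbb{P}^3}^{*\,2d}.
\]
Its Euler class is $2\lambda+4\lambda_W-2(c_1(W)+\lambda_W)+2dH=2\lambda+2\lambda_W+2a+(2d-6)H$, which is what produces the coefficients $2,2,2,(2d-6)$. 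Your attempt to reach these coefficients starting from ``$4\lambda_W$ plus corrections'' cannot succeed: the $(W/\gamma_W)^{*2}$ factor is essential both to reduce the $\lambda_W$-coefficient from $4$ to $2$ and to produce the $2a$ and $-6H$ terms. Your parenthetical ``suitably interpreted modulo lower-order data'' is the right instinct, but you then drop it when identifying the bundle.

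\textbf{The boundary analysis misses $\mathcal{P}D_4$.} You argue that all boundary strata of $\overline{\mathcal{P}A_3}$ beyond $\overline{\mathcal{P}A_4}$ have codimension $\geq 5$ and hence miss a generic $\mu$. This is false: by \eqref{pa3_closure} one has $\overline{\mathcal{P}A}_3=\mathcal{P}A_3\cup\overline{\mathcal{P}A}_4\cup\overline{\mathcal{P}D}_4$, and $\mathcal{P}D_4$ has codimension exactly $4$, so $\overline{\mathcal{P}D}_4\cap\mu$ is a nonempty finite set of points that cannot be dismissed by dimension. The reason there is no contribution is \emph{not} dimensional but pointwise: on $\mathcal{P}D_4$ one has $f_{02}=0$, so $\Psi_{\mathcal{P}A_4}=f_{02}f_{40}-3f_{21}^2=-3f_{21}^2$, and for generic $\mu$ the directional derivative $f_{21}$ does not vanish at those points, hence the section is nonzero there. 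This is precisely why the section had to be $f_{02}A_4^f$ rather than $A_4^f$ or $f_{40}$: the choice is engineered so that the extension over $\overline{\mathcal{P}D}_4$ is nonvanishing. Your dimension-count argument, by contrast, would also fail for the naive section $f_{40}$, which does \emph{not} have this property.
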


\begin{thm}
\label{npd4}
If $d \geq 3$,  then 
\begin{align*}
N(\mathcal{P} D_4, r, s, n_1, n_2, n_3,  \theta) &= N(\mathcal{P} A_3, r, s, n_1, n_2+1, n_3,  \theta) \\ 
                                                 &-2N(\mathcal{P} A_3, r, s, n_1, n_2, n_3,  \theta+1) \\ 
                                                 &+2N(\mathcal{P} A_3, r, s, n_1+1, n_2, n_3,  \theta) 
                                                 &+(d-6)N(\mathcal{P} A_3, r, s, n_1, n_2, n_3+1,  \theta)
\end{align*}
\end{thm}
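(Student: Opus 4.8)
The plan is to realize $\PP D_4$ as an appropriate closure inside the projectivized bundle $\mathbb{P}W_{\mathcal{D}}$ and to express the class $[\overline{\PP D_4}]$ in terms of the class $[\overline{\PP A_3}]$ via a degeneracy/transversality argument. Recall that on $\PP A_3$ we have a marked point $q$ where $f$ has a tacnode and a marked line $l_q$ along which $\nabla^2 f|_q(v,\cdot)=0$; the distinguished direction $l_q$ is the kernel of the Hessian. The locus $\PP D_4$ is cut out inside $\overline{\PP A_3}$ (away from more degenerate strata) by the vanishing of the cubic form $\nabla^3 f|_q(v,v,v)$ restricted to $v\in l_q$. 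More precisely, I would first set up the section
\begin{align*}
\Psi : \overline{\PP A_3} \longrightarrow \gamma_W^{*3} \otimes \mathcal{L},
\end{align*}
where $\mathcal{L}$ is the line bundle over $\mathcal{S}_{\mathcal{D}}$ whose fiber records the target of a third directional derivative of a degree-$d$ form (so that $c_1(\mathcal{L})$ is expressible in terms of $a$, $H$, $\lambda$ and $\lambda_W$), and show that $\PP D_4$ is (the open part of) the zero locus of $\Psi$ and that $\Psi$ vanishes transversally along $\PP D_4$. Then the Euler-class argument gives
\begin{align*}
[\overline{\PP D_4}] = e(\gamma_W^{*3}\otimes \mathcal{L}) \cap [\overline{\PP A_3}] + (\textnormal{boundary corrections}),
\end{align*}
and $e(\gamma_W^{*3}\otimes \mathcal{L}) = 3\lambda_W + c_1(\mathcal{L})$.

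Second, I would compute $c_1(\mathcal{L})$ explicitly. Differentiating a section of $\mathcal{O}(d)$ on $\mathbb{P}^2_\eta$ once in a direction $v$ loses one power of $\gamma_{\mathcal{D}}^{*}$-type twisting and picks up a factor related to $\gamma_W^*$ and the conormal data of the plane; iterating three times, together with the fact that the point $q$ lies on $\mathbb{P}^2_\eta$ (so $H$ and $a$ enter through the constraint $\eta(q)=0$ and through \eqref{HL_Hp_class}), one obtains a linear expression $c_1(\mathcal{L}) = \alpha a + \beta H + \gamma \lambda$ for explicit integer coefficients. Substituting into $e(\gamma_W^{*3}\otimes\mathcal{L})$ and pushing forward via $\pi$ (using the definitions of $\Num(\cdots)$ from the Intersection Numbers section, together with \eqref{lm_reduce} to reduce the powers of $\lambda_W$ that appear) will produce exactly the four terms on the right-hand side: the $N(\PP A_3,\ldots,n_2+1,\ldots,\theta)$ term from $\lambda$, the $-2N(\PP A_3,\ldots,\theta+1)$ term from the $\lambda_W$ contribution after applying the quadratic relation, the $+2N(\PP A_3,\ldots,n_1+1,\ldots)$ term from $a$, and the $(d-6)N(\PP A_3,\ldots,n_3+1,\ldots)$ term from $H$ together with the degree-$d$ scaling.

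Third, I would argue that the boundary corrections vanish. The closure $\overline{\PP A_3}$ meets the more degenerate strata ($\PP A_4$, $\PP A_5$, $\PP D_4$ with extra tangency, and the diagonal-type loci) in codimension at least one; the section $\Psi$ could a priori vanish with higher multiplicity or the zero locus could pick up components supported there. The hypothesis $d\geq 3$ is exactly what is needed to guarantee that $\overline{\PP A_3}$ has the expected dimension and that these bad loci do not contribute — either because they have too small a dimension to support a correction term against the given cohomology weights $a^{n_1}\lambda^{n_2}\pi^*H^{n_3}\lambda_W^\theta\cap \mathcal{H}_L^r\cap\mathcal{H}_p^s$, or because the relevant mobile classes $\mathcal{H}_L$, $\mathcal{H}_p$ can be chosen generically to miss them. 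This is the step I expect to be the main obstacle: one must carefully enumerate the strata in the boundary of $\overline{\PP A_3}$, check that $\Psi$ extends and that its transverse vanishing is not obstructed, and verify the dimension count that kills each potential extra contribution. I would handle it by the same local-analysis and multiplicity computations used for the analogous statement in \cite{BM8} (and, for the $\PP D_4$ case specifically, by noting that $D_4$ is the generic point of the locus where the cubic $\nabla^3 f|_q$ restricted to $\ker\nabla^2 f|_q$ vanishes, so no spurious multiplicity appears when $d\geq 3$).

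Finally, once the identity of classes is established, the stated formula follows immediately by capping with $a^{n_1}\lambda^{n_2}\pi^*H^{n_3}\lambda_W^\theta\cap\mathcal{H}_L^r\cap\mathcal{H}_p^s$ and reading off $\Num$-values; the restriction $\theta$ arbitrary is allowed here because for $\delta=0$ there are no node-collision correction terms of the type appearing in \Cref{npa2}, so no analogue of the $\theta>0$ caveat is needed.
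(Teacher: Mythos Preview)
Your proposed section is the wrong one: on $\overline{\PP A_3}$ the cubic derivative $\nabla^3 f|_q(v,v,v)$ along the distinguished direction $v\in l_q$ vanishes \emph{identically}. Indeed, by the definition of $\PP A_k$ for $k\ge 2$ the vector $v$ lies in the kernel of the Hessian, and for an $A_3$ singularity the local model is $y^2+x^4$, so the third $x$--derivative at the origin is zero; the paper records this fact explicitly (see the remark after \eqref{a1_PA1_hat_A3}). Thus your section $\Psi$ cuts out nothing inside $\overline{\PP A_3}$---it is the section used to cut $\overline{\PP A_3}$ out of $\overline{\PP A_2}$ in Theorem~\ref{npa3}, not the one that isolates $\PP D_4$.

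What geometrically distinguishes $D_4$ from $A_3$ is that the entire Hessian vanishes, not just its restriction to $l_q$. Hence the correct section on $\overline{\PP A_3}$ is the remaining piece of the Hessian, $\nabla^2 f|_q(w,w)$ with $w\in W/\gamma_W$, which lands in $\gamma_{\mathcal D}^*\otimes (W/\gamma_W)^{*2}\otimes\gamma_{\mathbb P^3}^{*d}$. Its Euler class is $\lambda+2c_1((W/\gamma_W)^*)+dH=\lambda+2a-2\lambda_W+(d-6)H$ (using $c_1(W)=3H-a$ from \eqref{ciW} and $c_1(\gamma_W^*)=\lambda_W$), which matches the four terms of the theorem on the nose---no appeal to the quadratic relation \eqref{lm_reduce} is needed. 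Your claimed bundle $\gamma_W^{*3}\otimes\mathcal L$ would have Euler class $3\lambda_W+c_1(\mathcal L)$; there is no way to turn the coefficient $3$ into $-2$ by ``applying the quadratic relation,'' and no $a$--term ever appears in $c_1(\mathcal L)=\lambda+dH$. The boundary analysis is then immediate: by \eqref{pa3_closure_again} the only other stratum is $\overline{\PP A_4}$, on which the Hessian still has rank~$1$, so $\nabla^2f|_q(w,w)\neq 0$ and there is no excess contribution.
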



We will  now prove these recursive formulas.

\section{Proof of the recursive formulas}
We are now ready to prove the formulas stated in section \ref{recursive_formulas}. 
We will use a topological method to compute the degenerate contribution to the Euler class. 
Our method is an extension of the method that originates in the paper by A.~Zinger (\cite{Zin}) 
and which is further pursued by S.~Basu and the second author in \cite{R.M}, \cite{BM13_2pt_published} 
and \cite{BM8}. \\ 
\hf \hf When there is no cause for confusion,  
we will sometimes abbreviate 
$N(A_1^{\delta}A_1,r, s, n_1, n_2, n_3)$ and 
\newline $N(A_1^{\delta}\mathcal{P}\mathfrak{X},r, s, n_1, n_2, n_3, \theta)$ 
as $N(A_1^{\delta}A_1)$ and 
$N(A_1^{\delta} \mathcal{P}\mathfrak{X})$ 
(for the sake of notational simplicity).

\subsection{Proof of \Cref{na1_again} and \ref{na1_delta}: computation of $N(A_1^{\delta}A_1)$ when $0\leq \delta \leq 3$}
\label{na1_delta_proof}
\verb+ +\\
\hf  \hf We will justify our formula for 
$N(A_1^{\delta} A_1,r, s, n_1, n_2, n_3)$, when $0 \leq \delta \leq 3$.
Recall that in \Cref{notation}, we have defined  
\begin{align*}
\A^{\delta}_1 \circ \mathcal{S}_{\DD} := \{ ([f], [\eta];  q_1, \ldots, q_{\delta}, q_{\delta+1}) & \in \DD \times (\mathbb{P}^3)^{\delta +1}:  
\eta (q_i) = 0, \qquad \forall i = 1 ~~\textnormal{to} ~~ \delta+1, \\
&\textnormal{$f$ has a singularity of type $\A_1$ at $q_1, \ldots, q_{\delta}$}, \\ 
                   & \textnormal{$q_1, \ldots, q_{\delta+1}$ all distinct}\}. 
\end{align*}
Let $\mu$ be a generic cycle, representing 
the class 
\begin{align*}
[\mu] = \mathcal{H}_L^r \cdot \mathcal{H}_p^s \cdot a^{n_1} \lambda^{n_2} (\pi_{\delta+1}^*H)^{n_3}. 
\end{align*}
Here $\pi_i$ denotes the projection onto the $i^{\textnormal{th}}$ point. 
We will often omit writing down $\pi_{\delta+1}^*$, if there is no cause for confusion. 
We now consider sections of the following two bundles 
that are induced by the evaluation map and the vertical derivative at the last point, namely: 
\begin{align*}
\Psi_{\A_0}: A_1^{\delta} \circ \mathcal{S}_{\DD} \lra \mathcal{L}_{\A_0} & := 
\gamma_{\DD}^* \otimes \pi_{\delta+1}^*\gamma_{\mathbb{P}^3}^{* d},
\qquad \qquad 
\{\Psi_{\A_0}([f], [\eta], q_1, \ldots, q_{\delta+1})\}(f) := f(q_{\delta+1}) \qquad  \textnormal{and} \\
\Psi_{\A_1} : \psi_{\A_0}^{-1}(0) \lra \mathcal{V}_{\A_1} &:= \gamma_{\D}^*\otimes \pi_{\delta+1}^*W^* \otimes \pi_{\delta+1}^*\gamma_{\mathbb{P}^3}^{* d}, 
\qquad 
\{\Psi_{\A_1}([f], [\eta], q_1, \ldots, q_{\delta+1})\}(f) := \nabla f|_{q_{\delta+1}}. 
\end{align*}
We will  show shortly that $\psi_{\A_0}$ and $\psi_{A_1}$ 
are transverse to zero, provided $d \geq 2 \delta +1$. \\
\hf \hf Next, let us define 
\begin{align*}
\mathcal{B} &:= \overline{\A^{\delta}_1 \circ \mathcal{S}}_{\DD}- \A^{\delta}_1 \circ \mathcal{S}_{\DD}. 
\end{align*}
Hence 
\begin{align}
\lan e(\mathcal{L}_{\A_0}) e(\mathcal{V}_{\A_1}), 
~~[\overline{\A^{\delta}_1 \circ \mathcal{S}_{\DD}}] \cap [\mu] \ran & = \N(\A_1^{\delta}\A_1, r, s, n_1, n_2, n_3)  
+ \mathcal{C}_{\mathcal{B} \cap \mu},\label{euler_na1_delta}
\end{align}
where $e$ denote the Euler class and  
$\mathcal{C}_{\mathcal{B} \cap \mu}$ denotes the contribution of the section to the Euler 
class from the points of $\mathcal{B} \cap \mu$.\\
\hf \hf Let us first explain how to compute the left hand side of  \cref{euler_na1_delta}
(i.e. the Euler class). 
From equations \eqref{HL_Hp_class} and \eqref{s_ev}, we note that 
\begin{align*}
\mathcal{H}_L & = \lambda + d a, \qquad \mathcal{H}_p = \lambda a \qquad \textnormal{and} \qquad [\pi_{\delta+1}^*\mathcal{S}_{\DD}] = \lambda + \pi_{\delta+1}^*H. 
\end{align*}
Next, we need to compute the Chern classes of $W$. 
We note that over $\mathcal{S}$, we have the following short exact sequence of vector bundles:  
\begin{align*}
0&\longrightarrow W \longrightarrow T \mathbb{P}^3 \longrightarrow \gamma^{*}_{\widehat{\mathbb{P}}^3} \otimes \gamma^{*}_{\mathbb{P}^3} \longrightarrow 0. 
\end{align*}
Here the first map is the inclusion map and the second map is $\nabla \eta|_q$.
Hence, 
\begin{align}
c(W) c(\gamma^{*}_{\widehat{\mathbb{P}}^3} \otimes \gamma^{*}_{\mathbb{P}^3}) & = c(T\mathbb{P}^3) ~~
\implies c_1(W) = 3H-a \qquad \textnormal{and} \qquad c_2(W) = a^2-2aH + 3H^2. \label{ciW}
\end{align}
Next, using the splitting  principle, we conclude that 
\begin{align}
e(\gamma_{\mathcal{D}}^*\otimes \gamma_{\mathbb{P}^3}^{* d})e(\gamma_{\mathcal{D}}^*\otimes W^*\otimes \gamma_{\mathbb{P}^3}^{* d}) &= 
(\lambda + dH)((\lambda + dH)^2 -c_1(W)(\lambda + dH) + c_2(W)). \label{Eul_ring}
\end{align}
Note that we have made an abuse of notation by omitting to write down $\pi_{\delta+1}^*$; henceforth we will make this abuse of notation. 
Now, suppose $\delta =0$. 
Then, using the ring structure of $\mathcal{D}$ 
(as given by \cref{ring_str})
and by extracting the coefficient of $\lambda^{n-1} a^3 H^3$ from
\begin{align*}
(\lambda + H)(\lambda + dH)((\lambda + dH)^2 -c_1(W)(\lambda + dH) + c_2(W))(\lambda+d a)^r (\lambda a)^s a^{n_1} \lambda^{n_2} H^{n_3},
\end{align*}
we obtain the Euler  class. When $\delta=0$, using \cref{ciW}, we get 
 the formula of Theorem \ref{na1_again}. When 
$\delta>0$, we get $\textnormal{Eul}(\delta, r, s, n_1, n_2, n_3)$ as defined in \cref{eul_num_na1}.\\
\hf \hf Let us now explain how to compute $\mathcal{C}_{\mathcal{B} \cap \mu}$, the degenerate contribution to the Euler class. 
When $\delta=0$, the boundary $\mathcal{B}$ is empty and hence we get the result of Theorem \ref{na1_again}. Let us now consider the 
case when $\delta \geq 1$. Given $k$ distinct integers $i_1, i_2, \ldots, i_k \in [1, \delta+1]$, let us define 
\begin{align*}
\Delta_{i_1, \ldots, i_{k}} & := \{ ([f], [\eta];  q_1, \ldots, q_{\delta}, q_{\delta+1})  \in \mathcal{S}_{\mathcal{D}_{\delta+1}}: 
~~q_{i_1}= q_{i_2}=\ldots = q_{i_k}\} \qquad \textnormal{and} \\
\mathcal{B}(q_{i_1}, \ldots, q_{i_k}) & := \mathcal{B}\cap \Delta_{i_1, \ldots, i_k}.
\end{align*}
Let us now consider $\mathcal{B}(q_i, q_{\delta+1})$. We claim that 
\begin{align}
\mathcal{B}(q_i, q_{\delta+1}) &\approx \overline{A_1^{\delta-1}\circ A}_1 \qquad \forall ~~i=1 ~\textnormal{to} ~\delta, \label{a1_a0_bdry_cmp}
\end{align}
where $\mathcal{B}(q_i, q_{\delta+1})$ is identified as a subset of $\mathcal{S}_{\mathcal{D}_{\delta}}$ in the obvious way 
(namely via the inclusion of $\mathcal{S}_{\mathcal{D}_{\delta}}$ inside 
$\mathcal{S}_{\mathcal{D}_{\delta+1}}$ where the $(\delta+1)^{\textnormal{th}}$ point 
is equal to the $i^{\textnormal{th}}$ point). Next, we claim that 
the contribution from $\mathcal{B}(q_i, q_{\delta+1})\cap \mu$ is given by 
\begin{align}
\langle e(\mathcal{L}_{A_0}), ~[\overline{A_1^{\delta-1}\circ A}_1]\cap [\mu] \rangle  + 3N(A_1^{\delta-1}\mathcal{P}A_2, r,s,n_1, n_2, n_3, 0).  
\label{Eul_deg_node_bdry}
\end{align}
We will explain the reason for both the claims shortly. 
The expression given by \cref{Eul_deg_node_bdry} 
is precisely equal to $\mathrm{B}_1$ as defined in \cref{bd_na1}. Hence, the sum total of the contribution from 
$\mathcal{B}(q_i, q_{\delta+1})$ for $i=1$ to $\delta$ is $\binom{\delta}{1} \mathrm{B}_1$.\\ 
\hf  \hf Next, let us assume $\delta \geq 2$ and consider $\mathcal{B}(q_{i_1}, q_{i_2}, q_{\delta+1})$. We claim that 
\begin{align}
\mathcal{B}(q_{i_1}, q_{i_2}, q_{\delta+1}) &\approx \overline{A_1^{\delta-2}\circ A}_3 \label{a1_a1_is_a3}
\end{align}
for all distinct pairs $(i_1, i_2)$. We also claim that the contribution from each of the points of 
\[\mathcal{B}(q_{i_1}, q_{i_2}, q_{\delta+1})\cap \mu\] 
is $4$. We will justify both these claims  shortly.  
Hence the sum total of the contribution as we vary over all $(i_1, i_2)$ 
is  precisely $\binom{\delta}{2}\mathrm{B_2}$, where $\mathrm{B}_2$ is as defined in \cref{bd_na1}.\\ 
\hf \hf Finally, let us assume $\delta \geq 3$ and consider $\mathcal{B}(q_{i_1}, q_{i_2}, q_{i_3}, q_{\delta+1})$. 
We claim that 
\begin{align}
\mathcal{B}(q_{i_1}, q_{i_2}, q_{i_3}, q_{\delta+1}) &\approx \overline{A_1^{\delta-3}\circ A}_5 \cup 
\overline{A_1^{\delta-3}\circ D}_4\label{three_node_triple_point}
\end{align}
for all distinct triples $(i_1, i_2, i_3)$. Note that $\overline{A_1^{\delta-3}\circ A}_5 \cap \mu$ is empty, since 
the sum of their dimensions is one less than the dimension of the ambient space where we are intersecting them. Hence, 
we get no contribution from $\overline{A_1^{\delta-3}\circ A}_5 \cap \mu$. Finally, 
we claim that the contribution from each of the points of 
$\overline{A_1^{\delta-3}\circ D}_4 \cap \mu$ is $18$.
Hence the sum total of the contribution as we vary over all $(i_1, i_2, i_3)$ 
is  precisely $\binom{\delta}{3}\mathrm{B_3}$, where $\mathrm{B}_3$ is as defined in \cref{bd_na1}. \\ 
\hf \hf Let us now prove the claims regarding transversality and degenerate contributions to the Euler  class.  
We will start by proving transversality. 
Note that we need to prove $A_1^{\delta+1}$ is a smooth complex submanifold 
of $\mathcal{S}_{\mathcal{D}_{\delta+1}}$ (provided $d\geq 2\delta+1$). 
We will prove a stronger statement: we will show 
that 
$\overline{A}_1^{\delta+1}$ is a smooth complex submanifold 
of $\mathcal{S}_{\mathcal{D}_{\delta+1}}$ and the sections $\Psi_{A_0}$ and 
$\Psi_{A_1}$, defined on   
$\overline{A}_1^{\delta}\circ \mathcal{S}_{\delta}$ and $\Psi_{A_0}^{-1}(0)$ 
respectively, are transverse to zero. Our desired claim follows immediately from this statement since  
$A_1^{\delta+1}$ is an open subset of $\overline{A}_1^{\delta+1}$.\\
\hf \hf Let us begin by showing that $\Psi_{A_0}$ is transverse to zero if $d \geq 2\delta +1$. 
Suppose 
\begin{align*}
\Psi_{A_0}([f], [\eta], q_1, \ldots, q_{\delta+1})&= 0. 
\end{align*}
Without loss of generality, let us assume that $[\eta]$ determines the plane where the last coordinate is zero, 
and $q_{\delta+1}$ is the point where only the third coordinate is nonzero and the rest are zero, i.e. 
\begin{align*}
\mathbb{P}^2_{\eta} & \approx \{[X,Y,Z, W] \in \mathbb{P}^3: W=0\} \qquad \textnormal{and} \qquad q_{\delta+1}:= [0,0,1,0].
\end{align*}
Assume that the remaining points 
are given by 
\begin{align*}
q_i&:= [X_i, Y_i, Z_i, 0] \qquad \textnormal{for}~~i=1~~\textnormal{to}~~\delta. 
\end{align*}
For simplicity, we can assume that all $Z_i$ are nonzero. Furthermore, since all the $q_i$ are distinct, we conclude that 
$X_i$ and $Y_i$ can not both be zero; for simplicity let us assume $X_i$ is nonzero for each $i$ (from $1$ to $\delta$). 
Consider the homogeneous degree $d$ polynomial, given by  
\begin{align*}
\rho_{00}&:= (X-X_1)^2(X-X_2)^2\ldots \cdot (X-X_{\delta})^2 Z^{d-2\delta}. 
\end{align*}
We note the following facts about $\rho_{00}$: 
\begin{align}
\rho_{00}(q_i) &= 0 \qquad \forall ~~i=1~~\textnormal{to}~~\delta, \label{rho00}\\
\nabla \rho_{00}|_{q_i} &= 0 \qquad \forall ~~i=1~~\textnormal{to}~~\delta \qquad \textnormal{and} \label{nabla_rho_00}\\ 
\rho_{00}(q_{\delta+1})& \neq 0. \label{rho00_last_pt}
\end{align}
Now consider the curve $\gamma:(-\varepsilon, \varepsilon) \longrightarrow \mathcal{S}_{\mathcal{D}_{\delta+1}}$, given by 
\begin{align*}
\gamma(t)&:= ([f+t \rho_{00}], [\eta], q_1, \ldots, q_{\delta+1}). 
\end{align*}
Because of \cref{rho00} and \cref{nabla_rho_00}, 
we conclude that this curve lies in $\overline{A}_1^{\delta}\circ \mathcal{S}_{\mathcal{D}}$. We now  
note that 
\begin{align}
\{\{\nabla \Psi_{A_0}|_{([f], [\eta], q_1, \ldots, q_{\delta+1})}\}(\gamma^{\prime}(0))\}(f)& = \rho_{00}(q_{\delta+1}). 
\label{nabla_Psi_A0}
\end{align}
Using \cref{rho00_last_pt}, we conclude that the right hand side of \cref{nabla_Psi_A0} is nonzero, whence 
$\Psi_{A_0}$ is transverse to zero. Next, let us prove transversality for the section $\Psi_{A_1}$. Consider the polynomials, 
\begin{align*}
\rho_{10}&:= (X-X_1)^2(X-X_2)^2\ldots \cdot (X-X_{\delta})^2X Z^{d-2\delta-1} \qquad \textnormal{and} \\ 
\rho_{01}&:= (X-X_1)^2(X-X_2)^2\ldots \cdot (X-X_{\delta})^2Y Z^{d-2\delta-1}.
\end{align*}
We note that $\rho_{10}$ and $\rho_{01}$ satisfy \cref{rho00} and \cref{nabla_rho_00} 
(with $\rho_{00}$ replaced with $\rho_{10}$ and $\rho_{01}$ respectively). Furthermore, 
\begin{align}
\rho_{10}(q_{\delta+1})&=0 \qquad \textnormal{and} \qquad  \rho_{01}(q_{\delta+1})= 0. \label{rho_10_and_01}
\end{align}
Construct the curves 
\begin{align*}
\gamma_{10}(t)&:= ([f+t \rho_{10}], [\eta], q_1, \ldots, q_{\delta+1}) \qquad \textnormal{and} 
\qquad \gamma_{01}(t):= ([f+t \rho_{01}], [\eta], q_1, \ldots, q_{\delta+1}).
\end{align*}
Because of \cref{rho00} and \cref{nabla_rho_00} 
(with $\rho_{00}$ replaced with $\rho_{10}$ and $\rho_{01}$ respectively) and \cref{rho_10_and_01}, 
these curves lie inside $\Psi_{A_0}^{-1}(0)$.
We now note that 
\begin{align*}
\{\{\nabla \Psi_{A_1}|_{([f], [\eta], q_1, \ldots, q_{\delta+1})}\}(\gamma_{10}^{\prime}(0))\}(f)& = \lambda Z^{d-2\delta-1} \nabla X|_{[0,0,1,0]} \qquad \textnormal{and} \\ 
\{\{\nabla \Psi_{A_1}|_{([f], [\eta], q_1, \ldots, q_{\delta+1})}\}(\gamma_{01}^{\prime}(0))\}(f)& = \lambda Z^{d-2\delta-1} \nabla Y|_{[0,0,1,0]},
\qquad \textnormal{where} \qquad \lambda := (-X_1)^2 \ldots (-X_{\delta})^2.
\end{align*}
Since $\nabla X|_{[0,0,1,0]}$ and $\nabla Y|_{[0,0,1,0]}$ are two linearly independent vectors of $T\mathbb{P}^2_{\eta}|_{[0,0,1,0]}$, 
we conclude that $\Psi_{A_1}$ is transverse to zero. \\
\hf \hf Let us now justify the closure and multiplicity claims. We will start by giving the reason for \cref{a1_a0_bdry_cmp} 
and \cref{Eul_deg_node_bdry}. This follows from the argument given in the proof 
\cite[Lemma 6.3 (1), Page 685]{BM13_2pt_published} and \cite[Corollary 6.6,  Page 689]{BM13_2pt_published}. 
The proof is the same. \\ 
\hf \hf Next, let us justify \cref{a1_a1_is_a3}. Without loss of generality, it suffices to justify 
it when $i_1:= \delta-1$ and $i_2:= \delta$. Hence, we need to show that 
\begin{align}
\{([f], [\eta], q_1, \ldots,q_{\delta+1}) \in \overline{A_1^{\delta}\circ \mathcal{S}}_{\mathcal{D}}: 
q_{\delta-1} = q_{\delta} = q_{\delta+1}\}&= \overline{A_1^{\delta-2}\circ A}_3. \label{a1_a1_a3_exp}
\end{align}
Before proceeding further, let us make a simple observation. Notice that the left hand side of  
\cref{a1_a1_a3_exp} is the same as 
\begin{align}
\{([f], [\eta], q_1, \ldots,q_{\delta+1}) \in \overline{A_1^{\delta}}: 
q_{\delta-1} = q_{\delta}\}. \label{temp_nat}
\end{align}
Hence, an equivalent way of stating \cref{a1_a1_a3_exp} is 
\begin{align}
\{([f], [\eta], q_1, \ldots,q_{\delta}) \in \overline{A_1^{\delta}}: 
q_{\delta-1} = q_{\delta}\} & = \overline{A_1^{\delta-2}\circ A}_3. \label{a1_a1_a3_exp_ag2}
\end{align}
Following \cite[Equation 6.4, Page 685]{BM13_2pt_published}, we conclude that 
\begin{align}
\Big(\{([f], [\eta], q_1, \ldots, q_{\delta-1}, q_{\delta}, q_{\delta+1}) \in \overline{A_1^{\delta}\circ \mathcal{S}}_{\mathcal{D}}: 
q_{\delta-1} = q_{\delta} = q_{\delta+1}\}\Big)\cap \Big(A_1^{\delta-2}\circ A_2\Big) &= \emptyset. \label{two_nodes_No_cusp}
\end{align}
Equation \cref{two_nodes_No_cusp} is saying that if two nodes come together, then the singularity has to be more degenerate than a cusp. 
Hence, the singularity has to be at least as degenerate as a tacnode 
(since $\overline{A}_2 = A_2 \cup \overline{A}_3$). 
Hence, the left hand side of \cref{a1_a1_a3_exp} is a subset of its right hand side. We will now prove the converse. 
We will simultaneously prove the following four statements: 
\begin{align}
\{([f], [\eta], q_1, \ldots, q_{\delta+1}) &\in \overline{A_1^{\delta}\circ \mathcal{S}}_{\mathcal{D}}: 
q_{\delta-1} = q_{\delta} = q_{\delta+1}\} \supset A_1^{\delta-2}\circ A_3,\label{a1_a1_a3_subset}\\ 
\Big(\{([f], [\eta], q_1, \ldots, q_{\delta+1}) & \in \overline{A_1^{\delta}\circ A}_1: 
q_{\delta-1} = q_{\delta}= q_{\delta+1}\}\Big)\cap \Big(A_1^{\delta-2}\circ A_3\Big) = \emptyset, \label{a1_a1_a1_cap_a3_is_empty} \\ 
\Big(\{([f], [\eta], q_1, \ldots, q_{\delta+1}) & \in \overline{A_1^{\delta}\circ A}_1: 
q_{\delta-1} = q_{\delta}= q_{\delta+1}\}\Big)\cap \Big(A_1^{\delta-2}\circ A_4\Big) = \emptyset \qquad \textnormal{and}\label{three_nodes_canot_be_a4} \\ 
\{([f], [\eta], q_1, \ldots, q_{\delta+1}) &\in \overline{A_1^{\delta}\circ A}_1: 
q_{\delta-1} = q_{\delta} = q_{\delta+1}\} \supset A_1^{\delta-2}\circ A_5. \label{three_nodes_A5}
\end{align}
Since $\overline{A_1^{\delta}\circ \mathcal{S}}_{\mathcal{D}}$ is a closed set, 
\cref{a1_a1_a3_subset} implies that the right hand side of \cref{a1_a1_a3_exp} 
is a subset of its left hand side. Before we prove the above four statements, let us explain intuitively the 
significance of each of the statements.\\ 
\hf \hf The first statement, \cref{a1_a1_a3_subset} is saying that 
every tacnode is in the closure of two nodes (we remind the reader that the left hand side of \cref{a1_a1_a3_subset} 
is same as the expression 
given by \cref{temp_nat}). 
Geometrically, figure \ref{two_nodes_tacnode_pic} explains the meaning  of \cref{a1_a1_a3_subset}.  
\begin{figure}[h!]
\vspace*{0.2cm}
\begin{center}\includegraphics[scale = 0.5]{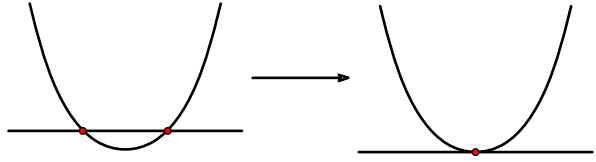}\vspace*{-0.2cm}\end{center}
\caption{Two nodes colliding into a tacnode}
\label{two_nodes_tacnode_pic}
\end{figure}
\newline
\hf\hf The second statement, \cref{a1_a1_a1_cap_a3_is_empty} is saying that in the closure of three nodes, we get a singularity 
more degenerate than a tacnode. The third statement, \cref{three_nodes_canot_be_a4} is saying that in the closure of
three nodes, we get a singularity 
more degenerate than an $A_4$ singularity. Finally, \cref{three_nodes_A5} is saying that every $A_5$ singularity 
is in the closure of three nodes. Geometrically, figure \ref{fig_3_nodes_a5_ag} explains the meaning  of \cref{three_nodes_A5} 
\begin{figure}[h!]
\vspace*{0.2cm}
\begin{center}\includegraphics[scale = 0.5]{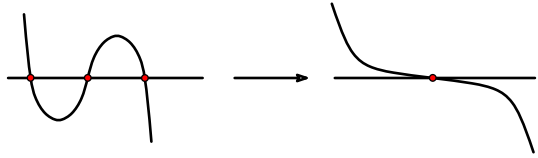}\vspace*{-0.2cm}\end{center}
\caption{Three nodes colliding into an $\A_5$-singularity}
\label{fig_3_nodes_a5_ag}
\end{figure}
\newline 
\hf  \hf We are now ready to prove the above statements. Let us prove the following two claims:
\begin{claim}
\label{two_and_three_node_claim}
Let $([f], [\eta], q_1, \ldots, \ldots, q_{\delta}) \in A_1^{\delta-2}\circ A_3$. Then there exists points 
\begin{align*}
\big([f_t], [\eta_t], q_1(t), \ldots, q_{\delta-2}(t); q_{\delta-1}(t), q_{\delta}(t), q_{\delta+1}(t)\big) \in 
A_1^{\delta-2}\circ \mathcal{S}_{\mathcal{D}}^{3} 
\end{align*}
sufficiently close to $([f], [\eta], q_1, \ldots, \ldots, q_{\delta-2}; q_{\delta}, q_{\delta}, q_{\delta})$, such that 
\begin{align}
f_t(q_{i}(t)) &=0, ~~~\nabla f_t|_{q_{i}(t)} = 0 \qquad \textnormal{for} ~~~i=\delta-1 ~~~\textnormal{and} ~~~\delta.  
\qquad 
\label{two_nodes_eqn}
\end{align}
Furthermore, \textit{every} such solution 
satisfies the condition 
\begin{align}
\Big(f_t(q_{\delta+1}(t)) , ~\nabla f_t|_{q_{\delta+1}(t)}\Big) &\neq (0, 0), \label{three_node_int_tacnode_empty}
\end{align}
i.e. 
$\big([f_t], [\eta_t], q_1(t), \ldots, q_{\delta-2}(t); q_{\delta-1}(t), q_{\delta}(t), q_{\delta+1}(t)\big) \not\in A_1^{\delta} \circ A_1$.
In fact, if 
\[([f], [\eta], q_1, \ldots, \ldots, q_{\delta}) \in A_1^{\delta-2}\circ A_4,\] 
then there does not exist any point 
\begin{align*}
\big([f_t], [\eta_t], q_1(t), \ldots, q_{\delta-2}(t); q_{\delta-1}(t), q_{\delta}(t), q_{\delta+1}(t)\big) \in 
A_1^{\delta-2}\circ \mathcal{S}_{\mathcal{D}}^{3} 
\end{align*}
sufficiently close to $([f], [\eta], q_1, \ldots, \ldots, q_{\delta-2}; q_{\delta}, q_{\delta}, q_{\delta})$, such that 
\begin{align}
f_t(q_{i}(t)) &=0, ~~~\nabla f_t|_{q_{i}(t)} = 0 \qquad \textnormal{for} ~~~i=\delta-1, ~\delta ~~\textnormal{and} ~~\delta+1.  
\qquad 
\end{align}
\end{claim}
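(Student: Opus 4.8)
The plan is to reduce the three assertions of the claim to an elementary computation inside the miniversal deformation of an $A_3$ (resp.\ $A_4$) singularity. First I would fix local holomorphic coordinates $(x,y)$ centred at $q_{\delta}$ in which $f$ has the normal form $y^2+x^4$ (resp.\ $y^2+x^5$), and keep $[\eta]$ and the points $q_1,\dots,q_{\delta-2}$ fixed; letting $[\eta_t]$ and the $q_i(t)$ move would only enlarge the deformation space and is not needed. Recall the miniversal deformation of $A_3$ is $F(x,y;\mathbf a)=y^2+x^4+a_2x^2+a_1x+a_0$ and that of $A_4$ is $y^2+x^5+a_3x^3+a_2x^2+a_1x+a_0$.

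\emph{Reduction to the miniversal family.} Because $d\geq 2\delta+1$, exactly the argument used above to prove transversality of $\Psi_{A_0}$ and $\Psi_{A_1}$ — writing down explicit degree $d$ polynomials (the analogues of $\rho_{00},\rho_{10},\rho_{01}$, together with polynomials realising the $x^2$ and $x^3$ monomial directions at $q_\delta$) that vanish to second order at each of $q_1,\dots,q_{\delta-2}$ — shows that the restriction map from the space of admissible perturbations of $f$ to the base of the miniversal deformation of the singularity at $q_\delta$ is a submersion, in particular surjective. By the universal property of miniversal unfoldings of isolated hypersurface singularities, every element of $A_1^{\delta-2}\circ\mathcal{S}_{\mathcal{D}}^{3}$ sufficiently close to $([f],[\eta],q_1,\dots,q_{\delta-2};q_\delta,q_\delta,q_\delta)$ agrees, near $q_\delta$ and after a local change of coordinates, with $F(\,\cdot\,;\mathbf a)$ for some small $\mathbf a$, and conversely every such $\mathbf a$ is realised. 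It therefore suffices to analyse the singular points of $F(\,\cdot\,;\mathbf a)$ near the origin.

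\emph{The polynomial count.} A singular point of $F(\,\cdot\,;\mathbf a)$ near the origin has $F_y=2y=0$, so $y=0$, and then $x$ is a common zero of $g(x):=x^4+a_2x^2+a_1x+a_0$ (resp.\ $x^5+a_3x^3+a_2x^2+a_1x+a_0$) and $g'$, i.e.\ a multiple root of $g$; it is a node precisely when the root is exactly double, and more degenerate otherwise. Since $\deg g=4$ (resp.\ $5$), $g$ has at most two double roots, so at most two of the three distinct points $q_{\delta-1}(t),q_\delta(t),q_{\delta+1}(t)$ can be nodes of $f_t$, and when $g$ attains two double roots the only singular points of $F$ near the origin are those two, so the third point is a smooth point of (or off) the curve. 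This yields \eqref{three_node_int_tacnode_empty} and \eqref{a1_a1_a1_cap_a3_is_empty}, and, since $\deg g=5$ forbids three double roots, also \eqref{three_nodes_canot_be_a4}. For the existence statement \eqref{a1_a1_a3_subset} in the $A_3$ case I would take $g(x)=(x^2-t^2)^2$, so that $F=\bigl(y-(x^2-t^2)\bigr)\bigl(y+(x^2-t^2)\bigr)$; for $t\neq0$ its singular locus is $\{(\pm t,0)\}$, each point a node (the two branches have linearly independent tangents there), and these collide at the tacnode as $t\to0$. Pairing this with the nodes at $q_1,\dots,q_{\delta-2}$, which persist since they are transverse, produces the required family. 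As a cross-check, the non-existence parts also follow from conservation of the $\delta$-invariant: both $A_3$ and $A_4$ have $\delta$-invariant $2$, whereas three nodes contribute $\delta=3$.

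\emph{Main obstacle.} The delicate step is the reduction: one must verify that the global perturbations actually available — degree $d$, subject to the $2(\delta-2)$ linear conditions forcing second-order vanishing at the retained nodes — still surject onto the full miniversal base of the $A_3$ (or $A_4$) germ at $q_\delta$, uniformly as the configuration varies in $A_1^{\delta-2}\circ A_3$ (resp.\ $A_1^{\delta-2}\circ A_4$). This is precisely where the hypothesis $d\geq2\delta+1$ enters, and it amounts to pushing the transversality computation already done for $\Psi_{A_0},\Psi_{A_1}$ two (resp.\ four) monomial directions further.
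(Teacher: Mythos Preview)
Your proof is correct and follows the same underlying strategy as the paper—reduce to a local normal form near the $A_3$ (respectively $A_4$) point and analyse the singular locus there—but you package the reduction via versal-deformation theory, whereas the paper carries out the normalising coordinate change $\hat{y}=y-B(x)$, $\hat{\hat{y}}=\sqrt{\varphi}\,\hat{y}$ explicitly, arriving at the same $\hat{\hat{y}}^{\,2}+\tfrac{\mathcal{B}_2}{2}x^2+\tfrac{\mathcal{B}_3}{6}x^3+\cdots$. Your endgame is genuinely cleaner: the double-root count (a quartic, respectively quintic, admits at most two double roots) and the $\delta$-invariant cross-check dispose of the non-existence statements in one line, while the paper instead solves the node equations explicitly and reads off that $\mathcal{B}_4^{f_t}$ (and $\mathcal{B}_5^{f_t}$) are forced small. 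The trade-off is that the paper's explicit parametrisation of the two-node locus near a tacnode (equation \eqref{soln_two_node_tacnode}) is reused immediately afterwards to compute the multiplicity $4$ of the tacnode contribution to the Euler class, and the analogous explicit solutions in \eqref{A6_nhbd} simultaneously establish Claim~\ref{three_nodes_A5_claim}; your argument proves the present claim but would have to be supplemented by such a parametrisation for those downstream computations. One trivial slip: your factorisation $(y-(x^2-t^2))(y+(x^2-t^2))$ equals $y^2-(x^2-t^2)^2$, not $y^2+(x^2-t^2)^2$; over $\mathbb{C}$ both are equivalent $A_3$ normal forms, so this is harmless.
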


\begin{claim}
\label{three_nodes_A5_claim}
Let $([f], [\eta], q_1, \ldots, \ldots, q_{\delta}) \in A_1^{\delta-2}\circ A_5$. Then there exists points 
\begin{align*}
\big([f_t], [\eta_t], q_1(t), \ldots, q_{\delta-2}(t); q_{\delta-1}(t), q_{\delta}(t), q_{\delta+1}(t)\big) \in 
A_1^{\delta-2}\circ A_1^{3} 
\end{align*}
sufficiently close to $([f], [\eta], q_1, \ldots, \ldots, q_{\delta-2}; q_{\delta}, q_{\delta}, q_{\delta})$. 
\end{claim}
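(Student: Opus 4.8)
The plan is to localise the problem at the $A_5$‑point and then realise the required deformation inside the degree‑$d$ linear system. Exactly as in the transversality argument above, after a linear change of coordinates on $\mathbb{P}^3$ we may assume $\mathbb{P}^2_\eta=\{W=0\}$, that the $A_5$‑point is $q_\delta=[0,0,1,0]$, and that $q_1,\dots,q_{\delta-2}$ lie in this plane; write $(x,y)=(X/Z,Y/Z)$ for affine coordinates centred at $q_\delta$ and $\tilde f$ for the dehomogenisation of $f$. The ordinary nodes at $q_1,\dots,q_{\delta-2}$ are stable under any deformation of $f$ vanishing to order two at each of them — this is precisely the $\rho_{00},\rho_{10},\rho_{01}$ computation carried out above, applied at those points — so each survives as a node at a nearby point $q_i(t)$. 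Hence it suffices to find, for small $t$, a degree‑$d$ polynomial $f_t$ with $f_0=f$, depending holomorphically on $t$, vanishing to order two at $q_1,\dots,q_{\delta-2}$, such that $f_t^{-1}(0)$ has three ordinary nodes converging to $q_\delta$ as $t\to 0$.

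For the local model, recall that an $A_5$ singularity means that near $q_\delta$ the curve $\tilde f=0$ is analytically equivalent to $\{v^2=u^6\}$; concretely $\tilde f=w\cdot\psi_*(v^2-u^6)$ for a biholomorphism germ $\psi:(u,v)\mapsto(x,y)$ and a local unit $w$. Put $\Delta_t(u):=u^6-(u^3-t\,c(u))^2$ for a fixed polynomial $c$ of degree $\le 2$ chosen so that $u^3-t\,c(u)$ has three distinct roots for all small $t\ne 0$; then $\{v^2=u^6-\Delta_t(u)\}=\{v^2=(u^3-t\,c(u))^2\}$ is a pair of smooth branches meeting transversally over those three roots, so it has exactly three ordinary nodes near $0$, and it limits to $\{v^2=u^6\}$ as $t\to 0$. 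By the finite determinacy of $A_5$ it is enough to prescribe a finite jet: if the jet of $\tilde f_t$ at $q_\delta$, read in the coordinates $(u,v)$, agrees up to order six with that of $w\cdot\psi_*(v^2-u^6-\Delta_t)$, then $f_t^{-1}(0)$ is analytically equivalent near $q_\delta$ to the model, hence three‑nodal there. The existence of such an $f_t$ is a linear‑algebra statement: the map from $H^0(\mathcal O(d))$ to the product of the order‑six jet space at $q_\delta$ with the order‑two jet spaces at $q_1,\dots,q_{\delta-2}$ is surjective once $d$ is large, and the hypothesis $d\ge 2\delta+1$ inherited from \Cref{na1_delta} leaves ample room; one can even write $f_t=f+\rho_t$ with $\rho_t$ built explicitly in the style of $\rho_{00}$, multiplying the prescribed local data by $\prod_i (X-X_i)^2$ and a suitable power of $Z$. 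Taking $f_t$ so constructed produces points of $A_1^{\delta-2}\circ A_1^{3}$ converging to $([f],[\eta],q_1,\dots,q_{\delta-2};q_\delta,q_\delta,q_\delta)$, proving the claim.

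The main obstacle is the interface between the analytic normalisation $\psi$ (only a local analytic coordinate change) and the algebraic realisation by a degree‑$d$ polynomial: one must check that matching $f_t$ to the model through a finite jet genuinely forces the global curve to break into three transversal nodes, which is where the determinacy of $A_5$ and the openness of the ``three ordinary nodes'' condition in the deformation space are used, and one must keep the jet‑matching compatible with the independent conditions imposed at $q_1,\dots,q_{\delta-2}$. A cleaner conceptual route, with comparable bookkeeping, is the inductive one: in the hyperelliptic model $y^2=u^4(u-a)^2\to y^2=u^6$ exhibits an $A_5$ as a limit of a tacnode colliding with a node, so $A_5\in\overline{A_3\circ A_1}$, and combining this with the fact that $A_3\in\overline{A_1\circ A_1}$ — already established as \eqref{a1_a1_a3_subset} — together with the persistence of nodes under the relevant deformations (justified exactly as in \Cref{two_and_three_node_claim}) yields $A_1^{\delta-2}\circ A_5\subset\overline{A_1^{\delta-2}\circ A_1^{3}}$.
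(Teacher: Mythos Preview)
Your approach differs from the paper's, and the finite-determinacy step has a genuine gap. The paper's argument is a direct computation: in the normal form $\hat{\hat y}^{\,2}+\sum_k \mathcal{B}_k^{f_t} x^k/k!$ of \eqref{hat_f_defn}, the conditions for two further nodes at $(u_1,0)$ and $(u_2,0)$ are solved explicitly, yielding \eqref{A6_nhbd}. At an $A_5$-point one has $\mathcal{B}_2^f=\cdots=\mathcal{B}_5^f=0$ and $\mathcal{B}_6^f\neq 0$, so \eqref{A6_nhbd} parametrises, by the free small parameters $u_1,u_2$, a two-dimensional family of nearby three-nodal curves. No determinacy or abstract deformation theory is invoked, and the same formulae feed directly into the multiplicity computations that follow.

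Your determinacy claim is not correct as stated. Finite determinacy of $A_5$ says that the $6$-jet at the origin determines the singularity \emph{at the origin}; it says nothing about singularities at the points of distance $\sim t^{1/3}$ where the three nodes of $v^2=(u^3-tc(u))^2$ sit. Concretely: if $g(u,v)=v^2-(u^3-t)^2+R(u,v)$ with $R$ of order $\ge 7$, then after completing the square in $v$ one gets $g=\hat v^{\,2}+q(u)$ with $q(u)=-(u^3-t)^2+O(u^7)$; at a critical point $u_0$ with $u_0^3\approx t$ one finds $q(u_0)=O(t^{7/3})$, generically nonzero, so $q$ need not have three double roots and $g^{-1}(0)$ need not be three-nodal. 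What you actually need is that the map from the linear system (with the order-two vanishing at $q_1,\dots,q_{\delta-2}$) to the base of the miniversal deformation of the $A_5$-germ is a \emph{submersion}---this is the correct consequence of your jet-surjectivity statement---so that the nonempty three-$A_1$ stratum there pulls back nontrivially. Your inductive alternative via $A_5\subset\overline{A_3\circ A_1}$ is sound in spirit but requires its own closure argument and the same realisation step inside the linear system; the paper's explicit solution \eqref{A6_nhbd} sidesteps all of this.
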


\begin{rem}
We note claim \ref{two_and_three_node_claim} proves \cref{a1_a1_a3_subset}, \cref{a1_a1_a1_cap_a3_is_empty} 
and \cref{three_nodes_canot_be_a4} simultaneously. We also note that claim \ref{three_nodes_A5_claim} proves \cref{three_nodes_A5}.
\end{rem}

\begin{rem}
\label{not_big_O}
Before proceeding with the proof, let us make a shorthand notation. We denote 
\begin{align*}
O(|(x_1, x_2, \ldots, x_n)|^k)  
\end{align*}
to be a \textbf{holomorphic} function (in the variables $x_1, \ldots, x_n$), 
defined in a neighborhood of the origin in $\mathbb{C}^n$,
whose order of vanishing is at least $k$ 
(i.e. all the terms of degree lower than $k$ are absent in the Taylor expansion of the function around the origin). 
We say that such an expressions is of order $k$. 
For example, $x_1^4 + x_1 x_2 x_3^2 + x_2^2 x_3^3$ is a term of order $4$ and we will denote it by $O(|(x_1, x_2, x_3)|^4)$. 
Note that we are always dealing with holomorphic functions. Hence, suppose a function (in say one variable) is of type 
$O(|x|^2)$, it means, its Taylor expansion is of the type 
\[ f(x) = a_2 x^2 + a_3 x^3 + \ldots. \] 
It does not mean that there are terms of type $x \overline{x}$ (although the $|x|^2$ in the $O(|x|^2)$ might suggest that).  
Henceforth, it will be understood that $O(|x|^n)$ and $O(x^n)$ mean the same thing in our paper (the latter is the standard 
notation in one variable).  
\end{rem}

\noindent \textbf{Proof of claims \ref{two_and_three_node_claim} and \ref{three_nodes_A5_claim}:} 
Let us define 
\begin{align*}
\mathbb{C}^2_{z}&:= \{(x,y,z) \in \mathbb{C}^3: z=0\}. 
\end{align*}
We will now work in an affine chart where we send the plane 
$\mathbb{P}^2_{\eta_t}$ to $\mathbb{C}^2_{z}$ and the 
point $q_{\delta}(t) \in \mathbb{P}^2_{\eta_t}$ to 
$(0,0,0) \in \mathbb{C}^2_{z}$. 
Using this  chart, 
let us write down the Taylor expansion of $f_t$ around the point $(0,0)$, namely  
\begin{align*}
f_t(x,y)&= \frac{f_{t_{20}}}{2} x^2 + f_{t_{11}}xy + \frac{f_{t_{02}}}{2} y^2 + \ldots  
\end{align*}
Note that since \cref{two_nodes_eqn} holds (for $i=\delta$), we conclude that $f_{t_{00}}, f_{t_{10}}$ and $f_{t_{01}}$ are zero.\\
\hf \hf Next, since $([f], [\eta], q_{\delta}) \in A_3$, we conclude that 
$f_{t_{20}}$ and $f_{t_{02}}$ can not both be zero; let us assume $f_{t_{02}} \neq 0$. Hence, 
$f_t(x,y)$ can be re-written as 
\begin{align*}
f_t(x,y)&= A_0(x) + A_1(x)y + A_2(x) y^2 + \ldots \qquad \textnormal{where} \qquad A_2(0) \neq 0.  
\end{align*}
We will now make a change of coordinates; let us define 
\begin{align*}
\hat{y}&:= y- B(x) 
\end{align*}
where $B(x)$ is a function that is to be determined. We claim that
there exists a unique holomorphic $B(x)$ (vanishing at the origin) such that after 
plugging it in $f_t(x, y)$ we get 
\begin{align*}
f_t (x, y(x, \hat{y})) &= \hat{A}_0(x) + \hat{A}_2 (x) \hat{y}^2 + \hat{A}_3 (x) \hat{y}^3 + \ldots   
\end{align*}
In other words, we want $\hat{A}_1(x) \equiv 0$. This is possible if $B(x)$ satisfies the equation
\begin{align}
A_1(x) + 2A_2(x) B(x) + 3A_3(x) B(x)^2 + \ldots & = 0. \label{psconvgg2} 
\end{align}
Since $A_2(0) = \frac{f_{t_{02}}}{2}  \neq 0$, $B(x)$ exists by the Implicit Function Theorem and 
we can compute $B(x)$ explicitly  
as a power series using \eqref{psconvgg2} and then
compute $\hat{A}_0(x)$. Hence, 
\begin{align*}
f_t (x, y(x, \hat{y})) &=  
\varphi(x,\hat{y})\hat{y}^2 + \frac{\mathcal{B}_2^{f_t}}{2!} x^2 + 
\frac{\mathcal{B}_3^{f_t}}{3!} x^3 + \frac{\mathcal{B}_4^{f_t}}{4!} x^4 + \mathcal{R}(x)x^5,  
\end{align*}
where 
\begin{align*}
\B^{f_t}_2 & :=  f_{t_{20}} - \frac{f_{t_{11}}^2}{f_{t_{02}}}, \qquad \B^{f_t}_3 := \frac{f_{t_{30}}}{6}
+\frac{f_{t_{11}}^2 f_{t_{12}}}{f_{t_{02}}^2}, \ldots \ldots, \qquad   
\varphi(0,0) \neq 0
\end{align*}
and $\mathcal{R}(x)$ is a holomorphic function defined in a neighborhood of the origin. 
Since $([f], [\eta], q_{\delta}) \in A_3$, 
we conclude that $\B^{f_t}_2$ and $\B^{f_t}_3$ are small (close to zero) and $\B^{f_t}_4$ is nonzero.  
Let us make a further change of coordinates and denote 
\begin{align*}
\hat{\hat{y}}&:= \sqrt{\varphi(x, \hat{y})} \hat{y}.
\end{align*}
Note that we can choose a branch of the square root since $\varphi(0,0) \neq 0$. 
Next, for notational convenience, let us now define 
\begin{align}
\hat{f}_t(x, \hat{\hat{y}})  &:= f_t (x, y(x, \hat{y}(\hat{\hat{y}})))), \label{hat_hat_f_defn}
\end{align}
i.e. $\hat{f}_t$ is basically $f_t$ written in the new coordinates (namely $x$ and $\hat{\hat{y}}$). Hence, 
\begin{align*}
\hat{f}_t(x, \hat{\hat{y}}) & = \hat{\hat{y}}^2 + \frac{\mathcal{B}_2^{f_t}}{2!} x^2 + 
\frac{\mathcal{B}_3^{f_t}}{3!} x^3 + \frac{\mathcal{B}_4^{f_t}}{4!} x^4 + \mathcal{R}(x)x^5.
\end{align*}
We will now solve \cref{two_nodes_eqn} for $i=\delta-1$. We note that this is amounts to solving for 
the set of equations 
\begin{align}
\hat{f}_t(u, v) = 0, \qquad \hat{f}_{t_x}(u, v) = 0 \qquad \textnormal{and} \qquad 
\hat{f}_{t_{\hat{\hat{y}}}}(u, v) = 0 \quad (u, v)\neq (0,0) ~~~\textnormal{but small}, \label{hathatf_a2}   
\end{align}
\textbf{and} requiring $\hat{f}_t$ to have $\delta-2$ more  nodes (all distinct from each other and distinct from 
$(0,0)$ and $(u, v)$). 
The solutions to \cref{hathatf_a2} are given by 
\begin{align}
v =0, \qquad \mathcal{B}^{f_t}_2 & = \frac{\mathcal{B}_4^{f_t}}{12} u^2 + 4 u^3 \mathcal{R}(u) + 2 u^4 \mathcal{R}^{\prime}(u) \qquad \textnormal{and} \nonumber \\
\mathcal{B}^{f_t}_3 & = -\frac{\mathcal{B}_4^{f_t}}{2} u -18 u^2 \mathcal{R}(u) -6u^3 \mathcal{R}^{\prime}(u). \label{soln_two_node_tacnode}
\end{align}
To see how, we first use the third equation of \cref{hathatf_a2} to get $v=0$. Then we use the second and first equations 
of \cref{hathatf_a2} to get the value of $\mathcal{B}^{f_t}_2$ and $\mathcal{B}^{f_t}_3$.\\ 
\hf \hf We now require the curve to have $\delta-2$ more nodes. To do that, first construct a degree $4$ curve that satisfies 
\cref{soln_two_node_tacnode}; we can do that since $\B^{f_t}_4$ only depends on the fourth order derivatives of $f_t$. 
Call this degree $4$ curve $g$.
Let us now assume 
that the points $q_1, q_2, \ldots, q_{\delta-2}$ correspond to $(x_1, y_1), \ldots, (x_{\delta-2}, y_{\delta-2})$ under the affine chart 
we are considering. Define 
\begin{align*}
f_t &:= g(x,y)\cdot ((x-x_1)^2+ (y-y_1)^2)\ldots ((x-x_{\delta-2})^2+ (y-y_{\delta-2})^2).  
\end{align*}
This curve $f_t$ satisfies \cref{hathatf_a2} and has $\delta-2$ nodes. 
This argument works provided the degree of the curve is at least $4+2(\delta-2)$.
Hence, solutions to \cref{two_nodes_eqn} exist, if $d \geq 4 + 2(\delta-2)$.\\ 
\hf \hf Next, let us prove \cref{three_node_int_tacnode_empty}, i.e. we have to show 
that in a neighborhood of a tacnode, we can not have a curve with three distinct nodes. More precisely, we need to show that there 
can not be any solutions to the set of equations 
\begin{align}
\hat{f}_t(u_1, v_1) =0, \quad &\hat{f}_{t_x}(u_1, v_1)=0, \quad \hat{f}_{t_{\hat{\hat{y}}}}(u_1, v_1) =0, \label{three_node_uv1} \\ 
\hat{f}_t(u_2, v_2) =0, \quad &\hat{f}_{t_x}(u_2, v_2)=0, \quad \hat{f}_{t_{\hat{\hat{y}}}}(u_2, v_2) =0,  \label{three_node_uv2}\\ 
& (0,0), ~~(u_1, v_1) ~~\textnormal{and} ~~(u_2, v_2) ~~\textnormal{all distinct (but small)}. \nonumber 
\end{align}
Let us try to solve for the above set of equations. Let us first explicitly write down $\hat{f}_t(x, \hat{\hat{y}})$ as 
\begin{align}
\hat{f}_t(x, \hat{\hat{y}})&= \hat{\hat{y}}^2 + \frac{\mathcal{B}_2^{f_t}}{2!} x^2 + 
\frac{\mathcal{B}_3^{f_t}}{3!} x^3 + \frac{\mathcal{B}_4^{f_t}}{4!} x^4 + \frac{\mathcal{B}_5^{f_t}}{5!} x^5 
+\frac{\mathcal{B}_6^{f_t}}{6!} x^6 + \ldots \label{hat_f_defn}
\end{align}
To begin with, we unwind \cref{three_node_uv1} using the expression for $\hat{f}_t$ as given by \cref{hat_f_defn} 
and solve for $\B^{f_t}_2$ and $\B^{f_t}_3$ in terms of $u_1, v_1$, 
$\B^{f_t}_4, \B^{f_t}_5$ and $\B^{f_t}_6$. We then plug in these values 
for $\B^{f_t}_2$ and $\B^{f_t}_3$ in \cref{hat_f_defn} and plug it in \cref{three_node_uv2}. Now we can solve for 
$\B^{f_t}_4$ and $\B^{f_t}_5$ in terms of $\B^{f_t}_6$ and then plugging back those values in the previous expressions 
for $\B^{f_t}_2$ and $\B^{f_t}_3$, gives us their values in terms of $\B^{f_t}_6$. Doing that, we get  
\begin{align}
v_1, v_2 & =0, \nonumber \\  
\B^{f_t}_2 & = \frac{1}{360} \B^{f_t}_6 u_1^2 u_2^2 + O(|(u_1, u_2)|^5),  \qquad 
\B^{f_t}_3 = -\frac{1}{60} \B^{f_t}_6(u_1^2 u_2 + u_1 u_2^2)+ O(|(u_1, u_2)|^4), \nonumber \\
\B^{f_t}_4 &= \frac{1}{30}\B^{f_t}_6(u_1^2+ 4 u_1 u_2 + u_2^2)+ O(|(u_1, u_2)|^3) \qquad \textnormal{and} \qquad 
\B^{f_t}_5 = -\frac{1}{3}\B^{f_t}_6 (u_1+u_2)+ O(|(u_1, u_2)|^2), \label{A6_nhbd}
\end{align}
where $O(|(u_1, u_2)|^n)$ is as defined in \cref{not_big_O}. 
Hence, $\B^{f_t}_4$ is close to zero, which is a contradiction, since $([f], [\eta], q_{\delta}) \in A_3$. 
Since $\B^{f_t}_5$ is also close to zero, we get the last part of the claim \ref{two_and_three_node_claim} 
(i.e. \cref{three_nodes_canot_be_a4}). Finally, we note that the solutions constructed in \cref{A6_nhbd} immediately 
prove claim \ref{three_nodes_A5_claim} (in fact these are the \textit{only} possible solutions). 
This finishes the proof of claims \ref{two_and_three_node_claim} and \ref{three_nodes_A5_claim}. \\ 
\hf \hf Next, 
we claim that each point of $(A_1^{\delta-2}\circ A_3)\cap \mu$ 
contributes $4$ to the Euler  class in \cref{euler_na1_delta}. Using \cref{soln_two_node_tacnode}, 
we conclude that the multiplicity is the number of small solutions 
$(x, \hat{\hat{y}}, u)$
to the following set of equations 
\begin{align*}
\hat{f}_t(x, \hat{\hat{y}}) & := \hat{\hat{y}}^2 + \frac{\mathcal{B}_2^{f_t}}{2!} x^2 + 
\frac{\mathcal{B}_3^{f_t}}{3!} x^3 + \frac{\mathcal{B}_4^{f_t}}{4!} x^4 + \mathcal{R}(x) x^5 =\varepsilon_0, \\ 
\hat{f_t}_x(x, \hat{\hat{y}}) & := \B^{f_t}_2 x + \frac{\B^{f_t}_3}{2} x^2 + \frac{\B^{f_t}_4}{12} x^3 + 5 x^4 \mathcal{R}(x) + 
\mathcal{R}^{\prime}(x)x^5  =\varepsilon_1, \qquad 
\hat{f_t}_{\hat{\hat{y}}}(x, \hat{\hat{y}}):= 2 \hat{\hat{y}} = \varepsilon_2, \\
\B^{f_t}_2& = \frac{\mathcal{B}_4^{f_t}}{12} u^2 + 4 u^3 \mathcal{R}(u) + 2 u^4 \mathcal{R}^{\prime}(u) \qquad 
\textnormal{and} \qquad 
\mathcal{B}^{f_t}_3 = -\frac{\mathcal{B}_4^{f_t}}{2} u -18 u^2 \mathcal{R}(u) -6u^3 \mathcal{R}^{\prime}(u),
\end{align*}
where $(\varepsilon_0, \varepsilon_1, \varepsilon_2) \in \mathbb{C}^3$ is small and generic. 
Let us write $u:= h+x$ and Taylor expand 
$\mathcal{R}(x+h)$ and $\mathcal{R}^{\prime}(x+h)$ around $h=0$, i.e. 
\begin{align}
\mathcal{R}(x+h)& = \mathcal{R}(x) + h \mathcal{R}^{\prime}(x) + \frac{h^2}{2} \mathcal{R}^{\prime \prime}(x) + \ldots   
\qquad \textnormal{and} \qquad 
\mathcal{R}^{\prime}(x+h) = \mathcal{R}^{\prime}(x) + h \mathcal{R}^{\prime \prime}(x) + \ldots
\end{align}
Hence, substituting the values of $\B^{f_t}_2$, $\B^{f_t}_3$,
$\mathcal{R}(x+h)$ and $\mathcal{R}^{\prime}(x+h)$ 
we conclude that we need to 
find the number of small solutions $(x,h)$ to the following set of equations 
\begin{align}
\frac{(x^2 h^2)\Big( \mathcal{B}_4^{f_t} + O(|(x,h)|) \Big)}{24} & = \varepsilon_3 \qquad \textnormal{and} \label{ep3} \\ 
\frac{(xh)\Big(\mathcal{B}_4^{f_t}h-\mathcal{B}_4^{f_t}x + O(|(x,h)|^2)  \Big)}{12} & = \varepsilon_1, \label{ep2}
\end{align}
where $\varepsilon_3:= \varepsilon_0 - \frac{\varepsilon_2^2}{4}$. 
We claim that we can set $\varepsilon_1$ to be $0$; that is justified in \cref{local_degree}. 
Assuming that claim, we use 
\cref{ep2} to solve for $x$ in terms of $h$ and conclude that 
\begin{align}
x&= h + O(h^2). \label{x_h} 
\end{align}
This is because $x=0$ and $h=0$ can not be solutions to \cref{ep2} 
(since if we plug it back in \cref{ep3}, we will get $0$ and 
not $\varepsilon_3$). 
Plugging in the value of $x$ from \cref{x_h} into \cref{ep3}, we get 
\begin{align}
\frac{\mathcal{B}_4^{f_t}}{24} h^4 + O(h^5) &= \varepsilon_3. \label{h4}
\end{align}
Equation \eqref{h4} clearly has $4$ solutions.\\  
\hf \hf Finally, we need to 
justify \cref{three_node_triple_point} and the corresponding contribution to the Euler  class. 
More precisely, we are going to show that  
\begin{align}
\{([f], [\eta], q_1, \ldots,q_{\delta+1}) \in \overline{A_1^{\delta}\circ \mathcal{S}}_{\mathcal{D}}: 
q_{\delta-2}= q_{\delta-1} = q_{\delta} = q_{\delta+1}\}&= \overline{A_1^{\delta-3}\circ A}_5 \cup 
\overline{A_1^{\delta-3}\circ D}_4. \label{three_nodes_collide_D4_yy}
\end{align}
Just like \cref{a1_a1_a3_exp} is equivalent to \cref{a1_a1_a3_exp_ag2}, we similarly conclude  
that \cref{three_nodes_collide_D4_yy} can be equivalently stated as 
\begin{align}
\{([f], [\eta], q_1, \ldots,q_{\delta}) \in \overline{A_1^{\delta}}: 
q_{\delta-2}= q_{\delta-1} = q_{\delta}\}&= \overline{A_1^{\delta-3}\circ A}_5 \cup 
\overline{A_1^{\delta-3}\circ D}_4. \label{three_nodes_collide_D4_ag2}
\end{align}
Let us define 
\begin{align}
W_1&:= \{ ([f], [\eta], q_1, \ldots,q_{\delta+1}) \in \mathcal{S}_{\mathcal{D}_{\delta+1}}: f(q_{\delta+1})=0, ~\nabla f|_{q_{\delta+1}} =0, ~~
\nabla^2f|_{q_{\delta+1}} \neq 0\} \qquad \textnormal{and} \nonumber \\ 
W_2&:= \{ ([f], [\eta], q_1, \ldots,q_{\delta+1}) \in \mathcal{S}_{\mathcal{D}_{\delta+1}}: f(q_{\delta+1})=0, ~\nabla f|_{q_{\delta+1}} =0, ~~
\nabla^2f|_{q_{\delta+1}} = 0\}. \label{W12_defn}
\end{align}
In order to prove \cref{three_nodes_collide_D4_yy}, it suffices to show that 
\begin{align}
\Big(\{([f], [\eta], q_1, \ldots,q_{\delta+1}) \in \overline{A_1^{\delta}\circ \mathcal{S}}_{\mathcal{D}}: 
q_{\delta-2}= q_{\delta-1} = q_{\delta} = q_{\delta+1}\}\Big)\cap W_1 & = \Big(\overline{A_1^{\delta-3}\circ A}_5\Big)\cap W_1 ~~~
\textnormal{and} \label{three_node_cap_w1} \\
\Big(\{([f], [\eta], q_1, \ldots,q_{\delta+1}) \in \overline{A_1^{\delta}\circ \mathcal{S}}_{\mathcal{D}}: 
q_{\delta-2}= q_{\delta-1} = q_{\delta} = q_{\delta+1}\}\Big)\cap W_2 & = \overline{A_1^{\delta-3}\circ D}_4. \label{three_node_cap_w2}
\end{align}
\hf \hf Note that the right hand side of \cref{three_node_cap_w2} is a subset of $W_2$; hence we didn't write down $\cap W_2$ on the right hand side 
of \cref{three_node_cap_w2}.
Let us first justify \cref{three_node_cap_w1}. Equations 
\eqref{a1_a1_a1_cap_a3_is_empty} and \eqref{three_nodes_canot_be_a4}, 
show that the the left hand side of 
\eqref{three_node_cap_w1} is a subset of its right hand side.
Furthermore, \cref{three_nodes_A5} shows that 
the right hand side of 
\eqref{three_node_cap_w1} is a subset of its left hand side; hence \cref{three_node_cap_w1} is true. \\ 
\hf \hf We will now prove \cref{three_node_cap_w2}. 
Equation \eqref{a1_a1_a1_cap_a3_is_empty} shows that the left hand side of 
\cref{three_node_cap_w2} is a subset of its right hand side. Hence, what remains is to show that the right hand side 
of \cref{three_node_cap_w2} is a subset of its left hand side. 
Before we start the proof of that assertion, let us give an intuitive idea about the significance of that statement.
The statement is saying that every triple point is in the closure of three nodes. 
\begin{figure}[h!]
\vspace*{0.2cm}
\begin{center}\includegraphics[scale = 0.5]{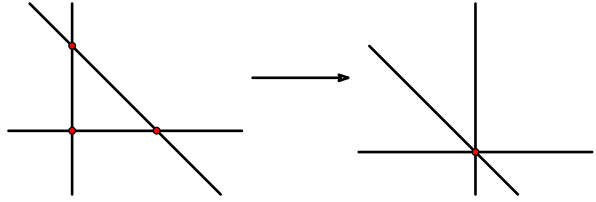}\vspace*{-0.2cm}\end{center}
\caption{Three nodes colliding into a triple point}
\label{fig_3_nodes_triple_pt_ag}
\end{figure}
To summarize, the geometric significance of \cref{three_node_cap_w1} is given by figure \ref{fig_3_nodes_a5_ag} while the geometric significance of 
\cref{three_node_cap_w2} is given by figure \ref{fig_3_nodes_triple_pt_ag}. Equation \eqref{three_nodes_collide_D4_yy} says that these are 
the \textbf{only} two pictures that can occur.\\ 
\hf \hf Let us now prove \cref{three_node_cap_w2}. We will prove the following claim: 
\begin{claim}
\label{three_node_claim_d4}
Let $([f], [\eta], q_1, \ldots, \ldots, q_{\delta}) \in A_1^{\delta-3}\circ D_4$. Then, there exists points 
\begin{align*}
\big([f_t], [\eta_t], q_1(t), \ldots, q_{\delta-3}(t); q_{\delta-2}(t), q_{\delta-1}(t), q_{\delta}(t), q_{\delta+1}(t)\big) \in 
A_1^{\delta-3}\circ \mathcal{S}_{\mathcal{D}}^{4} 
\end{align*}
sufficiently close to $([f], [\eta], q_1, \ldots, \ldots, q_{\delta-3}; q_{\delta}, q_{\delta}, q_{\delta}, q_{\delta})$, such that 
\begin{align}
f_t(q_{i}(t)) &=0, ~~~\nabla f_t|_{q_{i}(t)} = 0 \qquad \textnormal{for} ~~~i=\delta-2, ~~\delta-1 ~~~\textnormal{and} ~~~\delta. 
\label{D4_nhbd_eqn}
\end{align}
\end{claim}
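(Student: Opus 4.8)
The plan is to follow the pattern of the proofs of Claims~\ref{two_and_three_node_claim} and~\ref{three_nodes_A5_claim}: produce the required three nodes by an explicit local deformation near $q_{\delta}$, and then globalize while reinstating the other $\delta-3$ nodes. As in those proofs, I would work in an affine chart of $\mathbb{P}^2_{\eta}$ sending $q_{\delta}$ to the origin of $\mathbb{C}^2_z$, and write $f = C + R$, where $C$ is the cubic part of $f$ at the origin and $R$ collects all terms of order $\ge 4$. The condition $([f],[\eta],q_{\delta})\in D_4$ says precisely that the $0$-, $1$- and $2$-jets of $f$ vanish and that $C = \ell_1\ell_2\ell_3$ is a binary cubic with three \emph{distinct} linear factors; equivalently, $\{f=0\}$ is, near $q_{\delta}$, the union of three smooth branches meeting pairwise transversally at $q_{\delta}$. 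Note also that $\nabla^2 f|_{q_{\delta}}=0$, so the point automatically lies in the set $W_2$ of \cref{W12_defn}, which is what makes it eligible for the left hand side of \cref{three_node_cap_w2}.

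For the deformation I would pull the three branches apart so that each of the three pairs meets at a distinct nearby point; this creates exactly $\binom{3}{2}=3$ nodes converging to $q_{\delta}$, which is the content of Figure~\ref{fig_3_nodes_triple_pt_ag}. Concretely, for generic constants $c_1,c_2,c_3$ put $C_t := (\ell_1 - tc_1)(\ell_2 - tc_2)(\ell_3 - tc_3)$; since $C_t - C$ has no cubic part it is a polynomial of degree $\le 2$, so $f + (C_t - C) = C_t + R$ is again of degree $\le d$. For small $t\neq 0$ the curve $C_t$ has three honest nodes at distance $O(t)$ from $q_{\delta}$ (the pairwise intersections of the translated lines), each with Hessian of size $O(t)$; since $R$ has order $\ge 4$, near these points $R$ contributes only $O(t^2)$ to the Hessian and $O(t^4)$ to the value, so $f + (C_t - C)$ retains three nondegenerate critical points near $q_{\delta}$, now with critical values of size $O(t^4)$ rather than exactly zero. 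I would then bring these values to zero by a small affine correction: adding $a_0 + a_1 x + a_2 y$ with $(a_0,a_1,a_2) = O(t^3)$, which is possible by the inverse function theorem because the three critical points are, for generic $c_i$, not collinear. This yields a degree-$\le d$ perturbation $f_t := f + (C_t - C) + (a_0 + a_1 x + a_2 y)$ of $f$, with $f_t \to f$, having three genuine nodes $q_{\delta-2}(t),q_{\delta-1}(t),q_{\delta}(t)$ tending to $q_{\delta}$.

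To reinstate the remaining nodes, let $q_1,\dots,q_{\delta-3}$ correspond to $(x_1,y_1),\dots,(x_{\delta-3},y_{\delta-3})$ in the chart and, exactly as in Claim~\ref{two_and_three_node_claim}, replace $f_t$ by
\begin{align*}
\tilde f_t := f_t\cdot\prod_{i=1}^{\delta-3}\big((x-x_i)^2 + (y-y_i)^2\big)
\end{align*}
(extended to a degree $d$ curve in the obvious way; this is where the hypothesis $d\ge 2\delta+1$, comfortably larger than $3 + 2(\delta-3)$, is used). The extra factors are units near $q_{\delta}$, so they do not disturb the three nodes built above, while each contributes a node at $q_i$. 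Taking $q_{\delta+1}(t)$ to be any point of $\mathbb{P}^2_{\eta}$ tending to $q_{\delta}$ and distinct from the other marked points, the tuple $\big([\tilde f_t],[\eta],q_1(t),\dots,q_{\delta-3}(t);q_{\delta-2}(t),q_{\delta-1}(t),q_{\delta}(t),q_{\delta+1}(t)\big)$ lies in $A_1^{\delta-3}\circ \mathcal{S}_{\mathcal{D}}^{4}$ and satisfies \cref{D4_nhbd_eqn}; here I use that $A_1^{\delta}\circ \mathcal{S}_{\mathcal{D}}$ only requires nodes \emph{at} the marked points, so whatever singularities $\tilde f_t$ may acquire elsewhere are irrelevant. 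As $t\to 0$ this tuple converges to $([f],[\eta],q_1,\dots,q_{\delta-3};q_{\delta},q_{\delta},q_{\delta},q_{\delta})$, so this point lies in $\overline{A_1^{\delta}\circ\mathcal{S}}_{\mathcal{D}}$ with the triple collision. Since $A_1^{\delta-3}\circ D_4$ is dense in its closure and every point of it lies in $W_2$, this proves $\overline{A_1^{\delta-3}\circ D}_4\subseteq$ (the left hand side of \cref{three_node_cap_w2}), which is exactly what \cref{three_node_cap_w2} needs.

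The step I expect to be the main obstacle is the local deformation analysis. Unlike the $A_k$ normal forms used earlier in this section, which reduce to a Weierstrass form $\hat{\hat{y}}^2 + (\text{polynomial in }x)$ with the node equations essentially solvable by inspection (compare \cref{soln_two_node_tacnode}), the $D_4$ model has its lowest order part in degree three --- three concurrent smooth branches --- and the three node deformation is not generic among deformations of a $D_4$ (a generic deformation produces a single node). Turning the picture of ``pulling apart three branches'' into a rigorous statement requires the Hessian size comparisons used to control the order-$\ge 4$ tail $R$, the non-collinearity input that makes the affine correction possible, and a contraction mapping argument to absorb the lower order error terms. An alternative is to first place $f$ in an explicit $D_4$ normal form near $q_{\delta}$ (using $3$-determinacy of $D_4$) and then solve the node equations by hand, in the style of Claims~\ref{two_and_three_node_claim} and~\ref{three_nodes_A5_claim}. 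Either way, this local computation --- not the globalization, which is routine and identical to the corresponding steps for the earlier claims --- is where the real work lies.
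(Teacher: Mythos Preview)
Your local argument is correct and is a genuinely different, more geometric route than the paper's. The paper does not build a specific deformation; instead it takes an arbitrary nearby $f_t$ (with $f_{t_{20}},f_{t_{11}},f_{t_{02}}$ small but free), introduces the Euler--type combination $g_t := x f_{t_x}+y f_{t_y}-2f_t$, which kills the quadratic part while keeping the cubic $\tfrac{f_{t_{30}}}{6}(x-A_1y)(x-A_2y)(x-A_3y)$, and then solves the node equations branch by branch. This produces explicit closed formulas for $f_{t_{20}},f_{t_{11}},f_{t_{02}}$ and for the two node locations in terms of a single parameter $u$ (equation \eqref{d4_three_nodes_soln}), and shows there are exactly six branches. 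Your translation--of--lines construction establishes existence more transparently; what it does \emph{not} give is this parametrization of \emph{all} nearby three--node configurations. That parametrization is not optional: the paper uses it immediately afterwards to compute the contribution $18 = 6\ \text{branches} \times 3$ to the Euler class, by plugging \eqref{d4_three_nodes_soln} back into $g_t$ and reading off the order of vanishing. So your argument proves the claim as stated but would leave the multiplicity step stranded; the ``alternative'' you mention at the end (solve the node equations by hand from a normal form) \emph{is} the paper's method, and is in fact forced on you by what comes next.

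Your globalization step has a genuine error. Your $f_t = f + (C_t-C)+L$ already has degree $d$, so $\tilde f_t = f_t\cdot\prod_i\big((x-x_i)^2+(y-y_i)^2\big)$ has degree $d+2(\delta-3)$, and moreover $\tilde f_t \to f\cdot\prod_i(\cdots)\neq f$ as $t\to 0$, so the tuple does not converge to $([f],[\eta],\dots)$. The analogy with Claim~\ref{two_and_three_node_claim} breaks down because there the paper multiplies a freshly built \emph{low--degree} curve $g$ by the node factors, not the degree--$d$ perturbation of $f$. The correct fix is not to multiply $f_t$, but to replace the global perturbation $(C_t-C)+L$ (a polynomial of degree $\le 2$) by a combination of the polynomials $\rho_{00},\rho_{10},\rho_{01},\rho_{20},\rho_{11},\rho_{02}$ from the transversality proof having the same $2$--jet at $q_\delta$; those vanish to order two at $q_1,\dots,q_{\delta-3}$ by construction, so the nodes there are preserved and the resulting curve still has degree $\le d$ and still converges to $f$. (In the paper's actual application one has $\delta=3$ and there are no extra nodes to worry about, so this point is moot there; but as written your step fails for $\delta>3$.)
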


\begin{rem}
We note that claim \ref{three_node_claim_d4} 
implies that the right hand side of \cref{three_node_cap_w2} is a subset of the left hand side. 
\end{rem}

\noindent \textbf{Proof:} Following the setup of the proof of claim \ref{two_and_three_node_claim}, we  
will now work in an affine chart, where we send the plane 
$\mathbb{P}^2_{\eta_t}$ to $\mathbb{C}^2_{z}$ and the 
point $q_{\delta}(t) \in \mathbb{P}^2_{\eta_t}$ to 
$(0,0,0) \in \mathbb{C}^2_{z}$. 
Using this  chart, 
let us write down the Taylor expansion of $f_t$ around the point $(0,0)$, namely  
\begin{align}
f_t(x,y)&= \frac{f_{t_{20}}}{2} x^2 + f_{t_{11}}xy + \frac{f_{t_{02}}}{2} y^2 + \frac{f_{t_{30}}}{6} x^3 + \frac{f_{t_{21}}}{2} x^2y + 
\frac{f_{t_{12}}}{2} x y^2 + \frac{f_{t_{03}}}{6} y^3+\ldots 
\label{eqf_t}
\end{align}
Since $([f], [\eta], q_{\delta}) \in D_4$, we conclude that $f_{t_{20}}, f_{t_{11}}$ and $f_{t_{02}}$
are all small (close to zero). Let us now construct solutions to \cref{D4_nhbd_eqn}. Let us assume that the points
$q_{\delta-1}(t)$ and $q_{\delta-2}(t)$ are sent to $(x_1, y_1, 0)$ and $(x_2, y_2, 0)$ under the affine chart we are considering. 
Hence, constructing solutions to \cref{D4_nhbd_eqn} is same as constructing solutions to the set of equations 
\begin{align}
f_t(x_1, y_1)&=0, \qquad f_{t_x}(x_1, y_1)=0, \qquad f_{t_{y}}(x_1, y_1) =0 \qquad \textnormal{and} \label{eq1} \\ 
f_t(x_2, y_2)&=0, \qquad f_{t_x}(x_2, y_2)=0, \qquad f_{t_{y}}(x_2, y_2) =0, \label{eq2}
\end{align}
where $(0,0), (x_1, y_1)$ and $(x_2, y_2)$ are all distinct (but close to each other).\\
\hf \hf Next, let us define 
\begin{align}
g_t(x,y) & := x f_{t_{x}}(x,y) + y f_{t_{y}}(x,y)- 2 f_t(x,y). \label{g_defn}
\end{align}
The quantity $g(x,y)$ is similarly defined with $f_t$ replaced by $f$.
We note that solving \cref{eq1} and \cref{eq2} is equivalent to solving 
\begin{align}
g_t(x_1, y_1)&=0, \qquad f_{t_x}(x_1, y_1)=0, \qquad f_{t_{y}}(x_1, y_1) =0 \qquad \textnormal{and} \label{eq3} \\ 
g_t(x_2, y_2)&=0, \qquad f_{t_x}(x_2, y_2)=0, \qquad f_{t_{y}}(x_2, y_2) =0, \label{eq4}
\end{align}
where $(0,0), (x_1, y_1)$ and $(x_2, y_2)$ are all distinct (but close to each other). 
We now note that $g_t(x,y)$ and $f_t(x,y)$ have exactly the same cubic term in the Taylor expansion. Furthermore, $g_t(x,y)$ 
has no quadratic term. \\
\hf \hf Let us now study the cubic term of the Taylor expansion of $f$ carefully. 
Let us assume first $f_{{30}} \neq 0$. 
Since $([f], [\eta], q) \in D_4$, we conclude that the cubic term factors into three \textbf{distinct} linear factors. 
Hence, the cubic term can be written as 
\begin{align}
\frac{f_{30}}{6} (x-A_1(0) y) (x-A_2(0)y) (x-A_3(0) y), \label{cubic_defn_0}
\end{align}
where $A_1(0), A_2(0)$ and $A_3(0)$ are all distinct. 
Note that $A_1(0), A_2(0)$ and $A_3(0)$ are explicit expressions involving the coefficients 
$f_{ij}$. If $f_{{30}}=0$, then the cubic term will be of the type 
\begin{align*}
\frac{f_{{21}}}{2}y(x-A_1(0) y)(x-A_2(0) y),
\end{align*}
where $A_1(0)$ and $A_2(0)$ are distinct and $f_{{21}}$ is nonzero. We will assume that 
$f_{{30}} \neq 0$; the case $f_{{30}}=0$ can be dealt with similarly. Hence, $g_t$ 
(or equivalently $f_t$) can be written as 
\begin{align}
g_t(x,y)& = \frac{f_{t_{30}}}{6} (x-A_1 y) (x-A_2 y) (x-A_3 y) + O(|(x,y)|^4), \label{cubic_defn_0}
\end{align}
where $A_i$ are the same as $A_i(0)$, but with the $ f_{ij}$ replaced by $ f_{t_{ij}}$. For notational simplicity, we 
denoted these quantities by the letter $A_i$ and not $A_i(t)$. \\ 
\hf \hf Let us now make a change of coordinates 
\begin{align}
x & := \hat{x} + O(|(\hat{x}, \hat{y})|^2) \qquad \textnormal{and} \qquad y:= \hat{y} + O(|(\hat{x}, \hat{y})|^2),  \label{x_y_hat}
\end{align}
such that 
\begin{align}
g_t& = \frac{f_{t_{30}}}{6} (\hat{x}-A_1 \hat{y}) (\hat{x}-A_2\hat{y}) (\hat{x}-A_3 \hat{y}). \label{g_hat}
\end{align}
Hence, $g_t=0$ has three distinct solutions, given by $\hat{x} = A_i \hat{y}$
for $i=1,2$ and $3$. Converting back in terms of $x$, we conclude that the solutions to $g_t(x,y)=0$ (where $(x,y)$ is small but nonzero) 
are given by 
\begin{align}
y&= u \qquad \textnormal{and} \qquad x = A_i u + E_i(u),  \label{g_soln_2}
\end{align}
where $E_i(u)$ is a second order term in $u$  (and $u$ is small but nonzero).\\  
\hf \hf Next, for notational simplicity 
we will denote $f_{t_{02}}$ by the letter $w$. 
Let us consider the solution $y:= u$ and $x = A_1 u + E_1(u)$ of the equation $g_t(x,y)=0$.  
Plugging this in 
$f_{t_{x}}(x, y) =0$ and $f_{t_{y}}(x, y) =0$ 
and solving for $A_1 f_{t_{11}}$ and $A_1^2 f_{t_{20}}$, we conclude that
\begin{align}
A_1 f_{t_{11}} &= \frac{A_1f_{t_{30}}}{6} (A_1-A_2)(A_1-A_3)u - w + O(|(u,w)|^2) \qquad \textnormal{and} 
\nonumber \\ 
A_1^2 f_{t_{20}} & = -\frac{A_1 f_{t_{30}}}{3} (A_1-A_2)(A_1-A_3)u + w + O(|(u,w)|^2). \label{3_node_sol_1}
\end{align}
Let us now consider a second solution to $g_t(x,y)=0$ (where $(x,y)$ is small but nonzero). 
This will be given by 
$y:= v$ and $x:= A_2 v+ E_2(v)$, where  $v$ is small but nonzero (or the analogous thing with $A_2$ replaced by $A_3$). 
Using \cref{3_node_sol_1} to express the values of 
$f_{t_{11}}$ and $f_{t_{20}}$ in terms of $u$ and $w$ and then  
using 
$f_{t_{x}}(x, y) =0$, we conclude that 
\begin{align}
w & = \frac{f_{t_{30}}}{6} \Big(A_1^3 -2 A_1^2 A_2-A_1^2 A_3 +2 A_1
   A_2 A_3\Big)u + \frac{f_{t_{30}}}{6}\Big(A_1^2 A_3- A_1^2 A_2\Big)v  + O(|(u,w)|^2). \label{w1}
\end{align}
Similarly, using 
\cref{3_node_sol_1} to express the values of 
$f_{t_{11}}$ and $f_{t_{20}}$ in terms of $u$ and $w$ and then  
using 
$f_{t_{y}}(x, y) =0$, we conclude that 
\begin{align}
w & = \frac{f_{t_{30}}}{6}\Big(-A_1^2 A_2 + A_1 A_2 A_3 \Big)u + 
\frac{f_{t_{30}}}{6}\Big(-A_1 A_2^2 + A_1 A_2 A_3\Big)v + O(|(u,w)|^2). \label{w2}
\end{align} 
Equating the right hand sides of \cref{w1} and \cref{w2}, we conclude that 
\begin{align}
\frac{f_{t_{30}}}{6} A_1 (A_1-A_2)(A_1-A_3)u -\frac{f_{t_{30}}}{6} A_1(A_1-A_2)(A_2-A_3)v + O(|(u,v,w)|^2) & =  0. \label{Q}
\end{align}
From \cref{Q}, we can further conclude that 
\begin{align}
A_1 v & = \Big(\frac{A_1-A_3}{A_2-A_3}\Big)(A_1 u) + O(|(u,w)|^2). \label{v_value}
\end{align}
Finally, substituting the value for $v$ from \cref{v_value} into $w$ in \cref{w1}, we get that 
\begin{align}
w &= -\frac{f_{t_{30}}}{3} A_1 A_2 (A_1 - A_3) u + O(|(u,w)|^2)  
~~ \implies ~~w = -\frac{f_{t_{30}}}{3} A_1 A_2 (A_1 - A_3) u + O(|u|^2). \label{w_value_final}
\end{align}
Plugging the value of $w$ from \cref{w_value_final} 
in \cref{3_node_sol_1}, we conclude that 
\begin{align*}
f_{t_{11}} &= \frac{f_{t_{30}}}{6}(A_1 + A_2) (A_1 - A_3) u + O(|u|^2) \qquad \textnormal{and} \qquad 
f_{t_{20}}  = -\frac{f_{t_{30}}}{3} (A_1 - A_3) u + O(|u|^2). 
\end{align*}
Hence, solutions to \cref{eq1} and \cref{eq2} exist, given by 
\begin{align}
(x_1, y_1) & = (A_1 u + E_1(u), u), \qquad (x_2, y_2) = \Big(A_2\frac{(A_1-A_3)}{(A_2-A_3)}u + E_2(u), ~\frac{(A_1-A_3)}{(A_2-A_3)}u + E_4(u)\Big),
\nonumber \\
f_{t_{11}} &= \frac{f_{t_{30}}}{6}(A_1 + A_2) (A_1 - A_3) u + E_5(u), \qquad 
f_{t_{20}} = -\frac{f_{t_{30}}}{3} (A_1 - A_3) u + E_6(u) \qquad \textnormal{and} \nonumber \\ 
f_{t_{02}} & = -\frac{f_{t_{30}}}{3} A_1 A_2 (A_1 - A_3) u + E_7(u), \label{d4_three_nodes_soln}
\end{align}
where $u$ is small and nonzero and the 
$E_i$ are all second order terms. Furthermore, there are \textbf{exactly} $6$ distinct solutions, that corresponds to 
$(A_1, A_2)$ being replaced with $(A_i, A_j)$, where the $(A_i, A_j)$ are ordered 
(or alternatively, we can think of this this way; the $(A_i, A_j)$ is unordered as far as the construction of $f_t$ is concerned, but 
we can permute the values of $(x_1, y_1)$ and $(x_2, y_2)$). 
 This proves \cref{three_node_claim_d4} and hence proves  
\cref{three_node_triple_point}. \qed \\ 
\hf \hf Let us now justify the multiplicity. We claim that each point of 
$(A_1^{\delta-3}\circ D_4)\cap \mu$ 
contributes $18$ to the Euler  class in \cref{euler_na1_delta}. 
As we just explained, there are exactly $6$ distinct solutions to 
\cref{eq1} and \cref{eq2}; we will call each distinct solution of 
\cref{eq1} and \cref{eq2} a 
\textbf{branch} of a neighborhood of $A_1^{\delta-3}\circ D_4$ inside $\overline{A_1^{\delta}}$. 
Since there are $6$ branches, it suffices to show that the multiplicity from each branch is $3$ 
(in which case the total contribution to the Euler  class will be $18$). Let us now compute the multiplicity from each branch.\\  
\hf \hf Let us consider the branch given by \cref{d4_three_nodes_soln}. 
The multiplicity from this branch 
is the number of small solutions 
$(x, y, u)$
to the 
following set of equations 
\begin{align}
f_t(x,y)& = \varepsilon_0, \qquad f_{t_x}(x,y) = \varepsilon_1 \qquad \textnormal{and} \qquad f_{t_y}(x,y) = \varepsilon_2 \label{epp12}
\end{align}
where $(\varepsilon_0, \varepsilon_1, \varepsilon_2) \in \mathbb{C}^3$ is small and generic and $f_{t_{20}}, f_{t_{11}}$ 
and $f_{t_{02}}$ are as given in \cref{d4_three_nodes_soln}. We claim that we can set $\varepsilon_1$ and 
$\varepsilon_2$ to be zero; this is justified in section \ref{local_degree}. Hence, we need to find the number of small 
solutions $(x,y,u)$ to the set of equations 
\begin{align*}
f_t(x,y)& = \varepsilon_0, \qquad f_{t_x}(x,y) = 0 \qquad \textnormal{and} \qquad f_{t_y}(x,y) = 0. 
\end{align*}
This is same as the number of small solutions $(x,y,u)$ to the set of equations 
\begin{align}
g_t(x,y)& = -2\varepsilon_0, \label{g_only}\\ 
f_{t_x}(x,y) & = 0 \qquad \textnormal{and} \qquad f_{t_y}(x,y) = 0, \label{g_eqn_solve}
\end{align}
where $g_t(x,y)$ is as defined in \cref{g_defn}. 
Let us start by solving only the two equations in \cref{g_eqn_solve}. 
Plugging in the values for $f_{t_{20}}, f_{t_{11}}$ 
and $f_{t_{02}}$ as given in \cref{d4_three_nodes_soln}  
and solving the equation $f_{t_x}(x,y)=0$, we conclude that 
\begin{align}
\Big(-2 x + (A_1 + A_2) y\Big) \Big(u+ O(|u|^2)\Big) & = 
\frac{(3 x^2 - 2(A_1+A_2+A_3)x y + (A_1 A_2 + A_1 A_3 + A_2 A_3) y^2)}{(A_3-A_1)} \nonumber \\ 
  & ~~ + O(|(x,y)|^3). \label{ou}
\end{align}
Similarly, plugging in the values for $f_{t_{20}}, f_{t_{11}}$ 
and $f_{t_{02}}$ as given in \cref{d4_three_nodes_soln}  
and solving the equation $f_{t_y}(x,y)=0$, we conclude that 
\begin{align}
\Big( (A_1+A_2)x -2A_1 A_2 y \Big) \Big(u+ O(|u|^2)\Big) & = -\frac{((A_1+A_2+A_3)x^2 -2(A_1 A_2 + A_1 A_3 + A_2 A_3)xy 
+3A_1A_2 A_3 y^2)}{(A_3-A_1)} \nonumber \\ 
& ~~ + O(|(x,y)|^3). \label{ou3}
\end{align}
Multiplying \cref{ou} by $(A_1+A_2)x -2A_1 A_2 y$ and multiplying \cref{ou3} by 
$(-2 x + (A_1 + A_2) y)$, we conclude that
\begin{align}
\Big(x -A_1 y\Big) \Big(x - A_2 y\Big)\Big( (A_1+A_2-2A_3)x -(2A_1 A_2-A_1A_3 - A_2 A_3) y \Big)+
O(|x,y|^4) & = u^2 O(|(x,y)|^2). \label{x_A_y_soln}
\end{align}
Let us now solve \cref{x_A_y_soln}. Let us make a change of coordinates 
\begin{align*}
x & = \hat{x} + O(|(\hat{x}, \hat{y})|^2) \qquad \textnormal{and} \qquad y = \hat{y} + O(|(\hat{x}, \hat{y})|^2) 
\end{align*}
such that \cref{x_A_y_soln} can be rewritten as 
\begin{align}
\Big(\hat{x} -A_1 \hat{y}\Big) \Big(\hat{x} - A_2 \hat{y}\Big)\Big( (A_1+A_2-2A_3)\hat{x} -(2A_1 A_2-A_1A_3 - A_2 A_3) \hat{y} \Big) & =  u^2 O(|(\hat{x}, \hat{y})|^2) \label{x_A_y_soln_hat}
\end{align}
Using \cref{x_A_y_soln_hat}, we solve for $\hat{x}$ in terms for $\hat{y}$ and $u$ and convert back to $x$ and $y$ to conclude that the 
only possible solutions are given by 
\begin{align}
x & = A_1 y + E_8(y,u)\qquad \textnormal{or} \qquad x = A_2 y + E_9(y,u) \qquad \textnormal{or} \nonumber \\ 
(A_1+A_2-2A_3)x &= (2A_1 A_2-A_1A_3 - A_2 A_3) y + E_{10}(y,u), \label{xy_soln}
\end{align} 
such that $E_i(y,0) = O(|y|^2)$, for $i=8,9$ and $10$. 
Plugging the three solutions obtained in \cref{xy_soln} into \cref{ou}, solving for $y$ in terms of $u$ 
and then plugging that back into \cref{xy_soln} to express $x$ in terms of $u$, 
we conclude that the only possible solutions to \cref{g_eqn_solve}
are given by 
\begin{align}
(x, y)& = \Big(A_1u + \widetilde{E}_1(u), u+ \widehat{E}_1(u) \Big) \qquad \textnormal{or} \label{sol1}\\ 
(x,y) & = 
\Big(A_2\Big(\frac{A_1-A_3}{A_2-A_3}\Big)u + \widetilde{E}_2(u), \Big(\frac{A_1-A_3}{A_2-A_3}\Big)u+\widehat{E}_2(u)\Big) 
\qquad \textnormal{or} \label{sol2} \\ 
(x,y) & = \Big( \frac{(2A_1 A_2-A_1 A_3-A_2 A_3)}{3(A_2-A_3)} u + \widetilde{E}_3(u) , \frac{(A_1+A_2-2A_3)}{3(A_2-A_3)} u + 
\widehat{E}_3(u)\Big), \label{sol3}
\end{align}
where $\widetilde{E}_i(u)$ and $\widehat{E}_i(u)$ are second order terms (for $i=1,2$ and $3$). From 
\cref{d4_three_nodes_soln}, we conclude that the solutions  in \cref{sol1} and \cref{sol2} with 
$\widetilde{E}_i(u)$ replaced by $E_i(u)$ and $\widehat{E}_i(u)$ replaced by $0$ (for $i=1$ and $2$) 
is a solution to \cref{g_eqn_solve}. 
Since the solutions in \cref{sol1} and \cref{sol2} are the \textbf{only} solutions to \cref{g_eqn_solve}, we conclude that 
$\widetilde{E}_i(u) = E_i(u)$ and $\widehat{E}_i(u) =0$ (for $i=1$ and $2$). Hence, if we plug the solutions obtained 
from \cref{sol1} and \cref{sol2} into $f_t(x,y)$ (or equivalently $g_t(x,y)$), we will get $0$ and not $\varepsilon_0$. 
Hence, we reject the solutions given by \cref{sol1} and \cref{sol2}.  \\ 
\hf \hf It remains to consider the solution given by \cref{sol3}. 
Plugging in the expression for $x$ and $y$ from \cref{sol3} into $g_t(x,y)$ gives us 
\begin{align}
g_t(x,y) & = \Big(\frac{(A_2-A_1)^2 (A_3-A_1)^2}{162 (A_2 - A_3)}\Big) u^3 + O(u^4). \label{g_3_mult}
\end{align}
From \cref{g_3_mult}, we conclude that 
$g_t(x,y) = -2\varepsilon_0$ has $3$ solutions. This justifies the multiplicity and 
concludes the proof of \cref{na1_delta}. \qed 


\subsubsection{Local degree of a smooth map} 
\label{local_degree}
It remains to show why we could set $\varepsilon_2$ to be $0$ in \cref{ep2} and set 
$(\varepsilon_1, \varepsilon_2)$ to be $(0,0)$ in \cref{epp12}. Let us first recall 
the definition of the local degree of a smooth map around a given point. We will follow the discussion and theory 
developed in \cite{Kes}.\\ 
\hf \hf Let us begin with the proposition 2.1.2  of \cite{Kes}. The statement is as follows:
\begin{prp}\label{local_degree_kes}
Let $f \in C^2(\bar{\Omega}, \mathbb{R}^n)$ where $\Omega$ is an open subset of $\mathbb{R}^n$ and let $b \notin f(\partial \Omega)$. Let $\rho_0$ be the distance between $b$ and $f(\partial \Omega)$ with $\rho_0>0$. Let $b_1,b_2 \in B(b;\rho_0),$ the ball of radius $\rho_0$ with center $b$. If $b_1,b_2$ both are regular values of $f$, then $\textnormal{deg}(f,\Omega,b_1)=\textnormal{deg}(f,\Omega,b_2)$ where $\textnormal{deg}(f,\Omega,y)$ represent the degree of $f$ at $y$ (i.e. the number of solutions to the equation $f(x)=y$ in $\Omega$). 
\end{prp}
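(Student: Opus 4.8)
The plan is to prove \Cref{local_degree_kes} via the integral representation of the degree, following the development in \cite{Kes}. Recall that for $f \in C^2(\bar\Omega,\mathbb{R}^n)$ and $b \notin f(\partial\Omega)$ one sets
\[ \textnormal{deg}(f,\Omega,b) \;:=\; \int_\Omega \varphi_\varepsilon\big(f(x)-b\big)\, J_f(x)\, dx, \]
where $J_f := \det Df$, $\varphi_\varepsilon$ is a smooth nonnegative mollifier supported in $B(0;\varepsilon)$ with $\int_{\mathbb{R}^n}\varphi_\varepsilon = 1$, and $0<\varepsilon<\operatorname{dist}\big(b, f(\partial\Omega)\big)$; a preliminary lemma shows this does not depend on such $\varepsilon$ and, when $b$ is a regular value, equals the signed count $\sum_{x\in f^{-1}(b)}\operatorname{sgn} J_f(x)$ — which for a holomorphic $f$ is literally the number of preimages. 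Thus it suffices to show that the displayed integral takes the same value at $b=b_1$ and at $b=b_2$.

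The heart of the matter is the following exactness (``mass transport'') lemma: \emph{if $\omega$ is a $C^1$, compactly supported $n$-form on $\mathbb{R}^n$ whose support lies in an open ball $B$ and whose total integral is zero, then $\omega = d\eta$ for some $C^1$, compactly supported $(n-1)$-form $\eta$ with $\operatorname{supp}\eta \subset B$.} I expect this to be the main obstacle; once it is available, everything after is bookkeeping. Its proof is the classical one of Milnor/Nirenberg: reduce to the case where $B$ is a cube, and build $\eta$ one coordinate at a time, at each step subtracting off the relevant one-variable marginal of the current form so that the remainder integrates to zero in that variable and hence admits a compactly supported antiderivative there; connectedness of the cube is exactly what forces the successive marginals to integrate to zero.

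Granting the lemma, choose mollifiers $\varphi^{(1)},\varphi^{(2)}$ supported in small balls around $b_1$ and $b_2$, each of integral $1$, with both supports lying inside the \emph{convex} set $B := B(b;\rho_0)$. Then $\omega := \big(\varphi^{(1)}(y)-\varphi^{(2)}(y)\big)\, dy_1\wedge\cdots\wedge dy_n$ is compactly supported in $B$ with $\int_{\mathbb{R}^n}\omega = 0$, so $\omega = d\eta$ with $\operatorname{supp}\eta \subset B$. Pulling back by $f$ and using $f^*\circ d = d\circ f^*$ together with $f^*\big(\varphi(y)\,dy\big) = \varphi(f(x))\,J_f(x)\,dx$, we obtain
\[ \textnormal{deg}(f,\Omega,b_1) - \textnormal{deg}(f,\Omega,b_2) \;=\; \int_\Omega f^*\omega \;=\; \int_\Omega f^*(d\eta) \;=\; \int_\Omega d\big(f^*\eta\big). \]
Here $f^*\eta$ is $C^1$ — this is where the $C^2$ hypothesis on $f$ is used — and its support lies in $f^{-1}(\operatorname{supp}\eta) \subseteq f^{-1}(B)$, which is closed in the compact set $\bar\Omega$ and, since $B\cap f(\partial\Omega)=\emptyset$, disjoint from $\partial\Omega$; hence $\operatorname{supp}(f^*\eta)$ is a compact subset of the open set $\Omega$. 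This is the only point at which the hypothesis $b_1,b_2\in B(b;\rho_0)$ with $\rho_0=\operatorname{dist}(b,f(\partial\Omega))$ enters, and it is essential. By Stokes' theorem $\int_\Omega d(f^*\eta) = \int_{\partial\Omega} f^*\eta = 0$, which yields $\textnormal{deg}(f,\Omega,b_1)=\textnormal{deg}(f,\Omega,b_2)$.

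Finally, I would note why the naive route does not suffice on its own: from the inverse function theorem together with compactness of $f^{-1}(b)$ it is easy to see that $b\mapsto \textnormal{deg}(f,\Omega,b)$ is locally constant near every regular value; but the regular values lying inside $B(b;\rho_0)$ need not form a connected set — deleting the closed, Lebesgue-null set of critical values can disconnect an open subset of $\mathbb{R}^n$ (much as a hyperplane disconnects $\mathbb{R}^n$) — so one genuinely needs an argument, such as the one above, that survives crossing critical values.
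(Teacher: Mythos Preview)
Your proof is correct and follows the standard integral-representation approach to Brouwer degree (as in Kesavan, Nirenberg, or Milnor). Note, however, that the paper does not itself prove this proposition: it is quoted verbatim as Proposition~2.1.2 of \cite{Kes} and then applied. So there is no ``paper's own proof'' to compare against; you have supplied exactly the argument that the cited reference contains, and the paper simply takes it for granted.

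One small remark worth making explicit: the proposition as stated in the paper defines $\deg(f,\Omega,y)$ as ``the number of solutions,'' but the object your argument actually shows to be locally constant is the \emph{signed} count $\sum_{x\in f^{-1}(y)}\operatorname{sgn} J_f(x)$. You correctly observe that for holomorphic maps these coincide since the real Jacobian is nonnegative; since every application in the paper is to holomorphic $\varphi$, this is harmless, but it is the reason the literal statement works in context.
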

\hf \hf Let us first justify the assertion for \cref{ep2}. Let 
$U$ be 
an open ball in $\mathbb{C}^2$ with center $(0,0)$ and radius $r$, where $r$ is sufficiently small and positive real number.
Consider the map 
$\varphi: U \longrightarrow \mathbb{C}^2$, given by 
\begin{align*}
\varphi(x,h)& = (\varphi_{1}(x,h), \varphi_2(x,h))\\ 
&:= \Big(\frac{(x^2 h^2)\Big( \mathcal{B}_4^{f_t} + O(|(x,h)|) \Big)}{24},  
~~\frac{(xh)\Big(\mathcal{B}_4^{f_t}h - \mathcal{B}_4^{f_t}x + O(|(x,h)|^2)\Big)}{12}\Big).
\end{align*}\\
Before proceeding, let us first prove the following claim:
\begin{claim}
If $\varepsilon \neq 0,$ then the point $(\varepsilon,0)$ is a regular value of $\varphi$.
\end{claim}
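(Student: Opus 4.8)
The plan is to show that the (complex) differential $D\varphi$ is invertible at every point of $\varphi^{-1}(\varepsilon,0)$. Since $\varphi$ is holomorphic (it is built from polynomials and the holomorphic function $\mathcal R$), its real Jacobian determinant equals $|\det_{\mathbb C}D\varphi|^2$, so surjectivity of the real differential is equivalent to invertibility of the complex one. If $\varphi^{-1}(\varepsilon,0)=\emptyset$ the claim holds vacuously, so we may assume $(x,h)\in U$ with $\varphi(x,h)=(\varepsilon,0)$.

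First I would record the structure of such a preimage. From $\varphi_1(x,h)=\varepsilon\neq 0$ we get $x\neq 0$ and $h\neq 0$. Dividing the equation $\varphi_2(x,h)=0$ by $xh$ and using that $\mathcal{B}_4^{f_t}\neq 0$ (this is the hypothesis $([f],[\eta],q_\delta)\in A_3$ recorded above), we obtain $\mathcal{B}_4^{f_t}(h-x)+O(|(x,h)|^2)=0$, hence $h=x\bigl(1+O(|(x,h)|)\bigr)$. Combined with $\varphi_1(x,h)=\varepsilon$ this forces $|x|\asymp|h|\asymp|(x,h)|$, and moreover $x+h=2x+O(|(x,h)|^2)$, so $|x+h|\asymp|(x,h)|$ as well, once $r$ is small.

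Next I would differentiate, using \eqref{ep3}–\eqref{ep2}. Write $\varphi_1=\tfrac{x^2h^2}{24}\bigl(\mathcal{B}_4^{f_t}+O(|(x,h)|)\bigr)$ and $\varphi_2=\tfrac{xh}{12}P$ with $P:=\mathcal{B}_4^{f_t}(h-x)+O(|(x,h)|^2)$. Then $\partial_x\varphi_1=\tfrac{xh^2\mathcal{B}_4^{f_t}}{12}+O(|(x,h)|^4)$ and $\partial_h\varphi_1=\tfrac{x^2h\mathcal{B}_4^{f_t}}{12}+O(|(x,h)|^4)$, while, using $P=0$ at the preimage, $\partial_x\varphi_2=\tfrac{xh}{12}\bigl(-\mathcal{B}_4^{f_t}+O(|(x,h)|)\bigr)$ and $\partial_h\varphi_2=\tfrac{xh}{12}\bigl(\mathcal{B}_4^{f_t}+O(|(x,h)|)\bigr)$. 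Multiplying out,
\[
\det D\varphi=\partial_x\varphi_1\,\partial_h\varphi_2-\partial_h\varphi_1\,\partial_x\varphi_2=\frac{(\mathcal{B}_4^{f_t})^2}{144}\,x^2h^2(x+h)+O(|(x,h)|^6).
\]
The leading term is homogeneous of degree $5$ and nonzero, since $\mathcal{B}_4^{f_t}\neq 0$, $x\neq 0$, $h\neq 0$ and $x+h\neq 0$; by the first step $|x|,|h|,|x+h|\asymp|(x,h)|$, so shrinking $r$ makes this leading term dominate the order-$\geq 6$ remainder. Hence $\det D\varphi\neq 0$ at every point of $\varphi^{-1}(\varepsilon,0)$, which is exactly the assertion that $(\varepsilon,0)$ is a regular value of $\varphi$.

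The only delicate point is the bookkeeping of the $O$-terms: one must check that the two cross products combine additively to give $x^2h^2(x+h)$ rather than cancelling, and that every contribution discarded into $O(|(x,h)|^6)$ is genuinely of order at least $6$, so that the degree-$5$ leading term survives on the small ball where the preimages live. This is a direct but careful computation from the Taylor data of $\hat f_t$ in \eqref{hat_f_defn}, and it is the part that requires the most attention.
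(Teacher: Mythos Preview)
Your proof is correct and follows essentially the same approach as the paper: compute the complex Jacobian determinant of $\varphi$ at a preimage of $(\varepsilon,0)$, identify its leading term as a nonzero multiple of $x^2h^2(x+h)$ (order $5$), and conclude it does not vanish since $x,h,x+h$ are all nonzero of comparable size. The only cosmetic difference is that the paper first substitutes $x=h+O(h^2)$ to reduce to one variable, obtaining $M=h^5\bigl(\tfrac{(\mathcal{B}_4^{f_t})^2}{72}+O(h)\bigr)$, and then uses the relation $h^4\bigl(\tfrac{\mathcal{B}_4^{f_t}}{24}+O(h)\bigr)=\varepsilon$ to rewrite $M$ as $\varepsilon$ times a manifestly nonzero factor, whereas you keep both variables and argue directly that the degree-$5$ term dominates the $O(|(x,h)|^6)$ remainder on the small ball; both routes are equivalent.
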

\begin{proof} Let us assume $\varphi(x, h) = (\varepsilon, 0)$. 
Using the fact that $\varphi_2(x,h) =0$, we conclude that 
\[ x(h) = h + O(h^2).\] 
Plugging in this value of $x$ in $\varphi_1(x,h)$, we conclude that 
\begin{align}
h^4\Big(\frac{\mathcal{B}^{f_t}_4}{24} + O(h) \Big) & = \varepsilon. \label{ze1}
\end{align}
Note that if $h$ is sufficiently small, then $\frac{\mathcal{B}^{f_t}_4}{24} + O(h)$ 
is nonzero, since $\mathcal{B}^{f_t}_4$ is nonzero. We also note that since $\varepsilon$ is nonzero, \cref{ze1} 
implies that $h$ is nonzero. \\  
\hf \hf Next, let us compute the determinant of the differential of $\varphi$ at $(x(h), h)$. It is given by
\begin{align}
M &:=  \textnormal{det}\begin{pmatrix}
\varphi_{1_x}& \varphi_{1_h}  \\ 
\varphi_{2_x}& \varphi_{2_h}  \\ 
\end{pmatrix}\Big|_{(x(h), h)} = h^5\Big(\frac{(\mathcal{B}_4^{f_t})^2}{72}   + O(h)\Big) \label{ze2}
\end{align}
Using \cref{ze1} and  \cref{ze2}, we conclude that 
\begin{align}
M & = h^4 \cdot h\Big(\frac{(\mathcal{B}_4^{f_t})^2}{72}   + O(h) \Big) \nonumber \\ 
  & = \varepsilon \frac{h\Big(\frac{(\mathcal{B}_4^{f_t})^2}{72}   + O(h) \Big)}{\Big(\frac{\mathcal{B}^{f_t}_4}{24} + O(h) \Big)}. \label{ze3} 
\end{align}
Since, $\mathcal{B}^{f_t}_4$ is nonzero,  $h$ is small and nonzero and $\varepsilon$ is nonzero, 
we conclude from \cref{ze3} that $M$ is nonzero. Hence, $(\varepsilon,0)$ is a regular value of $\varphi$.  
\end{proof}
\hf \hf Next, we note that if $S$ is a non empty subset of $\mathbb{C}^2$, then 
the distance function $d_S: \mathbb{C}^2 \longrightarrow \mathbb{R}$ 
is a continuous function. 
Hence, the set \begin{align*}
V & :=(d_{\varphi(\partial U)}-d_{X})^{-1}(0,\infty) \\
 & = \{(\varepsilon_1,\varepsilon_2)\in \mathbb{C}^2\mid d_{\varphi(\partial U)}(\varepsilon_1,\varepsilon_2) > d_{X}(\varepsilon_1,\varepsilon_2)\}
\end{align*}
 is an open subset of $\mathbb{C}^2$, where the function $d_{X}$ denotes the distance from $x$-axis.
Note that $d_{X}(\varepsilon_1,\varepsilon_2)=|\varepsilon_2|$ and this distance is achieved by taking the distance from the point $(\varepsilon_1,\varepsilon_2)$ to the point $(\varepsilon_1,0)$ on $x$-axis. \\
\hf \hf Now, we will show that $V \cap \varphi(U) \neq \emptyset.$ Note that 
$$\partial U=\{ (x,h)\in \mathbb{C}^2 : |x|^2+|h|^2=r^2\}.$$ 
Observe that $\partial U$ is compact; so $\varphi(\partial U)$ is compact and hence closed in $\mathbb{C}^2$. Hence, $d_{\varphi(\partial U)}(\varepsilon,0) = 0$ if and only if $(\varepsilon,0) \in \varphi( \partial U)$. We conclude that $(\varepsilon,0) \in V$ if and only if $(\varepsilon,0) \notin \varphi( \partial U).$ Now, let $(\varepsilon,0) \in \varphi( \partial U)$. Let us assume $\varphi(x,h)=(\varepsilon,0)$ with $|x|^2+|h|^2=r^2$ and $\varepsilon \neq 0$. We conclude from $\varphi_2(x,h)=0$ and \cref{ze1} that 
\begin{align*}
x(h) & = h + O(h^2) \qquad  \textnormal{and} \qquad 
h^4\Big(\frac{\mathcal{B}^{f_t}_4}{24} + O(h) \Big) = \varepsilon.
\end{align*}
Now using the fact $|x|^2+|h|^2=r^2$, we conclude that $|\varepsilon| = \dfrac{|\mathcal{B}^{f_t}_4|}{96}r^4+O(r^5)$. Hence we get either $\varepsilon=0$ or $|\varepsilon| = \dfrac{|\mathcal{B}^{f_t}_4|}{96}r^4+O(r^5)$. 
Note that $\dfrac{|\mathcal{B}^{f_t}_4|}{96}r^4+O(r^5) \neq 0$ as 
$\mathcal{B}^{f_t}_4 \neq 0$ and $r$ is sufficiently small. 
So, $(\varepsilon,0) \in V$ for all nonzero $\varepsilon$ with  
$|\varepsilon| < \dfrac{|\mathcal{B}^{f_t}_4|}{96}r^4+O(r^5)$ (i.e. for all $|\varepsilon|$ sufficiently small). 
From \cref{h4} we concluded that the system $\varphi(x,h)=(\varepsilon,0)$ has solutions in $U$ where $\varepsilon$ is small but nonzero. Hence $(\varepsilon,0) \in V \cap \varphi(U)$ for some nonzero $\varepsilon$ with $|\varepsilon| < \dfrac{\mathcal{B}^{f_t}_4}{96}r^4+O(r^5)$. Hence, $V \cap \varphi(U)$ is non empty. \\ 
\hf \hf Next, we note that since $\varphi:U \longrightarrow \mathbb{C}^2$ is a non constant holomorphic map,
$\varphi(U)$ is an open subset of $\mathbb{C}^2$. Hence, $V \cap \varphi(U)$ is a non empty open subset of 
$\mathbb{C}^2$ and has nonzero measure. 
Using Sard's Theorem (applied to the function $\varphi:U \longrightarrow \mathbb{C}^2$), we conclude that 
$V \cap \varphi(U)$ contains regular values of $\varphi$. Let $(\varepsilon_1,\varepsilon_2) \in V \cap \varphi(U)$ be a regular value of $\varphi$. Therefore by definition of $V$, $d_{\varphi(\partial U)}(\varepsilon_1,\varepsilon_2) > |\varepsilon_2| \geq 0.$ Now, $\varphi(\partial U)$ is a closed subset of $\mathbb{C}^2$ and $d_{\varphi(\partial U)}(\varepsilon_1,\varepsilon_2) >0$ together implies that $(\varepsilon_1,\varepsilon_2) \notin \varphi( \partial U).$ Hence all the hypothesis of \cref{local_degree_kes} are satisfied. We conclude from the proposition that 
$\textnormal{deg}(\varphi,U,(\varepsilon_1,\varepsilon_2))=\textnormal{deg}(\varphi,U,(\varepsilon_1,0))$, 
i.e. the number of solutions in $U$ to both the equations $\varphi(x,h)=(\varepsilon_1,\varepsilon_2)$ and $\varphi(x,h)=(\varepsilon_1,0)$ are same. This justifies our claim in \cref{ep2}. \\
\hf \hf Let us now justify the assertion for \cref{epp12}. The argument is similar to the previous argument. We just need to prove the following claim:
\begin{claim} 
Let $U\subseteq \mathbb{C}^3$ be a small open neighborhood of $(0,0,0)$ and $\varphi: U \longrightarrow \mathbb{C}^3$ be given by
\begin{align*}
\varphi(x,y,u)&:= \Big(f_t(x,y),f_{t_x}(x,y),f_{t_y}(x,y)\Big), 
\end{align*}
where 
$f_t$ 
is as given in \cref{eqf_t} and $f_{t_{20}},f_{t_{11}},f_{t_{02}}$ are as given in \cref{d4_three_nodes_soln}. 
Let $\widehat{U} \subseteq \mathbb{C}$ be a small open neighborhood of $0$. 
If $\varepsilon$ is a generic point of $\widehat{U}$, then $(\varepsilon,0,0)$ is a regular value of $\varphi$.  
\end{claim}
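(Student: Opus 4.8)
The plan is to prove the (formally stronger) statement that $(\varepsilon,0,0)$ is a regular value of $\varphi$ for \emph{every} sufficiently small nonzero $\varepsilon$. First I would pin down the fibre $\varphi^{-1}(\varepsilon,0,0)\cap U$. By the analysis already carried out for \Cref{three_node_claim_d4}, the common zero locus of $f_{t_x}$ and $f_{t_y}$ inside a small $U$ is exactly the union of the three branches \eqref{sol1}, \eqref{sol2}, \eqref{sol3}; along \eqref{sol1} and \eqref{sol2} one has $f_t\equiv 0$, while along \eqref{sol3}, using $g_t=xf_{t_x}+yf_{t_y}-2f_t$ from \eqref{g_defn} together with \eqref{g_3_mult}, one has $f_t=-\tfrac12 g_t=-\tfrac{c}{2}u^3+O(u^4)$ with $c:=\tfrac{(A_2-A_1)^2(A_3-A_1)^2}{162(A_2-A_3)}\neq 0$ since $A_1,A_2,A_3$ are distinct. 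Hence for $0\neq\varepsilon$ small, $\varphi^{-1}(\varepsilon,0,0)\cap U$ consists of exactly the three points $p(u_i)=(x(u_i),y(u_i),u_i)$, $i=1,2,3$, where $u_1,u_2,u_3$ are the three distinct nonzero roots of $-\tfrac{c}{2}u^3+O(u^4)=\varepsilon$.

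Next I would compute $\det D\varphi$ at such a point. Since $f_t$ depends on $u$ only through the three coefficients $f_{t_{20}},f_{t_{11}},f_{t_{02}}$ of \eqref{d4_three_nodes_soln}, the Jacobian of $\varphi=(f_t,f_{t_x},f_{t_y})$ in the coordinates $(x,y,u)$ has first row $(f_{t_x},f_{t_y},\partial_u f_t)$; at $p(u)$ the first two entries vanish, so expanding along that row gives
\begin{align*}
\det D\varphi\big|_{p(u)}\;=\;(\partial_u f_t)\big|_{p(u)}\cdot\big(f_{t_{xx}}f_{t_{yy}}-f_{t_{xy}}^2\big)\big|_{p(u)},
\end{align*}
the product of $\partial_u f_t$ with the $(x,y)$-Hessian determinant of $f_t$, and it suffices to see that neither factor vanishes. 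For the first factor, restrict $\varphi_1$ to branch \eqref{sol3} and differentiate: since $f_{t_x}=f_{t_y}=0$ identically along that branch, the chain rule gives $(\partial_u f_t)|_{p(u)}=\tfrac{d}{du}\big(f_t(x(u),y(u))\big)=-\tfrac{3c}{2}u^2+O(u^3)$, which is nonzero because $c\neq 0$ and $u=u_i$ is small and nonzero.

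The remaining task — and I expect this to be the main point — is to show the $(x,y)$-Hessian of $f_t$ is nondegenerate at $p(u)$. I would argue via the Milnor number. For $u\neq 0$ small, $f_t$ is a small deformation of a germ with an ordinary triple point ($D_4$) at the origin (its $2$-jet is the one read off \eqref{d4_three_nodes_soln} at $u=0$, namely $0$, and its $3$-jet has three distinct linear factors), so on a fixed small ball about the origin it has exactly $\mu(D_4)=4$ critical points counted with multiplicity. Now $f_t$ has (at least) the four distinct critical points $(0,0),(x_1,y_1),(x_2,y_2)$ — the three singular points produced by the construction of \Cref{three_node_claim_d4}, distinct by hypothesis — together with $p(u)$, which is distinct from the other three because $f_t(p(u))=-\tfrac{c}{2}u^3+O(u^4)\neq 0$. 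Since these four distinct points already account for the full multiplicity $4$, each of them is a simple (nondegenerate) critical point; in particular the Hessian of $f_t$ at $p(u)$ is nondegenerate, so $\big(f_{t_{xx}}f_{t_{yy}}-f_{t_{xy}}^2\big)|_{p(u)}\neq 0$. (As an a posteriori check, $(0,0),(x_1,y_1),(x_2,y_2)$ are then seen to be ordinary nodes; e.g.\ at $(0,0)$ one computes $f_{t_{20}}f_{t_{02}}-f_{t_{11}}^2=-\tfrac{1}{36}f_{t_{30}}^2(A_1-A_2)^2(A_1-A_3)^2 u^2+O(u^3)\neq 0$. Alternatively, one can substitute \eqref{sol3} and \eqref{d4_three_nodes_soln} directly into the Hessian determinant and verify its leading $u^2$-coefficient is a nonvanishing rational function of $A_1,A_2,A_3$ and the cubic coefficients of $f_t$; the Milnor-number argument avoids this computation.)

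Combining the two factors, $\det D\varphi|_{p(u_i)}\neq 0$ for $i=1,2,3$, so $(\varepsilon,0,0)$ is a regular value of $\varphi$ for every small nonzero $\varepsilon$, and in particular for a generic $\varepsilon\in\widehat U$. Granting the claim, the justification that one may set $(\varepsilon_1,\varepsilon_2)=(0,0)$ in \eqref{epp12} is then completed exactly as for \eqref{ep2}: one fixes the ball $U$, observes that $\varphi(\partial U)$ is compact and meets the line $\{(\varepsilon,0,0)\}$ in a discrete set (so $(\varepsilon,0,0)\notin\varphi(\partial U)$ for generic small $\varepsilon$), picks a generic regular value $(\varepsilon_1,\varepsilon_2,\varepsilon_3)\in\varphi(U)$ close to $(\varepsilon,0,0)$ by Sard's theorem, and invokes \Cref{local_degree_kes} to conclude $\textnormal{deg}(\varphi,U,(\varepsilon,0,0))=\textnormal{deg}(\varphi,U,(\varepsilon_1,\varepsilon_2,\varepsilon_3))$.
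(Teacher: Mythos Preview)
Your proof is correct and follows the same skeleton as the paper's: at a point of $\varphi^{-1}(\varepsilon,0,0)$ the first row of $D\varphi$ is $(f_{t_x},f_{t_y},\partial_u f_t)=(0,0,\partial_u f_t)$, so $\det D\varphi$ factors as $(\partial_u f_t)\cdot(f_{t_{xx}}f_{t_{yy}}-f_{t_{xy}}^2)$, and one then shows each factor is nonzero. Your treatment of the first factor (chain rule along branch \eqref{sol3}, reducing to $\tfrac{d}{du}\big[-\tfrac12 g_t\big]$ via \eqref{g_defn} and \eqref{g_3_mult}) is exactly the paper's.

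The genuine difference is in the Hessian factor. The paper argues by continuity: $f_t$ has an $A_1$-singularity at $(0,0)$, so its Hessian determinant is nonzero there, and since $p(u)$ is close to the origin it remains nonzero. You instead invoke conservation of Milnor number: the central $D_4$ germ has $\mu=4$, and for small $u\neq 0$ the deformation $f_t$ exhibits the four distinct critical points $(0,0),(x_1,y_1),(x_2,y_2),p(u)$ (the last separated from the other three by the value of $f_t$), so each must be Morse. Your route is the more robust one here: the paper's continuity step is delicate because the Hessian determinant at $(0,0)$ is itself only of order $u^2$ (from \eqref{d4_three_nodes_soln}), while $p(u)$ sits at distance of order $u$ from the origin, so the change in the Hessian entries is of order $u$ and the change in the determinant is again of order $u^2$ --- continuity alone does not immediately rule out cancellation without the direct substitution you sketch as your ``alternative''. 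The Milnor-number count bypasses this entirely. As a byproduct you obtain the slightly sharper conclusion that $(\varepsilon,0,0)$ is regular for \emph{every} small nonzero $\varepsilon$, whereas the paper deduces ``all but finitely many $\varepsilon$'' from the non-identical vanishing of $f_{t_u}$ along the branch.
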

\begin{proof}
Let $(x,y,u) \in U$ such that $\varphi(x,y,u)=(\varepsilon,0,0)$. We note that 
\begin{align}
\det \begin{pmatrix}
f_{t_x}(x,y) & f_{t_{xx}}(x,y) & f_{t_{yx}}(x,y) \\ 
f_{t_y}(x,y) & f_{t_{xy}}(x,y) & f_{t_{yy}}(x,y) \\
f_{t_u}(x,y) & f_{t_{x_u}}(x,y) & f_{t_{y_u}}(x,y)    
\end{pmatrix} & = f_{t_u}(x,y) \cdot \det \begin{pmatrix}
f_{t_{xx}}(x,y) & f_{t_{yx}}(x,y) \\ 
f_{t_{xy}}(x,y) & f_{t_{yy}}(x,y)
\end{pmatrix}. \label{y4}
\end{align}
This is because $f_{t_x}(x,y)$ and $f_{t_y}(x,y)$ 
are both equal to zero. 
We now note that 
$f_t$ has an $A_1$ singularity at $(0,0)$; hence determinant of Hessian of $f_t$ does not vanish at $(0,0)$. 
Since $(x,y)$ is small, we conclude that the determinant of the Hessian of $f_t$ at $(x,y)$ is nonzero. 
Hence, if the right hand side of \cref{y4} is zero, then $f_{t_u}(x,y)$ has to be zero. We claim this is not possible for a generic 
$\varepsilon$. To see why this is so, note that the solution to the equation $\varphi(x,y,u)=(\varepsilon,0,0)$ with $\varepsilon \in \widehat{U}$ is given by \cref{sol3}. 
After plugging the value of $(x,y)$ obtained in \cref{sol3} to the expression  of $f_{t}(x,y)$, we conclude from \cref{g_3_mult} that 
\begin{align*}
f_{t_u}(x,y)& = -\Big(\frac{3(A_2-A_1)^2 (A_3-A_1)^2}{324 (A_2 - A_3)}\Big) u^2 + O(u^3)
\end{align*}
Note that $f_{t_u}(x,y)$ is a power series of $u$ which is not identically zero in a small open subset of $\mathbb{C}$ containing the origin and hence it has only finitely many zeros. 
We conclude that $(\varepsilon,0,0)$ is a regular value of $\varphi$ for all but a finite set of $\varepsilon$; in particular 
for a generic $\varepsilon$, $(\varepsilon,0,0)$ is a regular value of $\varphi$.
\end{proof}


\subsection{Proof of \Cref{npa1}: computation of $N(A_1^{\delta} \mathcal{P} A_1)$ when $0\leq \delta \leq 2$} 
We will now justify our formula for 
$N(A_1^{\delta} \mathcal{P} A_1,r, s, n_1, n_2, n_3, \theta)$, when $0 \leq \delta \leq 2$.
If $\theta =0$, then the formula 
follows from \cref{deg_to_one_up_to_down}.\\ 
\hf \hf Let us now assume $\theta>0$. 
Recall that (as per the definition in section \ref{notation})
\begin{align*}
\A^{\delta}_1 \circ \overline{\hat{\A}}_1 := \{ ([f], [\eta], q_1, \ldots, q_{\delta}, 
l_{q_{\delta+1}}) \in \mathcal{S}_{\mathcal{D}_{\delta}}\times_{\mathcal{D}} \mathbb{P} W_{\mathcal{D}}: 
&\textnormal{$f$ has a singularity of type $\A_1$ at $q_1, \ldots, q_{\delta}$}, \\ 
                   & ([f], [\eta], l_{q_{\delta+1}}) \in \overline{\hat{A}}_1, ~~\textnormal{$q_1, \ldots, q_{\delta+1}$ all distinct}\}. 
\end{align*}
Let $\mu$ be a generic cycle, representing 
the class 
\begin{align*}
[\mu] = \mathcal{H}_L^r \cdot \mathcal{H}_p^s \cdot a^{n_1} \lambda^{n_2} (\pi_{\delta+1}^*H)^{n_3} (\pi_{\delta+1}^*\lambda_{W})^{\theta}. 
\end{align*}
We now define a section of the following bundle 
\begin{align*}
\Psi_{\PP \A_1}:  \A^{\delta}_1 \circ \overline{\hat{\A}}_1 
\lra \mathcal{L}_{\PP \A_1} & := \gamma_{\DD}^\ast\otimes \gamma_W^{\ast 2}\otimes \gamma_{\mathbb{P}^3}^{* d}, \qquad \textnormal{given by}\\
\{\Psi_{\PP \A_1}([f], q_1,\ldots,q_\delta,l_{q_{\delta+1}})\}(f\otimes v^{\otimes 2}) &:= \nabla^2 f|_{q_{\delta+1}}(v, v).  \nonumber
\end{align*}
We will show shortly that this section is transverse to zero. Next, let us define 
\begin{align*}
\mathcal{B} &:= \overline{\A^{\delta}_1 \circ \overline{\hat{\A}}}_1- \A^{\delta}_1 \circ \overline{\hat{\A}}_1. 
\end{align*}
Hence
\begin{align}
\lan e(\mathcal{L}_{\PP \A_1}), 
~~[\overline{\A^{\delta}_1 \circ \overline{\hat{\A}}}_1] \cap [\mu] \ran & = 
\N(\A_1^{\delta}\PP \A_1, r, s, n_1, n_2, n_3, \theta)  
+ \mathcal{C}_{\mathcal{B}\cap \mu},
\label{npa1_euler_bdry_eqn}
\end{align}
where as before, $\mathcal{C}_{\mathcal{B} \cap \mu}$ denotes the contribution of the section 
to the Euler class from $\mathcal{B} \cap \mu$. 
When $\delta=0$, the boundary $\mathcal{B}$ is empty. Hence, plugging in $\mathcal{C}_{\mathcal{B} \cap \mu}=0$ 
and unwinding the left hand side of \cref{npa1_euler_bdry_eqn} 
gives us the formula of \Cref{npa1} for $\delta=0$. \\ 
\hf \hf Let us now assume $\delta>0$. 
Given $k$ distinct integers $i_1, i_2, \ldots, i_k \in [1, \delta+1]$, let 
$\Delta_{i_1, \ldots, i_{k}}$ be as defined in the proof of \Cref{na1_delta}. Let us define 
\begin{align*}
\hat{\Delta}_{i_1, \ldots, i_{k}}&:= \pi^{-1}(\Delta_{i_1, \ldots, i_{k}}), 
\end{align*}
where $\pi:\mathcal{S}_{\mathcal{D}_{\delta}}\times_{\mathcal{D}} \mathbb{P} W_{\mathcal{D}} 
\longrightarrow \mathcal{S}_{\mathcal{D}_{\delta+1}}$ is the projection map. Let us define 
\begin{align*}
\mathcal{B}(q_{i_1}, \ldots, q_{i_{k-1}}, l_{q_{\delta+1}}) & := \mathcal{B}\cap \hat{\Delta}_{i_1, \ldots, i_{k-1}, \delta+1}.
\end{align*}
Let us now consider $\mathcal{B}(q_i, l_{q_{\delta+1}})$. We claim that, 
\begin{align}
\B (q_i, l_{q_{\delta +1 }}) & \approx \overline{\A_1^{\delta-1} \circ \hat{\A}}_3, \label{two_nodes_Hat_A3}
\end{align}
where $\B (q_i, l_{q_{\delta +1 }})$ is identified as a subset of 
$\mathcal{S}_{\mathcal{D}_{\delta-1}}\times_{\widehat{\mathbb{P}}^3} \mathbb{P} W_{\mathcal{D}}$ in the obvious way 
(namely via the inclusion map where the $(\delta+1)^{\textnormal{th}}$ point is equal to the $i^{\textnormal{th}}$ point). 
We will  justify that shortly. 
Let us now intersect $\overline{\A_1^{\delta-1} \circ \hat{\A}}_3$ with $\mu$. This will 
be an isolated set of finite points. Hence, the section $\psi_{\PP \A_1}$ 
will not vanish on $\overline{\A_1^{\delta-1} \circ \hat{\A}}_3 \cap \mu$.  
Hence it does not 
contribute to the Euler class. \\
\hf \hf Next, let us consider $\B (q_{i_1}, q_{i_2}, l_{q_{\delta +1 }})$. 
We claim that
\begin{align}
\B (q_{i_1}, q_{i_2}, l_{q_{\delta +1 }}) & \approx \overline{\A_1^{\delta-2} \circ \hat{A}}_5 \cup 
\overline{\A_1^{\delta-2} \circ \widehat{D}}_4 .  \label{three_nodes_Hat_D4}
\end{align}
The set $\overline{\A_1^{\delta-2} \circ \hat{A}}_5 \cap \mu$ is empty since the sum total of the dimensions of these two varieties 
is one less than the dimension of the ambient space. Next, we note that  
the section $\Psi_{\PP \A_1}$ vanishes everywhere on 
$\overline{\A_1^{\delta-2} \circ \widehat{D}}_4$; hence it also vanishes on 
$\overline{\A_1^{\delta-2} \circ \widehat{D}}_4 \cap \mu$. 
We claim that the 
contribution from each of the points of $\B (q_{i_1}, q_{i_2}, l_{q_{\delta +1 }}) \cap \mu$  
is $6$. Hence the total contribution from all the components 
of type $\B (q_{i_1}, q_{i_2}, l_{q_{\delta +1 }})$ is 
\begin{align*}
6 \binom{\delta}{2} \N(\A_1^{\delta-2} \widehat{D}_4, n_1, n_2,n_3, \theta).   
\end{align*}
Plugging this in \cref{npa1_euler_bdry_eqn} gives us the formula of \cref{npa1}. \\ 
\hf \hf Let us now justify the transversality, closure and multiplicity claims. We will follow the setup of \cref{na1_delta}. 
Suppose 
\begin{align*}
\Psi_{\PP A_1}([f], [\eta], q_1, \ldots, q_{\delta}, l_{q_{\delta+1}}) & =0. 
\end{align*}
As before we assume $\eta$ determines the plane where the last component is zero and $q_{\delta+1}:= [0,0,1,0]$. 
Let us consider $T\mathbb{P}^2_{\eta}|_{[q_{\delta+1}]}$. Let $\partial_x$ and $\partial_y$ be the standard basis 
vectors for $T\mathbb{P}^2_{\eta}|_{[q_{\delta+1}]}$ (corresponding to the first two coordinates). Hence 
\begin{align*}
l_{q_{\delta+1}} & = [a \partial_x + b \partial_y] \in \mathbb{P}T\mathbb{P}^2_{\eta}|_{[q_{\delta+1}]}
\end{align*}
for some complex numbers $a,b$ not both of which are zero. Without loss of generality, we can assume 
$l_{q_{\delta+1}} = [\partial_x]$. Let us now consider the polynomial 
\begin{align*}
\rho_{20}&:= (X-X_1)^2(X-X_2)^2\ldots \cdot (X-X_{\delta})^2 X^2 Z^{d-2\delta-2}
\end{align*}
and consider the corresponding curve $\gamma_{20}(t)$. We now note 
\begin{align*}
\{\{\nabla \Psi_{\PP A_1}|_{([f],[\eta], q_1, \ldots, q_{\delta}, l_{q_{\delta+1}})}\}(\gamma_{20}^{\prime}(0))\}(f \otimes 
\partial_x \otimes \partial_x)&= \lambda Z^{d-2\delta-2} \nabla^2 X|_{[0,0,1,0]}(\partial_x, \partial_x).
\end{align*}
Since $\nabla^2 X|_{[0,0,1,0]}(\partial_x, \partial_x)$ is nonzero, we conclude that the section is transverse to zero. \\ 
\hf \hf Next, let us justify the closure claims. Let us start with \cref{two_nodes_Hat_A3}. This statement is saying that 
when two nodes collide, we get a tacnode. Hence, the proof of \cref{two_nodes_Hat_A3} is same as the proof of \cref{a1_a1_is_a3}. \\ 
\hf \hf Next, let us consider \cref{three_nodes_Hat_D4}. Again, this statement is saying what happens what happens when three nodes 
collide. Hence, the proof of \cref{three_nodes_Hat_D4} is same as the proof of \cref{three_node_triple_point}. \\ 
\hf \hf It remains to justify the contribution from the points of $\overline{\A_1^{\delta-2} \circ \widehat{D}}_4 \cap \mu$. 
We will use the solutions constructed in \cref{d4_three_nodes_soln}. Using the expression for $f_{t_{20}}$, we note that the multiplicity 
from each branch is the number of small solutions $u$ to the equation 
\begin{align*}
-\frac{f_{t_{30}}}{3} (A_1 - A_3) u + E_6(u) & = \varepsilon.  
\end{align*}
This is clearly $1$. Since there are $6$ branches, the total multiplicity is $6$. \qed

\subsection{Proof of \Cref{npa2}: computation of $N(A_1^{\delta} \mathcal{P} A_2)$ when $0\leq \delta \leq 2$} 
We will justify our formula for 
$N(A_1^{\delta} \mathcal{P} A_2,r, s, n_1, n_2, n_3, \theta)$, when $0 \leq \delta \leq 2$.
Recall that 
\begin{align*}
\A^{\delta}_1 \circ \overline{\PP \A}_1 := \{ ([f], [\eta], q_1, \ldots, q_{\delta}, 
l_{q_{\delta+1}}) \in \mathcal{S}_{\mathcal{D}_{\delta}}\times_{\mathcal{D}} \mathbb{P} W_{\mathcal{D}}: 
&\textnormal{$f$ has a singularity of type $\A_1$ at $q_1, \ldots, q_{\delta}$}, \\ 
                   & ([f], [\eta], l_{q_{\delta+1}}) \in \overline{\PP \A}_1, ~~\textnormal{$q_1, \ldots, q_{\delta+1}$ all distinct}\}. 
\end{align*}
Let $\mu$ be a generic cycle, representing 
the class 
\begin{align*}
[\mu] = \mathcal{H}_L^r \cdot \mathcal{H}_p^s \cdot a^{n_1} \lambda^{n_2} (\pi_{\delta+1}^*H)^{n_3} (\pi_{\delta+1}^*\lambda_{W})^{\theta}. 
\end{align*}
Recall that as per the hypothesis of the Theorem, if $\delta=2$ then $\theta=0$. 
We now define a section of the following line bundle 
\begin{align}
\Psi_{\PP \A_2}:  \A^{\delta}_1 \circ \overline{\PP \A}_1 
\lra \mathbb{L}_{\PP \A_2} & := \gamma_{\DD}^\ast\otimes \gamma_W ^\ast \otimes (W/\gamma_{W})^\ast\otimes \gamma_{\mathbb{P}^3}^{* d}, 
\qquad \textnormal{given by} \nonumber \\
\{\Psi_{\PP \A_2}([f], q_1,\ldots,q_\delta,l_{q_{\delta+1}})\}(f\otimes v \otimes w) &:= 
\nabla^2 f|_{q_{\delta+1}}(v,w). \nonumber
\end{align}
We will show shortly that this section is transverse to zero. 
Next, let us define 
\begin{align*}
\mathcal{B} &:= \overline{\A^{\delta}_1 \circ \overline{\PP \A}}_1- \A^{\delta}_1 \circ \overline{\PP\A}_1. 
\end{align*}
Hence 
\begin{align}
\lan e(\mathbb{L}_{\PP \A_2}), 
~~[\overline{\A^{\delta}_1 \circ \overline{\PP\A}}_1] \cap [\mu] \ran & = \N(\A_1^{\delta}\PP \A_2, r, s, n_1, n_2, n_3, \theta)  
+ \mathcal{C}_{\mathcal{B}\cap \mu}.
\label{npa2_Euler_class_formula}
\end{align}
Define $\mathcal{B}(q_{i_1}, \ldots q_{i_k}, l_{q_{\delta+1}})$ as before. 
For simplicity, let us set $(i_1, i_2, \ldots, i_k):= (\delta-k, \ldots, \delta-1, \delta)$. 
Before we 
describe $\B (q_{i_1},q_{i_2}, l_{q_{\delta +1 }})$, let us define a few things. 
Let $v$ be a fixed nonzero vector that belongs to $l_{q_{\delta+1}}$. 
Let us define $W_1, W_2$, $W_3$ and $W_4$ as 
\begin{align}
W_1 &:=  \{ ([f], [\eta], q_1, \ldots,q_{\delta}, l_{q_{\delta+1}}) \in 
\overline{\A^{\delta}_1 \circ \overline{\PP\A}}_1: \nabla^2f|_{q_{\delta+1}} \not\equiv 0\}, \nonumber \\ 
W_2 &:=  \{ ([f], [\eta], q_1, \ldots,q_{\delta}, l_{q_{\delta+1}}) \in \overline{\A^{\delta}_1 \circ \overline{\PP\A}}_1: 
\nabla^2f|_{q_{\delta+1}} \equiv  0 \}, \nonumber  \\  
W_3 &:=  \{ ([f], [\eta], q_1, \ldots,q_{\delta}, l_{q_{\delta+1}}) \in \overline{\A^{\delta}_1 \circ \overline{\PP\A}}_1: 
\nabla^3f|_{q_{\delta+1}}(v,v,v) \neq  0 \} \qquad \textnormal{and} \nonumber \\ 
W_4 &:=  \{ ([f], [\eta], q_1, \ldots,q_{\delta}, l_{q_{\delta+1}}) \in \overline{\A^{\delta}_1 \circ \overline{\PP\A}}_1: 
\nabla^3f|_{q_{\delta+1}}(v,v,v) = 0 \}.
\end{align}

We claim that 
\begin{align}
 \B (q_{\delta}, l_{q_{\delta +1 }}) \cap W_1 & \approx \overline{\A_1^{\delta-1} \circ \PP \A}_3 \cap W_1, \label{hh1} \\ 
\B (q_{\delta}, l_{q_{\delta +1 }}) \cap W_2 & \approx \overline{\A_1^{\delta-1} \circ \widehat{\D}}_4, \label{hh2} \\ 
 \B (q_{\delta-1},  q_{\delta}, l_{q_{\delta +1 }}) \cap W_1 & \subset \overline{\A_1^{\delta-2} \circ \PP \A}_5 \cap W_1, \label{hh3} \\ 
\B (q_{\delta-1},  q_{\delta}, l_{q_{\delta +1 }}) \cap (W_2 \cap W_4)  & \approx \overline{\A_1^{\delta-2} \circ \PP \D}_4 \cap W_1
\qquad \textnormal{and} \label{hh4} \\
\B (q_{\delta-1},  q_{\delta}, l_{q_{\delta +1 }}) \cap (W_2 \cap W_3)  & \subset \overline{\A_1^{\delta-2} \circ \widehat{D}}_5. \label{hh5}
\end{align}
Notice that equations \eqref{hh3} and \eqref{hh5} say that 
the left hand side is a subset of the right hand side (unlike the other three
equations, which assert equality of sets). We now note that equations \eqref{hh1} and \eqref{hh2}, imply that 
\begin{align}
 \B (q_{i_1}, l_{q_{\delta +1 }}) & \approx \overline{\A_1^{\delta-1} \circ \PP \A}_3\cup 
\overline{\A_1^{\delta-1} \circ \widehat{\D}}_4, \label{a1_Pa1_clsr} 
\end{align}
while equations \eqref{hh3}, \eqref{hh4} and \eqref{hh5} imply that 
\begin{align}
\B (q_{i_1},q_{i_2}, l_{q_{\delta +1 }}) & \subset \overline{\A_1^{\delta-2} \circ \PP \A}_5\cup 
\overline{\A_1^{\delta-2} \circ \PP D}_4 \cup \overline{\A_1^{\delta-2} \circ \widehat{\D}}_5. \label{a1_a1_Pa1_clsr}
\end{align}

We claim that the contribution to the Euler class from each of the points of  $\overline{\A_1^{\delta-1} \circ \PP \A}_3 \cap \mu$, $\overline{\A_1^{\delta-1} \circ \widehat{\D}}_4 \cap \mu$ and $\overline{\A_1^{\delta-2} \circ \PP \D}_4 \cap \mu$ 
are 
$2,3$ and $4$ respectively. \\ 
\hf \hf Next, we note that for dimensional reasons, the intersection of $\overline{\A_1^{\delta-1} \circ \PP \A}_5$ 
with $\mu$ is empty. Hence, by \cref{hh3}, the intersection of $\B (q_{\delta-1},  q_{\delta}, l_{q_{\delta +1 }}) \cap W_1$ 
with $\mu$ is also empty and 
hence does not contribute to the Euler class. 
Finally, let us consider the component corresponding to the 
left hand side of \cref{hh5}; this is where we will use $\theta=0$. Let us consider the projection map 
\begin{align*}
\pi:\mathcal{S}_{\mathcal{D}_{\delta}} \times_{\widehat{\mathbb{P}}^3} \mathbb{P}W_D \longrightarrow \mathcal{S}_{\mathcal{D}_{\delta+1}}.
\end{align*}
We recall that 
\begin{align*}
\overline{\A_1^{\delta-2} \circ \widehat{D}}_5 & = \pi^{-1}(\overline{\A_1^{\delta-2} \circ D}_5). 
\end{align*}
Since $\theta=0$, we note that $\mu$ is the pullback of a class $\nu$, i.e. 
\begin{align*}
\mu &= \pi^*(\nu). 
\end{align*}
Hence, the intersection of $\mu$ with $\overline{\A_1^{\delta-2} \circ \widehat{D}}_5$ is in one to one correspondence with the 
intersection of $\nu$ with $\overline{\A_1^{\delta-2} \circ D}_5$. But the degree of the cohomology class $\nu$ is one more than the 
dimension of the cycle $\overline{\A_1^{\delta-2} \circ D}_5$. Hence, the intersection of $\overline{\A_1^{\delta-2} \circ D}_5$ 
with $\nu$ is empty and hence, the intersection of $\mu$ with $\overline{\A_1^{\delta-2} \circ \widehat{D}}_5$ is empty. 
As a result, by \cref{hh5}, the intersection of 
$\B (q_{\delta-1},  q_{\delta}, l_{q_{\delta +1 }}) \cap (W_2 \cap W_3)$ with $\mu$ is also empty. 
Hence the total contribution from all 
the components of type $\B (q_{i_1}, l_{q_{\delta +1 }})$ equals
\begin{align*}
2\binom{\delta}{1}N(\A_1^{\delta-1}\PP\A_3, r, s, n_1, n_2, n_3, \theta)
+3\binom{\delta}{1}N(\A_1^{\delta-1}\widehat{\D}_4, r, s, n_1, n_2, n_3, \theta),
\end{align*}
while the total contribution from all 
the components of type $\B (q_{i_1},q_{i_2}, l_{q_{\delta +1 }})$ equals
\begin{align*}
4\binom{\delta}{2}N(\A_1^{\delta-2}\PP\D_4, r, s, n_1, n_2, n_3, \theta).
\end{align*} 
Plugging this in \cref{npa2_Euler_class_formula} 
gives us the formula of \cref{npa2}. \\ 
\hf \hf Let us now prove the claim about transversality. This follows from following the setup of proof of transversality in 
Theorem  \cref{npa1}. We consider the polynomial 
\begin{align*}
\rho_{11}&:= (X-X_1)^2(X-X_2)^2\ldots \cdot (X-X_{\delta})^2 XY Z^{d-2\delta-2}
\end{align*}
and the corresponding curve $\gamma_{11}(t)$. Transversality follows by computing the derivative of the section $\Psi_{\PP A_2}$ 
along the curve $\gamma_{11}(t)$ as before.\\ 
\hf \hf Next, let us justify the closure and multiplicity claims. We will start by justifying \cref{a1_Pa1_clsr}. 
It suffices to justify \cref{hh1} and \cref{hh2}. Let us rewrite these two equations explicitly, namely  
\begin{align}
\{([f], [\eta], q_1, \ldots,q_{\delta}, l_{q_{\delta+1}}) \in \overline{A_1^{\delta}\circ \PP A}_1: 
q_{\delta} = q_{\delta+1}\} \cap W_1 & = \overline{A_1^{\delta-1}\circ \PP A}_3 \cap W_1 \qquad \textnormal{and} \label{eq1_w1}\\ 
\{([f], [\eta], q_1, \ldots,q_{\delta}, l_{q_{\delta+1}}) \in \overline{A_1^{\delta}\circ \PP A}_1: 
q_{\delta} = q_{\delta+1}\} \cap W_2 & = \overline{A_1^{\delta-1}\circ \widehat{D}}_4.  \label{eq2_w2}
\end{align}
Since $\widehat{D}_4$ is a subset of $W_2$, we did not write $\cap W_2$ on the right hand side of \cref{eq2_w2}.\\ 
\hf \hf Let us now start the proof of \cref{eq1_w1}. Let us first explain why the left hand side of \cref{eq1_w1} 
is a subset of its right hand side. To see that, first we note that $\PP A_1$ is a subset of  $\overline{\hat{A}}_1$. 
Since we have shown while proving \cref{a1_a1_is_a3} and \cref{a1_a1_a3_exp} 
that when two nodes collide we get a tacnode in \cref{a1_a1_a3_exp}, we conclude that 
\begin{align*}
\{([f], [\eta], q_1, \ldots,q_{\delta}, l_{q_{\delta+1}}) \in \overline{A_1^{\delta}\circ \hat{A}}_1: 
q_{\delta} = q_{\delta+1}\} & = \overline{A_1^{\delta-1}\circ \hat{A}}_3. 
\end{align*}
Hence, we conclude that 
\begin{align}
\{([f], [\eta], q_1, \ldots,q_{\delta}, l_{q_{\delta+1}}) \in \overline{A_1^{\delta}\circ \PP A}_1: 
q_{\delta} = q_{\delta+1}\} & \subset \overline{A_1^{\delta-1}\circ \hat{A}}_3 \nonumber \\ 
\implies 
\{([f], [\eta], q_1, \ldots,q_{\delta}, l_{q_{\delta+1}}) \in \overline{A_1^{\delta}\circ \PP A}_1: 
q_{\delta} = q_{\delta+1}\} \cap W_1 & \subset \overline{A_1^{\delta-1}\circ \hat{A}}_3 \cap W_1. \label{a1_PA1_hat_A3}
\end{align}
Suppose 
$([f], [\eta], q_1, \ldots,q_{\delta}, l_{q_{\delta+1}})$ belongs to the left hand side of \cref{a1_PA1_hat_A3}. Since 
$([f], [\eta], l_{q_{\delta+1}})$ belongs to $\overline{\PP A}_1$, we conclude that 
\begin{align*}
\nabla^2f|_{q_{\delta+1}}(v,v) & = 0 \qquad \forall ~~v \in l_{q_{\delta+1}}. 
\end{align*}
Since $([f], [\eta], q_1, \ldots,q_{\delta}, l_{q_{\delta+1}})$ is a subset of the right hand side of 
\cref{a1_PA1_hat_A3}, we conclude that the Hessian $\nabla^2f|_{q_{\delta+1}}$ is not identically zero, but it has a non trivial 
Kernel. We claim that $v$ is in the Kernel of the Hessian. To see why, 
let us assume that the nonzero vector $\tilde{v}$ is in the Kernel of the Hessian, i.e. 
$\nabla^2f|_{q_{\delta+1}}(\tilde{v}, \cdot)=0$. Let $w$ be any other vector, linearly independent from $\tilde{v}$. Since the Hessian 
is not identically zero and the vector space is two dimensional, 
we conclude that $\nabla^2f|_{q_{\delta+1}}(w, w) \neq 0$. Hence, writing the vector $v:= \lambda_1 \tilde{v} + \lambda_2 w$ 
and using $\nabla^2f|_{q_{\delta+1}}(v,v)=0$, we conclude that $\lambda_2 =0$. Hence, $v$ belongs to the Kernel of the Hessian. 
But we also note that if $([f], [\eta], l_q) \in \PP A_3$ and $\nabla^2 f|_q(v, \cdot)=0$, then $\nabla^3 f|_q(v,v,v) =0$. Hence, 
we can improve \cref{a1_PA1_hat_A3} and conclude that 
the left hand side of \cref{eq1_w1} is a subset of its right hand side.\\ 
\hf \hf Let us now prove the converse. We will simultaneously prove the following two statements 
\begin{align}
\{([f], [\eta], q_1, \ldots,q_{\delta}, l_{q_{\delta+1}}) \in \overline{A_1^{\delta}\circ \PP A}_1: 
q_{\delta} = q_{\delta+1}\}  & \supset A_1^{\delta-1}\circ \PP A_3, \label{k1} \\ 
\Big(\{([f], [\eta], q_1, \ldots,q_{\delta}, l_{q_{\delta+1}}) \in \overline{A_1^{\delta}\circ \PP A}_2: 
q_{\delta} = q_{\delta+1}\}\Big) \cap \Big(A_1^{\delta-1}\circ \PP A_3\Big) & = \emptyset \qquad \textnormal{and} \label{k2}  
\end{align}
We will prove the following claim: 
\begin{claim}
\label{one_node_andone_PA1_claim}
Let $([f], [\eta], q_1, \ldots, \ldots, q_{\delta-1}, l_{q_{\delta}}) \in A_1^{\delta-1}\circ \PP A_3$. Then there exists points 
\begin{align}
\big([f_t], [\eta_t], q_1(t), \ldots, q_{\delta-2}(t); q_{\delta-1}(t), q_{\delta}(t), l_{q_{\delta+1}(t)}\big) \in 
A_1^{\delta}\circ \PP A_1 \label{sol_const}
\end{align}
sufficiently close to $([f], [\eta], q_1, \ldots, \ldots, q_{\delta-1}; q_{\delta}, l_{q_{\delta}})$.
Furthermore, \textit{every} such solution 
satisfies the condition 
\begin{align}
\nabla^2f|_{q_{\delta+1}}(v,w) &\neq 0, \label{one_node_one_PA2_int_PA3_empty}
\end{align}
if $v$ is a nonzero vector that belongs to  $l_{q_{\delta+1}(t)}$ and $w$ is a nonzero vector that belongs to 
$T\mathbb{P}^2_{\eta}|_{q_{\delta+1}(t)}/l_{q_{\delta+1}(t)}$.  In other words, 
\begin{align*}
\big([f_t], [\eta_t], q_1(t), \ldots, q_{\delta-1}(t), q_{\delta}(t), l_{q_{\delta+1}(t)}\big) & \not\in A_1^{\delta} \circ \PP A_2. 
\end{align*}
\end{claim}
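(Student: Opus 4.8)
The strategy is to reuse the local normal form near $q_\delta$ developed in the proof of Claim \ref{two_and_three_node_claim}, now carrying along the extra datum of a marked tangent direction. First I would pass to an affine chart sending $\mathbb{P}^2_{\eta_t}$ to $\mathbb{C}^2_{z}$ and $q_\delta(t)$ to the origin, and apply the same substitutions $\hat y = y - B(x)$ followed by $\hat{\hat{y}} = \sqrt{\varphi(x,\hat y)}\,\hat y$ as there, bringing $f_t$ to the form $\hat f_t(x,\hat{\hat{y}}) = \hat{\hat{y}}^2 + \tfrac{1}{2}\mathcal{B}^{f_t}_2 x^2 + \tfrac{1}{6}\mathcal{B}^{f_t}_3 x^3 + \tfrac{1}{24}\mathcal{B}^{f_t}_4 x^4 + \mathcal{R}(x)x^5$, with $\mathcal{B}^{f_t}_2,\mathcal{B}^{f_t}_3$ small and $\mathcal{B}^{f_t}_4\neq 0$, exactly as in the cited proof (using $([f],[\eta],q_\delta)\in A_3$). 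A short bookkeeping step — tracing the differential of this coordinate change at the origin, which is lower triangular with $B'(0)=-f_{02}^{-1}f_{11}$ — shows that the direction $[\partial_x]$ of the new coordinates is precisely the kernel $l_{q_\delta}$ of the Hessian of $f$ at the tacnode. This compatibility is what lets the marked $\PP A_1$-direction limit onto the $\PP A_3$-direction.

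Next I would construct the deformations. Fixing the $\delta-1$ nodes near $q_1,\dots,q_{\delta-1}$ and letting two nodes — one at the origin, one at $(u,0)$ for small nonzero $u$ — emerge from $q_\delta$ is exactly the family produced in the proof of Claim \ref{two_and_three_node_claim}, governed by \eqref{soln_two_node_tacnode}, i.e. $\mathcal{B}^{f_t}_2 = \tfrac{1}{12}\mathcal{B}^{f_t}_4 u^2 + O(u^3)$ and $\mathcal{B}^{f_t}_3 = -\tfrac{1}{2}\mathcal{B}^{f_t}_4 u + O(u^2)$; realizing it by an honest degree-$d$ curve, after multiplying a fixed degree-$4$ local model by $\prod_{i=1}^{\delta-1}\big((x-x_i)^2+(y-y_i)^2\big)$, needs $d\geq 4+2(\delta-1)=2\delta+2$, which is the hypothesis of \Cref{npa2}. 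For each such $u$ the origin is a genuine node: its Hessian is $\mathrm{diag}(\mathcal{B}^{f_t}_2,2)$ with $\mathcal{B}^{f_t}_2\neq 0$, and its two branch directions solve $\mathcal{B}^{f_t}_2 a^2 + 2b^2 = 0$, so they are $[\partial_x \pm \sqrt{-\mathcal{B}^{f_t}_2/2}\,\partial_{\hat{\hat{y}}}]$ and \emph{both} converge to $[\partial_x] = l_{q_\delta}$ as $u\to 0$. Marking one of them yields a point of $A_1^\delta\circ\PP A_1$ (the $\delta+1$ points are distinct, each Hessian nondegenerate, and the marked direction is a null direction by construction), and as $u\to 0$ this configuration converges to $([f],[\eta],q_1,\dots,q_{\delta-1};q_\delta,l_{q_\delta})$, realized inside $\overline{A_1^\delta\circ\PP A}_1$ via $q_{\delta+1}=q_\delta$. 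This is the existence assertion of the claim, and it yields \eqref{k1}.

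For the \textbf{furthermore} clause — that every such solution violates \eqref{one_node_one_PA2_int_PA3_empty}, i.e. does not lie in $A_1^\delta\circ\PP A_2$ — it suffices to note that the marked point carries a nondegenerate Hessian, hence is an ordinary node and not a cusp, so the configuration cannot be a $\PP A_2$-configuration. More is true and gives \eqref{k2}: if instead one demands that a singular point $(x_0,0)$ near $q_\delta$ have rank-one Hessian and lie on the curve, unwinding the three resulting equations exactly as in the cited proof forces $x_0^4\big(\tfrac{1}{72}\mathcal{B}^{f_t}_4 + O(x_0)\big)=0$, hence $x_0=0$ and $\mathcal{B}^{f_t}_2=\mathcal{B}^{f_t}_3=0$; equivalently, centering the normal form at a prospective cusp (so $\mathcal{B}^{f_t}_2=0$) and demanding a second singular point at $(x_0,0)$ forces $\tfrac{1}{12}\mathcal{B}^{f_t}_4 x_0 = O(x_0^2)$, hence $x_0=0$. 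Thus there is no deformation of $f$ carrying a cusp and a separate node both converging to $q_\delta$ — the tacnode does not degenerate an $A_2A_1$ pair — so no point of $A_1^{\delta-1}\circ\PP A_3$ lies in $\overline{A_1^\delta\circ\PP A}_2$ with $q_\delta=q_{\delta+1}$, which is \eqref{k2}.

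The main obstacle is this last step: ruling out the $A_2A_1$ collision (equivalently, the absence of the $A_2A_1$ stratum from a versal deformation of $A_3$). Everything else is a transcription of the normal-form bookkeeping from the proof of Claim \ref{two_and_three_node_claim}, together with the routine verification that the coordinate change carries $[\partial_x]$ to $l_{q_\delta}$, which is what makes the $\PP A_1$-direction limit onto the $\PP A_3$-direction.
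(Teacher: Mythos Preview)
Your argument is correct but takes a genuinely different route from the paper's. The paper chooses the affine chart so that the marked direction at the $\PP A_1$ point is \emph{exactly} $\partial_x$, which imposes $f_{t_{20}}=0$ as a constraint on $f_t$; solving for the nearby extra node then yields $f_{t_{11}}=\pm\sqrt{-f_{t_{02}}\mathcal{B}^{f_t}_4/12}\,u+O(u^2)$, so the two branches come from the sign of $f_{t_{11}}$, and the single observation $f_{t_{11}}\neq 0$ simultaneously certifies that the origin is a genuine node (so the family really lies in $\PP A_1$) and rules out any $\PP A_2$ configuration, giving \eqref{k1} and \eqref{k2} in one stroke. You instead leave $f_{t_{20}}$ unconstrained, import the two-node family of Claim~\ref{two_and_three_node_claim} as a black box, and then mark one of the two null directions of the Hessian at the origin; your two branches come from this choice, and you must separately check (via $B'(0)=-f_{t_{11}}/f_{t_{02}}$) that the marked direction limits onto $l_{q_\delta}$, and give a separate argument for \eqref{k2}. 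The paper's route is more economical; yours is more modular, since it recycles the earlier claim verbatim rather than redoing the computation under an extra constraint.

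One small point: the first sentence of your \eqref{k2} paragraph---the one producing $x_0^4\big(\tfrac{1}{72}\mathcal{B}^{f_t}_4+O(x_0)\big)=0$---is opaque as written (it is not clear which three equations you are unwinding or where that expression comes from). The ``equivalently'' formulation is the clean one: centering at the prospective cusp forces $\mathcal{B}^{f_t}_2=0$, and then the extra-singularity constraint $\mathcal{B}^{f_t}_2=\tfrac{1}{12}\mathcal{B}^{f_t}_4\,x_0^2+O(x_0^3)$ forces $x_0=0$. I would drop the first version.
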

\begin{rem}
We note that \cref{one_node_andone_PA1_claim} simultaneously proves \cref{k1} and \cref{k2}. 
\end{rem}

\noindent \textbf{Proof:} 
Following the setup of the proofs of claims \ref{two_and_three_node_claim} and 
\ref{three_node_claim_d4}, we  
will now work in an affine chart, where we send the plane 
$\mathbb{P}^2_{\eta_t}$ to $\mathbb{C}^2_{z}$ and the 
point $q_{\delta}(t) \in \mathbb{P}^2_{\eta_t}$ to 
$(0,0,0) \in \mathbb{C}^2_{z}$. We also choose coordinates, such that $ \partial_x \in l_{q_{\delta+1}(t)}$. 
Using this  chart, 
let us write down the Taylor expansion of $f_t$ around the point $(0,0)$, namely  
\begin{align*}
f_t(x,y)&= f_{t_{11}}xy + \frac{f_{t_{02}}}{2} y^2 + \frac{f_{t_{30}}}{6} x^3 + \frac{f_{t_{21}}}{2} x^2y + 
\frac{f_{t_{12}}}{2} x y^2 + \frac{f_{t_{03}}}{6} y^3+\ldots 
\end{align*}
Since $([f_t], [\eta_t], l_{q_{\delta}(t)}) \in \PP A_1$, we conclude that $f_{t_{20}}$ is zero. Next, let us consider the 
Taylor expansion of $f$ (not $f_t$). We note that $([f], [\eta], l_{q_{\delta}}) \in \PP A_3$. This means that 
$f_{11}$ and $f_{02}$ can not both be zero (since that would mean the Hessian is identically zero). If $f_{02} =0$ and 
$f_{11} \neq 0$, then it implies that $([f], [\eta], l_{q_{\delta}}) \in \hat{A}_1$ (and hence does not belong to $\PP A_3$). 
Hence, $f_{02} \neq 0$ and hence we conclude that $f_{t_{02}} \neq 0$. Finally, since 
$([f], [\eta], l_{q_{\delta}}) \in \PP A_3$, we conclude that $f_{{11}}$ and $f_{{30}}$ are zero; hence 
$f_{t_{11}}$ and $f_{t_{30}}$ are small (close to zero). We will mainly follow the Proof of \cref{two_and_three_node_claim}.  
Since $f_{t_{02}} \neq 0$ we can make the same change of coordinates  $ \hat{y}:= y + B(x)$ as 
in the Proof of \cref{two_and_three_node_claim} and write $f_t$ as 
\begin{align*}
f_t (x, y(x, \hat{y})) &=  
\varphi(x,\hat{y})\hat{y}^2 + \frac{\mathcal{B}_2^{f_t}}{2!} x^2 + 
\frac{\mathcal{B}_3^{f_t}}{3!} x^3 + \frac{\mathcal{B}_4^{f_t}}{4!} x^4 + \mathcal{R}(x)x^5,  
\end{align*}
where 
\begin{align}
\B^{f_t}_2 & := - \frac{f_{t_{11}}^2}{f_{t_{02}}}, \qquad \B^{f_t}_3 := f_{t_{30}}
-\frac{3f_{t_{11}} f_{t_{21}}}{f_{t_{02}}}+\frac{3f_{t_{11}}^2 f_{t_{12}}}{f_{t_{02}}^2}
-\frac{3f_{t_{11}}^3 f_{t_{03}}}{f_{t_{02}}^3}, \ldots,  
\qquad   \varphi(0,0) \neq 0 \label{Bf_k_formula}
\end{align}
and $\mathcal{R}(x)$ is a holomorphic function defined in a neighborhood of the origin. 
Since $([f], [\eta], q_{\delta}) \in \PP A_3$, 
we conclude that $\B^{f_t}_2$ and $\B^{f_t}_3$ are small (close to zero) and $\B^{f_t}_4$ is nonzero.  
Let us make a further change of coordinates and denote 
\begin{align*}
\hat{\hat{y}}&:= \sqrt{\varphi(x, \hat{y})} \hat{y}.
\end{align*}
Note that we can choose a branch of the square root since $\varphi(0,0) \neq 0$. 
Next, for notational convenience, let us now define 
\begin{align}
\hat{f}_t(x, \hat{\hat{y}})  &:= f_t (x, y(x, \hat{y}(\hat{\hat{y}})))), 
\end{align}
i.e. $\hat{f}_t$ is basically $f_t$ written in the new coordinates (namely $x$ and $\hat{\hat{y}}$). Hence, 
\begin{align*}
\hat{f}_t(x, \hat{\hat{y}}) & = \hat{\hat{y}}^2 + \frac{\mathcal{B}_2^{f_t}}{2!} x^2 + 
\frac{\mathcal{B}_3^{f_t}}{3!} x^3 + \frac{\mathcal{B}_4^{f_t}}{4!} x^4 + \mathcal{R}(x)x^5.
\end{align*}
We now note that constructing the points 
on the left hand side of \cref{sol_const} amounts to solving the set of equations 
\begin{align}
\hat{f}_t & = 0, \qquad \hat{f}_{t_x} =0 \qquad \textnormal{and} \qquad \hat{f}_{t_{\hat{\hat{y}}}} =0, \label{eq_k4}
\end{align}
where $(x, \hat{\hat{y}})$ is small but not equal to $(0,0)$.\\ 
\hf \hf We will now construct solutions to \cref{eq_k4}. The solutions to \cref{eq_k4} are given by 
\begin{align}
\hat{\hat{y}} =0, \qquad \mathcal{B}^{f_t}_2 & = \frac{\mathcal{B}_4^{f_t}}{12} x^2 + O(|x|^3) \qquad \textnormal{and} \qquad 
\mathcal{B}^{f_t}_3 = -\frac{\mathcal{B}_4^{f_t}}{2} x + O(|x|^2). \label{f11+f30_soln}
\end{align}
Now we use the expression of $\mathcal{B}^{f_t}_2,\mathcal{B}^{f_t}_3$ and conclude from \cref{f11+f30_soln} that 
\begin{align}
\frac{f_{t_{11}}^2}{f_{t_{02}}}& = -\frac{\mathcal{B}_4^{f_t}}{12} x^2 + O(|x|^3) \qquad \textnormal{and} \label{f11} \\
f_{t_{30}} & = -3\mathcal{B}_4^{f_t} x+O(|x|^2). \label{f30_soln}
\end{align}
Hence, there are 
two solutions to \cref{f11}, given by 
\begin{align}
f_{t_{11}} & = \Big(\sqrt{\frac{-f_{t_{02}}\mathcal{B}^{f_t}_4}{12}}\Big) x + O(|x|^2) \qquad \textnormal{or} \qquad  
f_{t_{11}} = -\Big(\sqrt{\frac{-f_{t_{02}}\mathcal{B}^{f_t}_4}{12}}\Big) x + O(|x|^2), \label{f11_mult}
\end{align}
where $\sqrt{}$ denotes a branch of the square root. 
Hence, there are \textbf{exactly} two solutions to 
\cref{eq_k4}, given by 
\begin{align}
x&= u, \qquad f_{t_{11}} = \pm \Big(\sqrt{\frac{-f_{t_{02}}\mathcal{B}^{f_t}_4}{12}}\Big) u + O(|u|^2) \label{x_and_f11}
\end{align}
and $\hat{\hat{y}}=0$ and $f_{t_{30}}$ as given by 
\cref{f30_soln}, where we plug in the expressions for 
$x$ and $f_{t_{11}}$ as given by \cref{x_and_f11} to express them in terms of $u$ 
(the exact expressions in terms of $u$ are not so important, hence we have not written that out explicitly). This proves 
\cref{one_node_andone_PA1_claim}. Since \cref{x_and_f11} are the \textbf{only} solutions and $\mathcal{B}^{f_t}_4 \neq 0$, 
we also conclude that \cref{one_node_one_PA2_int_PA3_empty} is true. \qed \\
\hf \hf It remains to compute the multiplicity. We claim the each point of $(A_1^{\delta-1}\circ \PP A_3)\cap \mu$ 
contributes $2$ to the Euler  class in \cref{npa2_Euler_class_formula}. 
Using \cref{x_and_f11} 
we conclude that the multiplicity 
from each branch 
is the number of small solutions $u$ to the equation 
\begin{align*}
\Big(\sqrt{\frac{-f_{t_{02}}\mathcal{B}^{f_t}_4}{12}}\Big) u + O(|u|^2)& = \varepsilon 
\qquad \textnormal{and} \qquad -\Big(\sqrt{\frac{-f_{t_{02}}\mathcal{B}^{f_t}_4}{12}}\Big) u + O(|u|^2) = \varepsilon.
\end{align*}
This number is $1$ in each case and hence, 
the total multiplicity is $2$. \qed \\ 
\hf \hf Next, let us justify \cref{eq2_w2}. Let us first explain why the left hand side of 
\cref{eq2_w2} is a subset of its right hand side. If 
$([f], [\eta], q_1, \ldots,q_{\delta}, l_{q_{\delta+1}}) \in W_2$, then it means that $\nabla^2f|_{q_{\delta+1}} = 0$. 
Hence, it means that $([f], [\eta], l_{q_{\delta+1}}) \in \overline{\widehat{D}}_4$. Hence, the left hand side of  
\cref{eq2_w2} is a subset of its right hand side.\\ 
\hf \hf Let us now prove \cref{eq2_w2}. Before that, let us introduce a new space. Let us define 
\begin{align*}
\widehat{D}_4^{\#}&:= \{([f], [\eta], l_{q}) \in \widehat{D}_4: \nabla^3f|_q(v,v,v) \neq 0 ~~\textnormal{if} ~~v \in l_q-0\}. 
\end{align*}
Note that $\overline{\widehat{D}^{\#}_4} = \overline{\widehat{D}}_4$. 
We will now simultaneously prove the following two statements: 
\begin{align}
\{([f], [\eta], q_1, \ldots,q_{\delta}, l_{q_{\delta+1}}) \in \overline{A_1^{\delta}\circ \PP A}_1: 
q_{\delta} = q_{\delta+1}\}  & \supset A_1^{\delta-1}\circ \widehat{D}_4^{\#}  \qquad \textnormal{and} \label{kk1} \\ 
\Big(\{([f], [\eta], q_1, \ldots,q_{\delta}, l_{q_{\delta+1}}) \in \overline{A_1^{\delta}\circ \PP A}_2: 
q_{\delta} = q_{\delta+1}\}\Big) \cap \Big(A_1^{\delta-1}\circ \widehat{D}_4^{\#} \Big) & = \emptyset. \label{kk2}
\end{align}
We will prove the following claim:
\begin{claim}
\label{one_node_and_one_PA1_claim_D4}
Let $([f], [\eta], q_1, \ldots, \ldots, q_{\delta-1}, l_{q_{\delta}}) \in A_1^{\delta-1}\circ \widehat{D}_4^{\#}$. Then there exists points 
\begin{align}
\big([f_t], [\eta_t], q_1(t), \ldots, q_{\delta-2}(t); q_{\delta-1}(t), q_{\delta}(t), l_{q_{\delta+1}(t)}\big) \in 
A_1^{\delta}\circ \PP A_1 \label{sol_const_2}
\end{align}
sufficiently close to $([f], [\eta], q_1, \ldots, \ldots, q_{\delta-1}; q_{\delta}, l_{q_{\delta}})$.
Furthermore, \textit{every} such solution 
satisfies the condition 
\begin{align}
\nabla^2f|_{q_{\delta+1}}(v,w) &\neq 0, \label{one_node_one_PA2_int_PA3_empty_2}
\end{align}
if $v$ is a nonzero vector that belongs to  $l_{q_{\delta+1}(t)}$ 
and $w$ is a nonzero vector that belongs to 
$T\mathbb{P}^2_{\eta}|_{q_{\delta+1}(t)}/l_{q_{\delta+1}(t)}$.  
In other words,
\begin{align*}
\big([f_t], [\eta_t], q_1(t), \ldots, q_{\delta-1}(t), q_{\delta}(t), l_{q_{\delta+1}(t)}\big) & \not\in A_1^{\delta} \circ \PP A_2. 
\end{align*}
\end{claim}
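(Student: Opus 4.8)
The plan is to mirror the proof of Claim \ref{one_node_andone_PA1_claim}, borrowing the $D_4$ normal form and the auxiliary polynomial $g_t$ from the proof of Claim \ref{three_node_claim_d4}. First I would choose a $t$-dependent affine chart sending $\mathbb{P}^2_{\eta_t}$ to $\mathbb{C}^2_z$, the prospective $\mathcal{P}A_1$ point $q_{\delta+1}(t)$ to the origin, and with $\partial_x$ spanning $l_{q_{\delta+1}(t)}$, and then Taylor expand $f_t$ at the origin as in \cref{eqf_t}. In the limit $t\to 0$ the curve $f$ has a $D_4$ at the origin, so the whole quadratic part of $f$ vanishes; hence $f_{t_{20}},f_{t_{11}},f_{t_{02}}$ are all small, and in fact $f_{t_{20}}=0$ identically since we are constructing points of $\mathcal{P}A_1$ (so $\nabla^2 f_t|_{0}(\partial_x,\partial_x)=0$). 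Because $([f],[\eta],l_{q_\delta})\in\widehat{D}_4^{\#}$, the marked direction $\partial_x$ is not a branch of the cubic part of $f$, i.e. $f_{30}\neq 0$, so $f_{t_{30}}\neq 0$; then, as in \cref{cubic_defn_0}, the cubic part of $f_t$ (equivalently of $g_t$, see \cref{g_defn}) factors as $\tfrac{f_{t_{30}}}{6}(x-A_1y)(x-A_2y)(x-A_3y)$ with $A_1,A_2,A_3$ distinct and finite. The remaining $\delta-1$ nodes $q_1(t),\dots,q_{\delta-1}(t)$ are produced away from the origin exactly as in the proof of Claim \ref{two_and_three_node_claim}, by multiplying a degree-$4$ local model by $\prod_{i=1}^{\delta-1}\big((x-x_i)^2+(y-y_i)^2\big)$; this uses degree $2(\delta-1)$, so together with the degree of the local model we need $d\ge 2\delta+2$, which is the hypothesis of Theorem \ref{npa2}.

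The heart of the argument is the local construction near the $D_4$. Besides the node at the origin (with $f_{t_{20}}=0$) we must also produce one further node at a point $(x_1,y_1)\neq(0,0)$ small, i.e. solve $f_t(x_1,y_1)=f_{t_x}(x_1,y_1)=f_{t_y}(x_1,y_1)=0$, equivalently $g_t(x_1,y_1)=f_{t_x}(x_1,y_1)=f_{t_y}(x_1,y_1)=0$. Solving $g_t=0$ confines $(x_1,y_1)$ to one of the three branches $x_1=A_iy_1+O(y_1^2)$, just as in the proof of Claim \ref{three_node_claim_d4}. Fixing such a branch and unwinding $f_{t_x}=0$ and $f_{t_y}=0$ — in which $f_{t_{30}},f_{t_{21}},\dots$ are essentially prescribed by their limiting $D_4$-values while $f_{t_{11}},f_{t_{02}}$ are the free small parameters — produces, with $u:=y_1$ as parameter, a one-parameter family of curves $f_t$ with
\begin{align*}
x_1(u)=A_iu+O(u^2),\qquad f_{t_{11}}(u)= -\tfrac{f_{t_{30}}}{2}\,A_i^2\,u + O(u^2),\qquad f_{t_{02}}(u)=O(u),
\end{align*}
the analogue of \cref{d4_three_nodes_soln} in the present situation. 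Since $g_t$ has three \emph{distinct} roots, at most one $A_i$ can vanish, so for a branch with $A_i\neq 0$ we get $f_{t_{11}}(u)\neq 0$ for $u$ small and nonzero; with $f_{t_{20}}=0$ this gives $\det\nabla^2 f_t|_0=-f_{t_{11}}(u)^2\neq 0$, so the origin is a genuine node and the configuration constructed indeed lies in $A_1^{\delta}\circ\mathcal{P}A_1$, establishing \cref{sol_const_2}.

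The ``furthermore'' assertion \cref{one_node_one_PA2_int_PA3_empty_2} then comes for free: for any configuration produced this way, taking $v=\partial_x\in l_{q_{\delta+1}(t)}$ and $w$ the image of $\partial_y$ in $T\mathbb{P}^2_{\eta}|_{q_{\delta+1}(t)}/l_{q_{\delta+1}(t)}$ gives $\nabla^2 f_t|_{q_{\delta+1}(t)}(v,w)=f_{t_{11}}(u)\neq 0$, so the point does not lie on $A_1^{\delta}\circ\mathcal{P}A_2$. The main obstacle is carrying through the local computation above carefully: it is a genuine variant of the $D_4$ analysis of Claim \ref{three_node_claim_d4} — one node fewer near the singularity, with the constraint $f_{t_{20}}=0$ consuming one degree of freedom of the versal deformation — and one must verify that the leading coefficient $-\tfrac{f_{t_{30}}}{2}A_i^2$ of $f_{t_{11}}(u)$ is nonzero for at least one admissible branch, which is precisely where the hypothesis $l_{q_\delta}\in\widehat{D}_4^{\#}$ (forcing $f_{30}\neq 0$, hence all $A_i$ finite) and the distinctness of the three branches (so $A_i\neq 0$ for at least two of them) enter.
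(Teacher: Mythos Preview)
Your overall strategy is exactly the paper's: fix $f_{t_{20}}=0$, solve $g_t=0$ to land on one of the three cubic branches $x_1=A_iu+O(u^2)$, then solve $f_{t_x}=f_{t_y}=0$ for $f_{t_{11}},f_{t_{02}}$ in terms of $u$. The gap is in the computation of the leading coefficient of $f_{t_{11}}$. When you plug $x_1=A_iu$, $y_1=u$ into $f_{t_x}=f_{t_{11}}y+\tfrac{f_{t_{30}}}{2}x^2+f_{t_{21}}xy+\tfrac{f_{t_{12}}}{2}y^2+\cdots$, the quadratic part is $\big(\tfrac{f_{t_{30}}}{2}A_i^2+f_{t_{21}}A_i+\tfrac{f_{t_{12}}}{2}\big)u^2$, not just $\tfrac{f_{t_{30}}}{2}A_i^2u^2$; you have dropped the $f_{t_{21}}$ and $f_{t_{12}}$ contributions. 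Using the factorisation of the cubic to rewrite $f_{t_{21}},f_{t_{12}}$ in terms of the $A_j$, the correct leading term is
\[
f_{t_{11}}(u)=-\tfrac{f_{t_{30}}}{6}(A_i-A_j)(A_i-A_k)\,u+O(u^2),
\]
which is the paper's formula \cref{f11_sol_8}.

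This is not a cosmetic slip: your argument for nonvanishing hinges on ``$A_i\neq 0$ for at least two branches'', whereas the correct argument is that the $A_i$ are pairwise distinct (the $D_4$ condition), so the leading coefficient is nonzero on \emph{all three} branches. That distinction matters for the ``Furthermore'' assertion, which concerns \emph{every} nearby solution, not just one you happened to construct; with your formula a branch with $A_i=0$ (equivalently $f_{03}=0$) would give $f_{t_{11}}=O(u^2)$ and your argument would not rule out $f_{t_{11}}=0$ there. It also matters one step later, where the paper computes the multiplicity $3$ as one solution per branch over three branches: that count relies on each branch having first-order leading term in $f_{t_{11}}$. So the route is right, but the key coefficient — and the reason it is nonzero — need to be corrected.
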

\begin{rem}
We note that \cref{one_node_and_one_PA1_claim_D4} proves \cref{kk1} and \cref{kk2} simultaneously 
(since $\overline{\widehat{D}^{\#}_4} = \overline{\widehat{D}}_4$). 
\end{rem}

\noindent \textbf{Proof:} Following the setup of the proofs of claims \ref{two_and_three_node_claim},
\ref{three_node_claim_d4} and \ref{one_node_andone_PA1_claim}, we  
will now work in an affine chart, where we send the plane 
$\mathbb{P}^2_{\eta_t}$ to $\mathbb{C}^2_{z}$ and the 
point $q_{\delta}(t) \in \mathbb{P}^2_{\eta_t}$ to 
$(0,0,0) \in \mathbb{C}^2_{z}$. 
We also choose coordinates, such that $ \partial_x \in l_{q_{\delta+1}(t)}$. 
Using this  chart, 
let us write down the Taylor expansion of $f_t$ around the point $(0,0)$, namely  
\begin{align*}
f_t(x,y)&= f_{t_{11}}xy + \frac{f_{t_{02}}}{2} y^2 + \frac{f_{t_{30}}}{6} x^3 + \frac{f_{t_{21}}}{2} x^2y + 
\frac{f_{t_{12}}}{2} x y^2 + \frac{f_{t_{03}}}{6} y^3+\ldots 
\end{align*}
Since $([f_t], [\eta_t], l_{q_{\delta}(t)}) \in \PP A_1$, we conclude that $f_{t_{20}}$ is zero. Next, since 
$([f], [\eta], l_{q_{\delta}}) \in \widehat{D}_4$, we conclude that $f_{20}, \, f_{11}$ and $f_{02}$ are zero; hence 
$f_{t_{11}}$ and $f_{t_{02}}$ are small (close to zero). Hence, constructing points on the right hand side of 
\cref{sol_const_2} amounts to finding solutions to the set of equations 
\begin{align}
f_t & = 0, \qquad f_{t_x} =0 \qquad \textnormal{and} \qquad f_{t_{y}} =0, \label{eq_s1}
\end{align}
where $(x,y)$ is small but not equal to $(0,0)$. Let us define 
\begin{align*}
g_t(x,y) & = -2f_t(x,y)+x f_{t_x}(x,y) +y f_{t_y}(x,y). 
\end{align*}
We note that $f_t(x,y)$ and $g_t(x,y)$ have the same cubic term in the Taylor expansion. Furthermore, $g_t(x,y)$ does not contain any 
quadratic term. Since $([f], [\eta], l_{q_{\delta}}) \in \widehat{D}_4$, we conclude that 
$f_{t_{30}} \neq 0$. 
Let 
\begin{align*}
x&:= \hat{x}  + E_1(\hat{x}, \hat{y}) \qquad \textnormal{and} \qquad y:= \hat{y}  + E_2(\hat{x}, \hat{y}) 
\end{align*}
be changes change of coordinates (where $E_1$ and $E_2$ are second order terms), such that 
\begin{align*}
g_t &= \frac{f_{t_{30}}}{6}(\hat{x}- A_1 \hat{y})(\hat{x}- A_2 \hat{y}) (\hat{x}- A_3 \hat{y}) 
\end{align*}
There are three solutions to $g_t = 0$, given by $\hat{y}=u$ and $\hat{x}= A_i \hat{u}$, for $i=1,2$ and $3$. 
Converting back in terms of $x$ and $y$, we conclude that the solutions to $g_t =0$ are given by 
\begin{align*}
y& = u \qquad \textnormal{and} \qquad x = A_i u + O(|u|^2). 
\end{align*}
Let us consider the solution $x = A_1 u + O(|u|^2)$; the other two cases can be dealt with similarly. We plug this 
solution into the equations $f_{t_x} =0$ and $f_{t_y} =0$ and solve for $f_{t_{11}}$ and $f_{t_{02}}$ in terms of $u$. 
Doing that, we get the solutions to \cref{eq_s1} are given by  
\begin{align}
y &= u, \qquad x = A_1 u + O(|u|^2), \nonumber \\ 
f_{t_{11}}  & = -\frac{f_{t_{30}}}{6}(A_1-A_2)(A_1-A_3) u + O(|u|^2) \qquad \textnormal{and} \qquad 
f_{t_{02}} = \frac{f_{t_{30}}}{3} A_1 (A_1-A_2) u + O(|u|^2) \label{f11_sol_8}
\end{align}
and two more similar solutions corresponding to $x = A_2 u + O(|u|^2)$ and $x = A_3 u + O(|u|^2)$. This proves 
the first assertion of \cref{one_node_and_one_PA1_claim_D4}. Furthermore, since $f_{t_{30}} \neq 0$ and $A_1, A_2$ and $A_3$ 
are distinct, we conclude using \cref{f11_sol_8}  that $f_{t_{11}} \neq 0$; this proves \cref{one_node_one_PA2_int_PA3_empty_2}. \qed \\
\hf \hf It remains to compute the multiplicity. 
We claim the each point of $(A_1^{\delta-1}\circ \widehat{D}_4^{\#})\cap \mu$ 
contributes $3$ to the Euler  class in \cref{npa2_Euler_class_formula}. 
Using \cref{f11_sol_8} 
we conclude that the multiplicity 
from each branch 
is the number of small solutions $u$ to the equation 
\begin{align*}
-\frac{f_{t_{30}}}{6}(A_1-A_2)(A_1-A_3) u + O(|u|^2) & = \varepsilon. 
\end{align*}
This number is $1$ and hence, 
the total multiplicity is $3$. Finally, we note that since $\mu$ is a generic cycle all points of 
$(A_1^{\delta-1}\circ \widehat{D}_4)\cap \mu$ will actually belong to $(A_1^{\delta-1}\circ \widehat{D}_4^{\#})\cap \mu$.\qed \\ 
\hf \hf Before proceeding further, note that we have proved 
\begin{align}
\Big(\{([f], [\eta], q_1, \ldots,q_{\delta}, l_{q_{\delta+1}}) \in \overline{A_1^{\delta}\circ \PP A}_1: 
q_{\delta-1}=q_{\delta} = q_{\delta+1}\}\Big) \cap \Big(A_1^{\delta-1}\circ \widehat{D}_4^{\#}\Big) & = \emptyset. \label{k4}
\end{align}
To see why that is so, our proof of the claim shows that the family we constructed can not have a third node. \\ 
\hf \hf Next, let us prove equations \eqref{hh3}, \eqref{hh4} and \eqref{hh5} 
(i.e. we  will analyze what happens when three points come together). Let us start with the proof of 
\eqref{hh3}. Let us show that 
\begin{align}
\Big(\{([f], [\eta], q_1, \ldots,q_{\delta}, l_{q_{\delta+1}}) \in \overline{A_1^{\delta}\circ \PP A}_1: 
q_{\delta-1}=q_{\delta} = q_{\delta+1}\}\Big) \cap \Big(A_1^{\delta-1}\circ \PP A_4\Big) & = \emptyset. \label{k6}
\end{align}
We note that \cref{k6} immediately implies \cref{hh3}. In order to prove \cref{k6}, it suffices to prove the following claim: 
\begin{claim}
\label{two_nodes_and_one_PA1_claim_not_PA4}
Let $([f], [\eta], q_1, \ldots, \ldots, q_{\delta-2}, l_{q_{\delta-1}}) \in A_1^{\delta-2}\circ \PP A_4$. Then there 
does not exist any point 
\begin{align}
\big([f_t], [\eta_t], q_1(t), \ldots, q_{\delta-2}(t); q_{\delta-1}(t), q_{\delta}(t), l_{q_{\delta+1}(t)}\big) \in 
A_1^{\delta}\circ \PP A_1 \label{sol_const_ag3}
\end{align}
sufficiently close to $([f], [\eta], q_1, \ldots, \ldots, q_{\delta-1}; q_{\delta}, l_{q_{\delta}})$.
\end{claim}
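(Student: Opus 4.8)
The plan is to deduce Claim~\ref{two_nodes_and_one_PA1_claim_not_PA4} from the fact, already established in \cref{three_nodes_canot_be_a4} (via the last part of Claim~\ref{two_and_three_node_claim}), that three distinct nodes cannot collapse onto an $A_4$ singularity. Concretely, suppose a point of $A_1^{\delta}\circ\PP A_1$ as in \cref{sol_const_ag3} existed, sufficiently close to the given $A_1^{\delta-2}\circ\PP A_4$ configuration. Then $q_{1}(t),\ldots,q_{\delta-2}(t)$ would track the $\delta-2$ nodes of the limit, and the remaining marked points $q_{\delta-1}(t),q_{\delta}(t),q_{\delta+1}(t)$ --- which are three distinct nodes of $f_t$ (the first two by $A_1^{\delta}$, the last because $\PP A_1\subset A_1$) --- would all lie in an arbitrarily small neighbourhood of the $A_4$ point; the marked direction $l_{q_{\delta+1}(t)}$ only imposes an extra condition and may be ignored. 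So it suffices to rule out three distinct nodes near an $A_4$ singular point, and I re-run the relevant argument.

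Following the setup of Claims~\ref{two_and_three_node_claim} and \ref{three_node_claim_d4}, I work in an affine chart carrying $\mathbb{P}^2_{\eta_t}$ to $\mathbb{C}^2_{z}$ and $q_{\delta}(t)$ to the origin, with coordinates chosen so that $\partial_x$ is the kernel direction of $\nabla^2 f|_0$. Since $q_{\delta}(t)$ is a node, the quadratic part of $f_t$ at the origin is nondegenerate; and since $f$ has an $A_4$ singularity at the origin with kernel direction $\partial_x$, its Hessian has rank one with $f_{02}\neq 0$, so $f_{t_{02}}\neq 0$ for small $t$. I may therefore complete the square, $\hat y:=y+B(x)$, and rescale, $\hat{\hat{y}}:=\sqrt{\varphi(x,\hat y)}\,\hat y$, exactly as in those proofs, to reach
\[
\hat{f}_t(x,\hat{\hat{y}})\;=\;\hat{\hat{y}}^2+\frac{\mathcal{B}_2^{f_t}}{2!}x^2+\frac{\mathcal{B}_3^{f_t}}{3!}x^3+\frac{\mathcal{B}_4^{f_t}}{4!}x^4+\frac{\mathcal{B}_5^{f_t}}{5!}x^5+\ldots,
\]
where, since the limit is an $A_4$ singularity, $\mathcal{B}_2^{f_t},\mathcal{B}_3^{f_t},\mathcal{B}_4^{f_t}$ are small (close to zero) while $\mathcal{B}_5^{f_t}$ is bounded away from zero; this is the only place where $A_4$, rather than merely $A_3$, is used.

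The node count then proceeds as follows. A node of $\hat{f}_t$ at a small point $(x_0,\hat{\hat{y}}_0)\neq(0,0)$ must have $\hat{\hat{y}}_0=0$, and then $x_0$ is a root of multiplicity exactly two of $P(x):=\hat{f}_t(x,0)$. Writing $P(x)=x^2\,Q(x)$ with $Q(x)=\tfrac{\mathcal{B}_2^{f_t}}{2}+\tfrac{\mathcal{B}_3^{f_t}}{6}x+\tfrac{\mathcal{B}_4^{f_t}}{24}x^2+\tfrac{\mathcal{B}_5^{f_t}}{120}x^3+O(x^4)$, the coefficients of $1,x,x^2$ in $Q$ tend to zero while the coefficient of $x^3$ stays bounded away from zero; hence, by the Weierstrass preparation theorem, $Q$ has exactly three roots (counted with multiplicity) in a fixed small disc, all tending to $0$, i.e. $Q$ behaves locally like a cubic with small lower coefficients. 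The origin accounts for the factor $x^2$, and each of the two extra distinct small nodes would contribute a distinct nonzero small root of $Q$ of multiplicity at least two --- impossible for a local cubic. This contradiction proves Claim~\ref{two_nodes_and_one_PA1_claim_not_PA4}, hence \cref{k6} and \cref{hh3}. The one step requiring care is the uniform root count as the family parameter varies, but this is precisely the Rouch\'e/Weierstrass argument already used to derive \cref{A6_nhbd}, so no genuinely new difficulty arises; the essential content is simply that three nodes cannot collapse onto a singularity of codimension only four.
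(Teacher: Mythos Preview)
Your proof is correct. Your opening observation---that forgetting the marked direction $l_{q_{\delta+1}(t)}$ immediately reduces the claim to \cref{three_nodes_canot_be_a4} (three distinct nodes cannot collide onto an $A_4$)---is a valid and efficient shortcut that the paper does not make explicit; the paper instead centers coordinates at the $\PP A_1$ point $q_{\delta+1}(t)$ (so that $f_{t_{20}}=0$ from the $\PP A_1$ condition), passes to the usual normal form $\hat f_t=\hat{\hat y}^2+P(x)$, and then cites the explicit solution \cref{A6_nhbd} (the same formulas used to prove \cref{three_nodes_canot_be_a4}) to force $\mathcal{B}_5^{f_t}$ small, contradicting $\PP A_4$. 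Your re-run of the argument via Weierstrass/Rouch\'e root-counting is essentially equivalent; the only cosmetic differences are that you center at $q_\delta(t)$ rather than $q_{\delta+1}(t)$ and phrase the contradiction as ``a local cubic cannot carry two distinct double roots'' rather than quoting \cref{A6_nhbd} directly.
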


\noindent \textbf{Proof:} Let us continue with the setup of \cref{one_node_andone_PA1_claim}. 
As before, since $f_{t_{02}} \neq 0$, we can make a change of coordinates $\hat{y} :=y+ B(x)$ and write $f_t$ as 
\begin{align*}
f_t (x, y(x, \hat{y})) &=  
\varphi(x,\hat{y})\hat{y}^2 + \frac{\mathcal{B}_2^{f_t}}{2!} x^2 + 
\frac{\mathcal{B}_3^{f_t}}{3!} x^3 + \frac{\mathcal{B}_4^{f_t}}{4!} x^4 +\frac{\mathcal{B}_5^{f_t}}{5!} x^5+\frac{\mathcal{B}_6^{f_t}}{6!} x^6+ \mathcal{R}(x)x^7  
\end{align*}
where $\mathcal{B}^{f_t}_k$ are as defined in \cref{Bf_k_formula}, $\varphi(0,0) \neq 0$ and 
$\mathcal{R}(x)$ is a holomorphic function defined in a neighborhood of the origin. 
Let us make a further change of coordinates and denote 
\begin{align*}
\hat{\hat{y}}&:= \sqrt{\varphi(x, \hat{y})} \hat{y}.
\end{align*}
as in the Proof of \cref{one_node_andone_PA1_claim}. Let us denote the polynomial $f_t$ by $\hat{f}_t$ which is a polynomial in two variables $x$ and $\hat{\hat{y}}$. Hence, 
\begin{align*}
\hat{f}_t (x, \hat{\hat{y}}) &=  
\hat{\hat{y}}^2 + \frac{\mathcal{B}_2^{f_t}}{2!} x^2 + 
\frac{\mathcal{B}_3^{f_t}}{3!} x^3 + \frac{\mathcal{B}_4^{f_t}}{4!} x^4 +\frac{\mathcal{B}_5^{f_t}}{5!} x^5+\frac{\mathcal{B}_6^{f_t}}{6!} x^6+ \mathcal{R}(x)x^7.  
\end{align*} 
We claim that there does not exist any solutions to the set of equations 
\begin{align}
\hat{f}_t(u_1, v_1) & = 0, ~~\hat{f}_x(u_1, v_1) =0, ~~\hat{f}_{\hat{\hat{y}}}(u_1, v_1) =0 \qquad \textnormal{and} \label{b1}\\ 
\hat{f}_t(u_2, v_2) & = 0, ~~\hat{f}_x(u_2, v_2) =0, ~~\hat{f}_{\hat{\hat{y}}}(u_2, v_2) =0, \label{b2}
\end{align}
where $(u_1, v_1)$ and $(u_2, v_2)$ and $(0,0)$ are all distinct, but close to each other.\\
\hf \hf We now note that the only solutions to the set of equation \cref{b1} and \cref{b2} is given by 
 \begin{align}
v_1, v_2 & =0, \nonumber \\
\B^{f_t}_2  & = \frac{1}{360} \B^{f_t}_6 u_1^2 u_2^2 + O(|(u_1, u_2)|^5),  \quad 
\B^{f_t}_3 = -\frac{1}{60} \B^{f_t}_6(u_1^2 u_2 + u_1 u_2^2)+ O(|(u_1, u_2)|^4), \nonumber \\
\B^{f_t}_4 &= \frac{1}{30}\B^{f_t}_6(u_1^2+ 4 u_1 u_2 + u_2^2)+ O(|(u_1, u_2)|^3) \quad \textnormal{and} \quad 
\B^{f_t}_5 = -\frac{1}{3}\B^{f_t}_6 (u_1+u_2)+ O(|(u_1, u_2)|^2). \label{A6_nhbd_PA1}
\end{align}
To see why this is so, we simply note that \cref{b1} and \cref{b2} are the same as \cref{three_node_uv1} and \cref{three_node_uv2}; 
hence, the argument is exactly the same as how we justified 
\cref{A6_nhbd} is the solution to \cref{b1} and \cref{b2}. \\ 
\hf \hf We now note that $v_1,v_2$ are both zero; hence  $u_1$ and $u_2$ are both nonzero, but small. 
Hence, $\B^{f_t}_5$ is close to zero. This is a contradiction, since $([f], [\eta], l_{q_{\delta}}) \in \PP A_4$.

\hf \hf Next, let us prove \eqref{hh4}. We will prove the following claim:
\begin{claim}
\label{two_nodes_and_one_PA1_claim_PD4}
Let $([f], [\eta], q_1, \ldots, \ldots, q_{\delta-2}, l_{q_{\delta-1}}) \in A_1^{\delta-2}\circ \PP D_4$. Then there exists points 
\begin{align}
\big([f_t], [\eta_t], q_1(t), \ldots, q_{\delta-3}(t); q_{\delta-2}(t), q_{\delta-1}(t), q_{\delta}(t), l_{q_{\delta+1}(t)}\big) \in 
A_1^{\delta}\circ \PP A_1 \label{sol_const_2_ag}
\end{align}
sufficiently close to $([f], [\eta], q_1, \ldots, \ldots, q_{\delta-2};  q_{\delta-1}, q_{\delta-1}, l_{q_{\delta-1}})$.
Furthermore, \textit{every} such solution 
satisfies the condition 
\begin{align}
\nabla^2f|_{q_{\delta+1}}(v,w) &\neq 0, \label{one_node_one_PA2_int_PA3_empty_2_ag}
\end{align}
if $v$ is a nonzero vector that belongs to  $l_{q_{\delta+1}(t)}$ 
and $w$ is a nonzero vector that belongs to 
$T\mathbb{P}^2_{\eta}|_{q_{\delta+1}(t)}/l_{q_{\delta+1}(t)}$.  
In other words,
\begin{align*}
\big([f_t], [\eta_t], q_1(t), \ldots, q_{\delta-3}(t); q_{\delta-2}(t), q_{\delta-1}(t), q_{\delta}(t), l_{q_{\delta+1}(t)}\big) & \not\in A_1^{\delta} \circ \PP A_2. 
\end{align*}
\end{claim}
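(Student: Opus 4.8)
The plan is to reduce the statement to a computation in a small neighbourhood of the $\PP D_4$-point $q_{\delta-1}$, and then to reproduce, with minor changes, the argument of \Cref{three_node_claim_d4} (three nodes colliding into a $D_4$), building in the $\PP$-structure exactly as in the proof of \Cref{one_node_and_one_PA1_claim_D4}. First I would pass to an affine chart sending $\mathbb{P}^2_{\eta_t}$ to $\mathbb{C}^2_{z}$, the point $q_{\delta+1}(t)$ to the origin, and choosing coordinates so that $\partial_x \in l_{q_{\delta+1}(t)}$; this is legitimate since $l_{q_{\delta+1}(t)}$ tends to $l_{q_{\delta-1}}$, one of the three branch directions of the $D_4$. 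In this chart the hypothesis $([f],[\eta],q_{\delta-1}) \in \PP D_4$ with marked direction $[\partial_x]$ becomes $f_{20}=f_{11}=f_{02}=0$ together with $f_{30}=0$ (the latter being exactly the statement that $[\partial_x]$ is a root of the cubic), so that the cubic part of $f$ is $\frac{f_{21}}{2}\, y (x-A_1 y)(x-A_2 y)$ with $f_{21}\neq 0$ and $A_1\neq A_2$; hence for $f_t$ close to $f$ the quantities $f_{t_{11}},f_{t_{02}},f_{t_{30}}$ are small while $f_{t_{21}}$ remains close to $f_{21}\neq 0$. As in \Cref{three_node_claim_d4}, the $\delta-2$ surviving nodes $q_1(t),\dots,q_{\delta-2}(t)$ can be produced at the very end by multiplying a low-degree local model by a factor $\prod_i\big((x-x_i)^2+(y-y_i)^2\big)$ — which only needs $d\geq 4+2(\delta-2)$, a consequence of $d\geq 2\delta+2$ — so it suffices to build, near the origin, a plane curve having a node at the origin whose tangent cone contains the line $\{y=0\}$ together with two further distinct nodes close to the origin, and to show that the origin can never become a cusp.

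For the construction I would set $g_t := -2 f_t + x f_{t_x} + y f_{t_y}$ as in \eqref{g_defn}; it has no quadratic part and the same cubic part as $f_t$. Since $f_{t_{30}}$ is now only small (rather than the generic nonzero value occurring in \Cref{three_node_claim_d4}), the factorization of this cubic is the asymmetric one $g_t = c(t)\,(x-\alpha_1(t)y)(x-\alpha_2(t)y)(y-\beta(t)x) + O(|(x,y)|^4)$ with $c(t)\to \frac{f_{21}}{2}\neq 0$, $\alpha_i(t)\to A_i$, $\alpha_1\neq\alpha_2$, and $\beta(t)\to 0$ — so the marked line $\{y=0\}$ reappears as the branch $y=\beta(t)x+O(x^2)$. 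After a change of coordinates making the factorization exact, $g_t=0$ has three smooth branches through the origin; I would then impose the node equations $f_{t_x}=f_{t_y}=0$ along two of them at once, one of the two being the branch tangent to $\{y=0\}$, and solve for $f_{t_{11}},f_{t_{02}}$ and the two branch parameters as holomorphic functions of a single free parameter $u$. This produces an explicit solution — the analogue of \eqref{d4_three_nodes_soln} and \eqref{f11_sol_8} — proving the existence asserted. Counting the labelled solutions: the origin node has $\{y=0\}$ as a forced tangent and its other tangent is close to one of $\{x=A_1y\},\{x=A_2y\}$ (two choices), while the remaining pair of nodes can be ordered in two ways, giving exactly $4$ solution branches — consistent with the contribution $4$ of $\overline{\A_1^{\delta-2}\circ\PP\D}_4\cap\mu$ entering \eqref{hh4}.

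It remains to verify that the origin is never a cusp. From the explicit solution, $f_{t_{11}}$ equals a nonzero constant — built out of $c(t)\to\frac{f_{21}}{2}$ and the differences among $\alpha_1,\alpha_2,\beta$ — times $u$, plus a term of order $u^2$; hence $f_{t_{11}}\neq 0$ for $u$ small and nonzero, and this is nonzero precisely because $f_{21}\neq 0$. Since $f_{t_{20}}=0$, the Hessian of $f_t$ at the origin has determinant $-f_{t_{11}}^2\neq 0$, so the origin is a genuine node of $f_t$. For a node the form $\nabla^2 f_t|_{0}$ is nondegenerate, so $\nabla^2 f_t|_{0}(v,\cdot)$ is a nonzero linear functional; as $v\in l_{q_{\delta+1}(t)}$ is isotropic ($\{y=0\}$ being a branch) this functional vanishes exactly on $l_{q_{\delta+1}(t)}$, whence $\nabla^2 f_t|_{0}(v,w)\neq 0$ for every $w$ representing a nonzero class in $T\mathbb{P}^2_{\eta}|_{q_{\delta+1}(t)}/l_{q_{\delta+1}(t)}$. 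This is exactly the statement that the constructed point does not lie in $A_1^{\delta}\circ\PP A_2$.

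The step I expect to be the main obstacle is the middle one: checking that the two node conditions on two distinct branches of $g_t=0$ are simultaneously solvable with all the data staying close to the $\PP D_4$-configuration, and that the number of solution branches is exactly $4$. This computation is structurally identical to the corresponding one in \Cref{three_node_claim_d4}, but the fact that $f_{t_{30}}$ is only small (which forces the asymmetric factorization, with one branch hugging the marked line $\{y=0\}$) together with the imposed constraint $f_{t_{20}}=0$ alters the bookkeeping in an essential way; in particular one must confirm that $f_{t_{02}}$ and the two branch parameters genuinely solve out in terms of the single surviving parameter $u$.
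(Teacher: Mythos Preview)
Your proposal is correct and follows essentially the same route as the paper: pass to an affine chart with the $\PP A_1$-point at the origin and $\partial_x$ the marked direction, use $g_t:=-2f_t+xf_{t_x}+yf_{t_y}$ to strip the quadratic part, factor the cubic of $f_t$ into three linear pieces one of which is close to $\{y=0\}$, place the two extra nodes along two of the three branches of $g_t=0$, solve explicitly for $f_{t_{11}},f_{t_{02}}$ and the branch parameters, and read off $f_{t_{11}}\neq 0$ from the resulting formula.

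Two small remarks. First, the paper takes the symmetric factorization $\tfrac{f_{t_{03}}}{6}(y-A_1x)(y-A_2x)(y-A_3x)$ with $A_3$ small (assuming $f_{03}\neq 0$), whereas you factor out $y$ directly via $\tfrac{f_{21}}{2}\,y(x-A_1y)(x-A_2y)$; these are equivalent normalizations and yours is arguably the more natural one here since $f_{21}\neq0$ is forced by the $\PP D_4$ condition. Second, the point you flag as ``the main obstacle'' is exactly where the paper does the only nontrivial work: it \emph{proves} (rather than asserts) that the two external nodes cannot both sit on the two generic branches $A_1,A_2$, by plugging the second node on branch $A_2$ into $f_{t_y}$ and obtaining a nonzero value; this forces one external node onto the branch near $\{y=0\}$, and then the explicit formula $f_{t_{11}}=-\tfrac{f_{t_{03}}}{6}A_1A_2\,v+O(v^2)$ gives both the non-cusp conclusion and the multiplicity~$4$. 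You have correctly anticipated this step; carrying it out completes the argument.
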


\noindent \textbf{Proof:} Following the setup of the proof of claim 
\ref{one_node_and_one_PA1_claim_D4}, 
let us write down the Taylor expansion of $f_t$ around the point $(0,0)$, namely  
\begin{align*}
f_t(x,y)&= f_{t_{11}}xy + \frac{f_{t_{02}}}{2} y^2 + \frac{f_{t_{30}}}{6} x^3 + \frac{f_{t_{21}}}{2} x^2y + 
\frac{f_{t_{12}}}{2} x y^2 + \frac{f_{t_{03}}}{6} y^3+\ldots 
\end{align*}
Since $([f_t], [\eta_t], l_{q_{\delta}(t)}) \in \PP A_1$, we conclude that $f_{t_{20}}$ is zero. Next, since 
$([f], [\eta], l_{q_{\delta}}) \in \PP D_4$, we conclude that $f_{11}$, $f_{02}$ and $f_{30}$ are zero; hence 
$f_{t_{11}}$, $f_{t_{02}}$ and $f_{t_{30}}$ are small (close to zero). Constructing points on the right hand side of 
\cref{sol_const_2_ag} amounts to finding solutions to the set of equations 
\begin{align}
f_t(x_1, y_1)&=0, \qquad f_{t_x}(x_1, y_1)=0, \qquad f_{t_{y}}(x_1, y_1) =0 \qquad \textnormal{and} \label{eq1_ag} \\ 
f_t(x_2, y_2)&=0, \qquad f_{t_x}(x_2, y_2)=0, \qquad f_{t_{y}}(x_2, y_2) =0, \label{eq2_ag}
\end{align}
where $(0,0), (x_1, y_1)$ and $(x_2, y_2)$ are all distinct (but close to each other). As before, we define 
\[g_t(x,y):= xf_{t_{x}}(x,y) + yf_{t_{y}}(x,y) - 2f_t(x,y).\] 
We note that $g_t$ has no quadratic term and has the same cubic term 
as $f_t$. The cubic term of $f$ can be written as either 
$\frac{f_{{03}}}{6}(y-A_1(0) x)(y-A_2(0) x)y$ (if $f_{{03}} \neq 0$) or it can be written as 
$\frac{xy}{2}(f_{21}x + f_{12} y)$ (if $f_{{03}} = 0$). We will assume the former case; the latter case can be dealt with 
similarly. Hence, we can write $g_t$ as 
\begin{align*}
g_t(x,y)& = \dfrac{f_{t_{03}}}{6}(y-A_1x)(y-A_2 x)(y-A_3x) + E(x,y), 
\end{align*}
where $E$ is a fourth order term. Let us assume that $A_3$ is close to zero. We also note that since $f_{t_{21}} \neq 0$, 
hence $A_1$ and $A_2$ are both nonzero. Using the equation $g_t = 0$, let us consider the solution 
\begin{align*}
x&= u \qquad  \textnormal{and} \qquad y = A_1 u + O(|u|^2).  
\end{align*}
Let us now use $f_{t_x}(x,y)=0$ and solve for $f_{t_{11}}$ in terms of $u$. Doing that, we get 
\begin{align*}
f_{t_{11}} & = \dfrac{f_{t_{03}}}{6}(A_1^2-A_1A_2-A_1A_3+ A_2A_3)u + O(|u|^2). 
\end{align*}
Plugging in this value of $f_{t_{11}}$ into the equation $f_{t_{y}}$ and solving for $f_{t_{02}}$, we get that 
\begin{align*}
f_{t_{02}} & = \dfrac{f_{t_{03}}}{6}\Big(-2 A_1 + 2 A_2 + 2 A_3 - \frac{2 A_2 A_3}{A_1}\Big) u + O(|u|^2). 
\end{align*}
Let us now try to produce a second node. We will justify shortly that $x=v$ and $y=A_2v + O(|v|^2)$ is a not a possible solution. 
Hence, let us consider $x=v$ and $y=A_3v + O(|v|^2)$. Plugging this into $f_{t_y}(x,y)=0$ and solving for $u$ in terms of $v$, 
we conclude that 
\begin{align*}
u & = \Big(\frac{A_1(A_3-A_2)}{(A_1-A_2)(A_1-2A_3)}\Big)v + O(|v|^2). 
\end{align*}
Plugging in this value for $u$ into $f_{t_x}(x,y)=0$ and solving for $A_3$, we conclude that 
\begin{align*}
A_3 & = O(|v|). 
\end{align*}
Plugging in the value of $A_3$ into $u$ and then plugging that back into $f_{t_{11}}$ and $f_{t_{02}}$, we conclude that 
\begin{align*}
u& = \frac{A_2}{A_2-A_1} v + O(|v|^2), \qquad  
f_{t_{11}} = -\dfrac{f_{t_{03}}}{6}A_1A_2 v + O(|v|^2) \qquad \textnormal{and} \qquad  
f_{t_{02}} = \dfrac{f_{t_{03}}}{3}A_2 v + O(|v|^2).
\end{align*}
There are four ways to construct such solutions (interchange $(A_1, A_3)$, with $(A_2, A_3)$). Furthermore, we can permute the nodal points. 
From the expression for $f_{t_{11}}$ we see that the order of vanishing is $1$; hence the total multiplicity is $4$. \\ 
\hf \hf It remains to show why we reject the solution $x=v$ and $y = A_2 v+ O(|v|^2)$. If we take that solution, then we plug it in 
$f_{t_x}=0$, then solving for $u$ (in terms of $v$), we conclude that 
\begin{align*}
u & = \Big(\frac{A_3-A_2}{A_1-A_3}\Big)v + O(|v|^2) 
\end{align*}
Plugging this into $f_{t_{y}}$, we conclude that 
\begin{align*}
f_{t_y} & = \dfrac{f_{t_{03}}}{3} \Big(  \frac{(A_1-A_2)^2(A_3-A_2)}{A_1}\Big) v^2 + O(|v|^2). 
\end{align*}
This is clearly nonzero, if $v$ is small and nonzero. Hence, we reject the solution corresponding to $x=v$ and $y = A_2 v + O(|v|^2)$. 
This completes the proof. \\ 
\hf \hf Finally, let us justify \cref{hh5}. This follows from \cref{k4}. This completes the proof of \Cref{npa2}. \qed


\subsection{Proof of \Cref{npa3}: computation of $N(A_1^{\delta} \mathcal{P} A_3)$ when $0\leq \delta \leq 1$} 
We will justify our formula for 
$N(A_1^{\delta} \mathcal{P} A_3,r, s, n_1, n_2, n_3, \theta)$, when $0 \leq \delta \leq 1$.
Recall that 
\begin{align*}
\A^{\delta}_1 \circ \overline{\PP \A}_2 := \{ ([f], [\eta], q_1, \ldots, q_{\delta}, 
l_{q_{\delta+1}}) \in \mathcal{S}_{\mathcal{D}_{\delta}}\times_{\mathcal{D}} \mathbb{P} W_{\mathcal{D}}: 
&\textnormal{$f$ has a singularity of type $\A_1$ at $q_1, \ldots, q_{\delta}$}, \\ 
                   & ([f], [\eta], l_{q_{\delta+1}}) \in \overline{\PP \A}_2, ~~\textnormal{$q_1, \ldots, q_{\delta+1}$ all distinct}\}. 
\end{align*}
Let $\mu$ be a generic cycle, representing 
the class 
\begin{align*}
[\mu] = \mathcal{H}_L^r \cdot \mathcal{H}_p^s \cdot a^{n_1} \lambda^{n_2} (\pi_{\delta+1}^*H)^{n_3} (\pi_{\delta+1}^*\lambda_{W})^{\theta}. 
\end{align*}
We now define a section of the following bundle 
\begin{align}
\Psi_{\PP \A_3}:  \A^{\delta}_1 \circ \overline{\PP \A}_2 
\lra \mathbb{L}_{\PP \A_3} & := \gamma_{\DD}^\ast\otimes\gamma_{W}^{\ast 3}\otimes \gamma_{\mathbb{P}^3}^{* d}, \qquad \textnormal{given by} 
\nonumber \\
\{\Psi_{\PP \A_3}([f], [\eta], q_1,\ldots,q_\delta,l_{q_{\delta+1}})\}(f\otimes v^{\otimes 3}) &:= 
\nabla^3f|_{q_{\delta+1}}(v,v,v).  \nonumber
\end{align}
Analogous to \cite[Lemma 6.1]{BM13_2pt_published}, 
we conclude that for $d \geq 4$, 
\begin{align}
\overline{\PP \A}_2 &= \PP \A_2 \cup \overline{\PP \A}_3 \cup \overline{\widehat{D}}_4. \label{pa2_closure}
\end{align}
Furthermore, analogous to \cite[Lemma 6.3]{BM13_2pt_published} 
we conclude that for $d \geq 4$, 
\begin{align}
\overline{\overline{\A^{\delta}_1} \circ \PP \A}_2 &= (\overline{\A^{\delta}_1} \circ \PP \A_2) \cup \overline{\A^{\delta}_1} \circ (\overline{\PP \A}_2- \PP \A_2) \cup \overline{\A^{\delta-1}_1} \circ ( \Delta \overline{\PP \A}_4 \cup \Delta \overline{\widehat{D}}_5). \label{pa2_closure_again}
\end{align}
Let us define 
\begin{align*}
\mathcal{B} &:= \overline{A_1^{\delta} \circ \overline{\PP A}}_2 - \A^{\delta}_1 \circ (\PP\A_2\cup \overline{\PP A}_3). 
\end{align*}
We will show shortly that the section 
$\Psi_{\PP \A_3}$ vanishes on the points of  
$\A_1^{\delta}\circ \PP A_3$ transversally. 
Hence,  
\begin{align}
\lan e(\mathbb{L}_{\PP \A_3}), 
~~[\overline{\A^{\delta}_1 \circ \overline{\PP\A}}_2] \cap [\mu] \ran & = \N(\A_1^{\delta}\PP \A_3, n_1, n_2,n_3, \theta)  
+ \mathcal{C}_{\mathcal{B}\cap \mu}. 
\label{npa3_Euler_class_formula}
\end{align}
We now give an explicit description of $\mathcal{B}$. 
Let us first define 
\begin{align*}
\mathcal{B}_0 &:= \{ ([f], [\eta], q_1, \ldots q_{\delta}, l_{q_{\delta+1}}) \in \mathcal{B}: q_1, q_2 \ldots q_{\delta+1} 
~~\textnormal{are all distinct}\}. 
\end{align*}
In other words, $\mathcal{B}_0$ is that component of the boundary, where all the points are still distinct.   
By \cref{pa2_closure}, we conclude that 
\begin{align*}
\mathcal{B}_0 &= \overline{A_1^{\delta}} \circ \overline{\widehat{D}}_4. 
\end{align*}
If we intersect $\mathcal{B}_0$ with $\mu$ then we will get a finite set of points. 
Since the representative $\mu$ is generic, we conclude that the third derivative along 
$v$ will not vanish, i.e. the section $\Psi_{\PP A_3}$ will not vanish on those points. 
Hence, $\mathcal{B}_0\cap \mu$ does not contribute to the Euler class. \\
\hf\hf Next, let us 
consider the components of 
$\mathcal{B}$ where one (or more) of the $q_i$ become equal to the last point $q_{\delta+1}$. 
Define $\mathcal{B}(q_{i_1}, \ldots q_{i_k}, l_{q_{\delta}})$ as before. 
Analogous to the proof of \cite[Lemma 6.3]{BM13_2pt_published}, 
we conclude that 
\begin{align*}
\B (q_1, l_{q_{\delta +1 }}) & \approx \overline{\A_1^{\delta-1} \circ \PP \A}_4 \cup 
\overline{\A_1^{\delta-1} \circ \widehat{D}}_5.
\end{align*}
Furthermore, analogous to the proof of \cite[Corollary 6.13, Page 700]{BM13_2pt_published}, 
we conclude that 
the contribution to the Euler class from each of the points of 
$\overline{\A_1^{\delta-1} \circ \PP \A}_4 \cap \mu$ is $2$. 
Finally, we note that the section $\Psi_{\PP A_3}$ does not vanish on $\overline{\A_1^{\delta-1} \circ \widehat{D}}_5 \cap \mu$, since $\mu$ is generic.  
Hence, the total contribution from all the components of type $\B (q_{i_1}, l_{q_{\delta +1 }})$ equals
\bgd
2\binom{\delta}{1}N(\A_1^{\delta-1}\PP\A_4,n_1,n_2,n_3, \theta).
\edd
Plugging in this in \cref{npa3_Euler_class_formula}
gives us the formula of \cref{npa3}. \\ 
\hf \hf It just remains to prove the transversality claim. 
This follows from following the setup of proof of transversality in 
Theorem  \cref{npa2}. We consider the polynomial 
\begin{align*}
\rho_{30}&:= (X-X_1)^2(X-X_2)^2\ldots \cdot (X-X_{\delta})^2 X^3 Z^{d-2\delta-3}
\end{align*}
and the corresponding curve $\gamma_{30}(t)$. Transversality follows by computing the derivative of the section $\Psi_{\PP A_3}$ 
along the curve $\gamma_{30}(t)$ as before. \qed 

\subsection{Proof of \Cref{npa4}: computation of $N(\mathcal{P} A_4)$} 
We will now justify our formula for 
\newline $N(\mathcal{P} A_4,r, s, n_1, n_2, n_3, \theta)$. 
Let $\mu$ be a generic cycle, representing 
the class 
\begin{align*}
[\mu] = \mathcal{H}_L^r \cdot \mathcal{H}_p^s \cdot a^{n_1} \lambda^{n_2} (\pi^*H)^{n_3} (\pi^*\lambda_{W})^{\theta}. 
\end{align*}
Let $v \in \gamma_{W}$ and $w \in \pi^*W/\gamma_{W}$ be two fixed nonzero vectors. 
Let us introduce the following abbreviation:  
\begin{align*}
f_{ij} & := \nabla^{i+j} f|_{q}
(\underbrace{v,\cdots v}_{\textnormal{$i$ times}}, \underbrace{w,\cdots w}_{\textnormal{$j$ times}}).
\end{align*}
We now define a section of the following bundle 
\begin{align}
\Psi_{\PP \A_4}:  \overline{\PP \A}_3 
\lra \mathbb{L}_{\PP \A_4} & := 
\gamma_{\DD}^{\ast 2}\otimes\gamma_W^{\ast 4}\otimes(W/\gamma_{W})^{\ast 2}\otimes \gamma_{\mathbb{P}^3}^{\ast 2d}, \label{psiPA_4} \\
\{\Psi_{\PP \A_4}([f], l_{q})\}(f^{\otimes 2}
\otimes v^{\otimes 4}\otimes w^{\otimes 2}) &:= f_{02}A_4^f, \qquad \textnormal{where} \qquad 
\A^{f}_4 := f_{40}-\frac{3 f_{21}^2}{f_{02}}. \label{A^f_4}
\end{align}
Analogous to \cite[Lemma 6.1]{BM13_2pt_published},  we conclude that 
\begin{align}
\overline{\PP A}_3 & = \PP A_3 \cup \overline{\PP A}_4 \cup \overline{\PP D}_4.  \label{pa3_closure}
\end{align}
Hence, let us define 
\begin{align*}
\mathcal{B} &:= \overline{\PP A}_3 - \PP A_3 \cup \overline{\PP A}_4. 
\end{align*}
We will show shortly that 
the section 
$\Psi_{\PP \A_4}$ vanishes on the points of  
$\PP A_4$ transversally. 
Hence,  
\begin{align}
\lan e(\mathbb{L}_{\PP \A_4}), 
~~[\overline{\PP\A}_3] \cap [\mu] \ran & = \N(\A_1^{\delta}\PP \A_4, r, s, n_1, n_2, n_3, \theta)  
+ \mathcal{C}_{\mathcal{B}\cap \mu}.
\label{npa4_Euler_class_formula}
\end{align}
Let us now study the boundary $\mathcal{B}$. 
By \cref{pa3_closure}, we conclude that 
\begin{align*}
\mathcal{B} \cap \mu &= \overline{\PP D}_4 \cap \mu. 
\end{align*}
Since the representative $\mu$ is generic, we conclude that the directional derivative 
$f_{21}$ will not vanish on those points. Since $f_{02} =0$ on $\mathcal{B}$, 
we conclude that 
\begin{align*}
f_{02}A^f_4 &= f_{02}f_{40} - 3f_{21}^2 \neq 0 
\end{align*}
if $f_{21} \neq 0$.
Hence, the section $\Psi_{\PP A_4}$ will not vanish on 
$\mathcal{B}\cap \mu$. Hence, the total boundary contribution is zero and \cref{npa4_Euler_class_formula} 
gives us the formula of \cref{npa4}. \\ 
\hf \hf It remains to prove the claim regarding transversality. This 
follows from following the setup of proof of transversality in 
 \Cref{npa3}. We consider the polynomial 
\begin{align*}
\rho_{40}&:=  X^4 Z^{d-4}
\end{align*}
and the corresponding curve $\gamma_{40}(t)$. Transversality follows by computing the derivative of the section $\Psi_{\PP A_4}$ 
along the curve $\gamma_{40}(t)$ as before. \qed

\subsection{Proof of \Cref{npd4}: computation of $N(\mathcal{P} D_4)$} 
We will now justify our formula for 
$N(\mathcal{P} D_4,r, s, n_1, n_2, n_3, \theta)$. 
Let $\mu$ be a generic cycle, representing 
the class 
\begin{align*}
[\mu] & = \mathcal{H}_L^r \cdot \mathcal{H}_p^s \cdot a^{n_1} \lambda^{n_2} (\pi^*H)^{n_3} (\pi^*\lambda_{W})^{\theta}. 
\end{align*}
As before, let $v \in \gamma_{W}$ and $w \in \pi^*W/\gamma_{W}$ be two fixed nonzero vectors. 
Define a section of the following bundle 
\begin{align}
\Psi_{\PP \D_4}:  \overline{\PP \A}_3 
\lra \mathbb{L}_{\PP \D_4} & := \gamma_{\DD}^{\ast}\otimes(W/\gamma_W)^{\ast 2}\otimes \gamma_{\mathbb{P}^3}^{*d}, \qquad 
\textnormal{given by} \nonumber \\
\{\Psi_{\PP \D_4}([f], l_{q})\}(f\otimes w^{\otimes 2}) &:= \nabla^2f|_q(w,w). \label{psiD_4}
\end{align}
We recall \cref{pa3_closure}, namely 
\begin{align}
\overline{\PP A}_3 & = \PP A_3 \cup \overline{\PP A}_4 \cup \overline{\PP D}_4.  \label{pa3_closure_again}
\end{align}
We now define 
\begin{align*}
\mathcal{B} &:= \overline{\PP A}_3 - (\PP\A_3\cup \overline{\PP D}_4). 
\end{align*}
We will show that  
the section 
$\Psi_{\PP \D_4}$ vanishes on the points of  
$\PP D_4$ transversally.   
Hence, 
\begin{align}
\lan e(\mathbb{L}_{\PP \D_4}), 
~~[\overline{\PP\A}_3] \cap [\mu] \ran & = \N(\PP \D_4, r,s, n_1, n_2, n_3, \theta)  
+ \mathcal{C}_{\mathcal{B}\cap \mu}. \label{npd4_Euler_class_formula}
\end{align}
By definitions, the section 
$\Psi_{\PP D_4}$ does not vanish on $\PP A_4 \cap \mu$. Hence, the total boundary contribution is zero 
and \cref{npa4_Euler_class_formula} 
gives us the formula of \cref{npd4}. \\ 
\hf \hf It remains to prove the claim regarding transversality. This 
follows from following the setup of proof of transversality in 
 \Cref{npa4}. We consider the polynomial 
\begin{align*}
\rho_{02}&:=  Y^2 Z^{d-2}
\end{align*}
and the corresponding curve $\gamma_{02}(t)$. Transversality follows by computing the derivative of the section $\Psi_{\PP D_4}$ 
along the curve $\gamma_{02}(t)$ as before. \qed


\section{Verification with other results and low degree checks}
\label{low_degree_checks}
Let us make a few low degree checks. We will abbreviate $N(A_1^{\delta+1}, r,s,0,0)$ as 
$N(A_1^{\delta+1}, r,s)$.
\subsection{Verification with S.~Kleiman and R.~Piene's result} 
\label{KP_check}
Let us start by verifying the numbers predicted by the algorithm of S.~Kleiman and R.~Piene in \cite{KP2}. 
Let us explain how to obtain the formula for $N(A_1^{\delta+1}, r,s)$ using \cite[Algorithm 2.3, Page 5]{KP2}. 
Let us first define four polynomials (called Bell polynomials), given by 
\begin{align*}
P_1(a_1):= a_1, \qquad P_2(a_1, a_2)&:= a_1^2 + a_2, \qquad 
P_3(a_1, a_2, a_3):= a_1^3+3a_1 a_2 + a_3 \qquad \textnormal{and} \\ 
P_4(a_1, a_2, a_3, a_4)&:= a_1^4 + 6 a_1^2 a_2 + 3 a_2^2 + 4 a_1 a_3 + a_4.    
\end{align*}
We define the following cycles in $\mathcal{S}_{\mathcal{D}_{1}}$, namely 
\begin{align}
v&:= \lambda + dH, \qquad w_1:= a-3H \qquad \textnormal{and} \qquad w_2:= a^2-2aH+3aH^2. \label{vw}
\end{align}
Note that $v= c_1(\mathcal{L}_{A_0})$ and $w_i= c_i(T^*W)$, where $\mathcal{L}_{A_0}$ and $W$ are the 
bundles defined in section \ref{na1_delta_proof}. The algorithm \cite[Algorithm 2.3, Page 5]{KP2} 
produces polynomials $b_i(v,w_1, w_2)$ of degree $i+2$ (from $i=1$ to $8$). Let us write down the expressions explicitly, 
\begin{align}
b_1(v, w_1, w_2)&= v^3 + v^2w_1 + vw_2, \qquad 
b_2(v, w_1, w_2) = -7 v^4-13 v^3 w_1-6 v^2 w_1^2-7 v^2 w_2-6 v w_1 w_2, \nonumber \\ 
b_3(v, w_1, w_2)& = 138 v^5+394 v^4 w_1+376 v^3 w_1^2+138 v^3 w_2 +120 v^2 w_1^3+256 v^2 w_1 w_2+120 v w_1^2 w_2 \qquad \textnormal{and} \nonumber \\ 
b_4(v, w_1, w_2)& = -4824 v^6-19134 v^5 w_1-28842 v^4 w_1^2-3888 v^4 w_2-19572 v^3 w_1^3 \nonumber \\ 
                & ~~    -12438 v^3 w_1 w_2  
                -5040 v^2 w_1^4-13596 v^2 w_1^2 w_2 \nonumber \\ 
                & ~~ +936 v^2 w_2^2-5040 v w_1^3 
   w_2+936 v w_1 w_2^2.   \label{b1_defn} 
\end{align}
The numbers $N(A_1^{\delta+1}, r,s)$ will be computed from the polynomials $P_{\delta+1}$ 
by intersecting cycles in $\mathcal{S}_{\mathcal{D}_{\delta+1}}$. Let 
$\pi_i:\mathcal{S}_{\mathcal{D}_{\delta+1}} \longrightarrow \mathcal{S}_{\mathcal{D}_{1}}$ denote 
the $i^{\textnormal{th}}$ projection map. Then 
\begin{align*}
N(A_1, r,s)&=  [b_1]\cdot \mathcal{H}_L^r \cdot \mathcal{H}_p^s,
\end{align*}
where the right hand side is an intersection number on $\mathcal{S}_{\mathcal{D}_{1}}$. Note that we plug in the values for 
$v, w_1$ and $w_2$ from \cref{vw} in \cref{b1_defn}, use \cref{HL_Hp_class} for $\mathcal{H}_L$ and $\mathcal{H}_p$ 
and the ring structure  as given by \cref{ring_str} to compute the intersection number. 
Next, let us explain how to compute $N(A_1^2, r,s)$. This is given by 
\begin{align}
N(A_1^2, r,s)&= (\pi_1^*b_1)\cdot (\pi_2^*b_1) \cdot \mathcal{H}_L^r \cdot \mathcal{H}_p^s + b_2 \cdot \mathcal{H}_L^r \cdot \mathcal{H}_p^s.  
\label{two_nodes_kp}
\end{align}
The first number on the right hand side of \cref{two_nodes_kp} is an intersection number on 
$\mathcal{S}_{\mathcal{D}_{2}}$, while the second one is an intersection number on $\mathcal{S}_{\mathcal{D}_{1}}$. Similarly, 
\begin{align*}
N(A_1^3, r,s)&= 
\Big((\pi_1^*b_1)\cdot (\pi_2^*b_1) \cdot (\pi_3^*b_1)  + 3 (\pi_1^*b_1)\cdot (\pi_2^*b_1) + b_3\Big) \cdot 
\mathcal{H}_L^r \cdot \mathcal{H}_p^s \qquad \textnormal{and} \\ 
N(A_1^4, r,s)&= \Big((\pi_1^*b_1)\cdot (\pi_2^*b_1)\cdot (\pi_3^*b_1) \cdot (\pi_4^*b_1) + 
6 (\pi_1^*b_1) \cdot (\pi_2^*b_1) \cdot (\pi_3^*b_1)  \\ 
& \qquad + 3 (\pi_1^*b_2)\cdot (\pi_2^*b_2)  + 4 (\pi_1^*b_1)\cdot (\pi_2^*b_3) + b_4 \Big)\cdot \mathcal{H}_L^r \cdot \mathcal{H}_p^s. 
\end{align*}
We have written a mathematica program to implement this formula and verified that the answers agree with our formula.

\subsection{Verification with T.~Laraakker's result} 
Next we note that in \cite[Appendix A,  Page 32]{TL}, T.~Laraakker has explicitly written down the formulas for $N(A_1^{\delta+1},0,0)$. 
We have verified that our formulas agree with his. 


\subsection{Verification with the second author and R.~Singh's result}
We now verify some  of the numbers obtained by R.~Mukherjee and R.~Singh in \cite{RS}.   
In \cite{RS}, the authors compute $C_d^{\textnormal{Planar}, \mathbb{P}^3}(r,s)$, the number of planar genus zero degree $d$ curves in 
$\mathbb{P}^3$ intersecting $r$ lines and passing through $s$ points having a cusp (where $r+2s = 3d+1$). Let us compare this with 
$N_d(A_1^{\delta} A_2, r, s)$, the number 
of planar degree $d$ curves in $\mathbb{P}^3$, passing through $r$ lines and 
passing through $s$ points,
that have $\delta$ (ordered) nodes and one cusp
(where $r +  2s =\dfrac{d(d+3)}{2} +1-\delta$).
For 
$d =3$, and $\delta =0$, this number should be the same as the characteristic number of genus zero planar cubics in $\mathbb{P}^3$ with a cusp, 
i.e. $C_d(r, s)$.  We have verified that is indeed the case. We tabulate the numbers for the readers convenience: 
\begin{align*}
C_3(10, 0)& = 17760, \quad C_3(8, 1)  = 2064, \quad C_3(6, 2) = 240 \quad \textnormal{and} \qquad C_3(4, 3) = 24. 
\end{align*}
These numbers are the same as $N_d(A_1^{\delta} A_2, r, s)$ for $d=3$ and $\delta =0$. \\ 
\hf \hf Next, we note that when $d=4$ and $\delta =2$, the number $\frac{1}{\delta !}N_d(A_1^{\delta} A_2, r, s)$ 
is same as the characteristic number of genus zero planar quartics in $\mathbb{P}^3$ with a cusp, i.e.   
$C_d(r, s)$.  We have verified that fact. The numbers are  
\begin{align*}
C_4(13, 0)& = 10613184, \quad C_4(11, 1) = 760368, \quad C_4(9, 2)  = 49152 \quad \textnormal{and} \quad C_4(7, 3) = 2304.
\end{align*}
These numbers are the same as $\frac{1}{2!}N_d(A_1^{\delta} A_2, r, s)$ for $d=4$ and $\delta =2$. 
We have to divide out by a factor of $\delta!$ because in the definition of 
$N_d(A_1^{\delta} A_2, r, s)$, the nodes are ordered.

\subsection{Enumerativity of BPS numbers computed by R.~Pandharipande}
We will now verify some of the numbers predicted by the conjecture made by Pandharipande in \cite{RPDeg}, 
regarding the enumerativity of the BPS numbers for $\mathbb{P}^3$.  
Let $N^d_g(r,s)$ denote the genus $g$ Gromov-Witten 
invariant of $\mathbb{P}^3$ (corresponding to the insertion of $r$ lines and $s$ points) 
and let $E^d_g(r,s)$ denote the corresponding BPS invariant as given by 
\cite[Equations 5 and 9, Pages 493 and 494]{RPDeg}. 
The  numbers $E^d_g(r,s)$ are conjectured to be integers. Even if the conjecture is true, 
it is not always clear if the the BPS numbers have an enumerative significance. We will now give some evidence 
for the enumerativity of some of the BPS number. \\ 
\hf \hf Let us consider the case $g=2$ and $d=4$. It is far from clear that $E^d_2(r,s)$ is enumerative when $d=4$, 
because the moduli space of curves has  more than the expected dimension 
(see the remark in \cite{RPDeg} just after Theorem 3, Page 494). We claim that $E^d_2(r,s)$ is enumerative when $d=4$.  
To see how, 
we first note that every degree $4$, genus $2$ curve lies inside some $\mathbb{P}^2$ 
(this follows from the Castelnuovo bound, \cite[Page 527]{GH}). Since the genus of a smooth degree $4$ curve is $3$, 
we conclude that the corresponding enumerative invariant is equal to the characteristic number of planar degree $4$ 
curves in $\mathbb{P}^3$ with one node. We have verified that $E^d_2(r,s)$ is indeed equal to $N_d(A_1, r,s)$ for all $r$ and $s$ when $d=4$. 
We tabulate the numbers for the readers convenience 
\begin{align}
N_4(A_1, 16, 0)&= 258300, \quad N_4(A_1, 14, 1)= 15498, \quad 
N_4(A_1, 12, 2)= 792 \quad \textnormal{and} \quad N_4(A_1, 10, 3) = 27. 
\label{BPS_g_2}
\end{align}
The degree four, genus two BPS numbers are directly tabulated in \cite[Page 43]{AGath} and are seen to be equal to the above numbers
listed in \cref{BPS_g_2}.  



\section{Explicit Formulas} 
\label{expfor}
\noindent For the convenience of the reader, we write down some explicit formulas. 
\begin{align*}
N(r,s, 0, 0) &= \begin{cases}
\frac{1}{324} d(d^2-1)(d+2)\left(d^2 +4d +6 \right) \left(2d^3 +6d^2+13d+3 \right) & \mbox{if} ~~ s =0,  \\
\frac{1}{36}d(d^2-1)(d+2) \left( 2d^2 +8d +3 \right) & \mbox{if} ~~s=1,\\
\frac{1}{3}d(d-1)(d+4) & \mbox{if} ~~s= 2, \\
1 & \mbox{if} ~~s= 3.
\end{cases} \\
N(A_1, r,s, 0, 0) &= \begin{cases}
 \frac{1}{108}d(d^2 -1)^2 (d+2)(d+3) \left(2d^4 +4d^3 +d^2 -10d -6 \right) & \mbox{if} ~~ s =0,  \\
 \frac{1}{12} d(d-1)^2(d+3) \left( 2d^4 +6d^3 -9d^2 -3d -2 \right) & \mbox{if} ~~s=1,\\
d(d-1)^2 \left( d^2 +3d-6 \right)  & \mbox{if} ~~s= 2, \\
 3(d-1)^2 & \mbox{if} ~~s= 3.
\end{cases}\\
N(A_2, r,s, 0, 0) &= \begin{cases}
 \frac{1}{27} d(d^2 -1)(d^2 -4) \left( 2 d^6+12 d^5+11 d^4 -30 d^3-49 d^2-18 \right) & \mbox{if} ~~ s =0,  \\
 \frac{1}{3} d(d-1)(d-2) \left(2 d^5+12 d^4+d^3-54 d^2+9 d+6 \right) & \mbox{if} ~~s=1,\\
4d(d-1)(d-2) \left(d^2+3 d-8\right) & \mbox{if} ~~s= 2, \\
 12(d-1)(d-2) & \mbox{if} ~~s= 3.
\end{cases} \\
N(A_3, r,s, 0, 0) &= \begin{cases}
 \frac{1}{162} d(d-1)(d-2)\big( 50 d^8+408 d^7+539 d^6-2556 d^5-6625 d^4 \\
 \qquad \qquad \qquad \qquad +762 d^3+10050 d^2-11232 d+8208\big) & \mbox{if} ~~ s =0,  \\
 \frac{1}{18} (d-2) (d-1) \big(50 d^6+258 d^5-485 d^4-2241 d^3 \\
 \qquad \qquad \qquad \qquad \qquad \qquad \qquad +2172 d^2+1512 d-648\big) & \mbox{if} ~~s=1,\\
\frac{2}{3} d(d-2)(d+5) \left(25 d^2-96 d+84\right) & \mbox{if} ~~s= 2, \\
2 \left(25 d^2-96 d+84\right) & \mbox{if} ~~s= 3.
\end{cases}\\
N(A_4, r,s, 0, 0) &= \begin{cases}
 \frac{5}{27} (d-1)(d-3) \big(6 d^9+50 d^8+41 d^7-445 d^6-715 d^5 \\
 \qquad \qquad \qquad  +1529 d^4+2720 d^3-7902 d^2+7164 d-2160\big) & \mbox{if} ~~ s =0,  \\
 \frac{5}{3} (d-3) \big(6 d^7+26 d^6-105 d^5-231 d^4 \\
 \qquad \qquad \qquad \qquad +765 d^3-107 d^2-762 d+360\big) & \mbox{if} ~~s=1,\\
 20 d(d-3) (3 d-5) \left(d^2+3 d-12\right) & \mbox{if} ~~s= 2, \\
 60(d-3)(3d -5) & \mbox{if} ~~s= 3.
\end{cases}\\
N(D_4, r,s, 0, 0) &= \begin{cases}
\frac{5}{36} (d-1)(d-2)^2 (d+4) \big(2 d^7+12 d^6-d^5-66 d^4 -91 d^3 \\
\qquad \qquad \qquad \qquad \qquad +234 d^2-270 d+108\big) & \mbox{if} ~~ s =0,  \\
\frac{5}{4} (d-2)^2 \left(2 d^6+12 d^5-15 d^4-102 d^3+85 d^2+90 d-48\right) & \mbox{if} ~~s=1,\\
15 d(d-2)^2 \left(d^2+3 d-12\right) & \mbox{if} ~~s= 2, \\
 45 (d-2)^2 & \mbox{if} ~~s= 3.
\end{cases}\\
N(A_1^2, r,s, 0, 0) &= \begin{cases}
\frac{1}{108} d(d^2-1)(d^2-4) \big(6 d^8+30 d^7-25 d^6-255 d^5-142 d^4 \\
\qquad \qquad \qquad+333 d^3+629 d^2+18 d+198\big) & \mbox{if} ~~ s =0,  \\
\frac{1}{12} d(d-1)(d-2) \big(6 d^7+30 d^6-55 d^5-297 d^4+190 d^3 \\
\qquad \qquad \qquad \qquad +537 d^2-69 d-78\big) & \mbox{if} ~~s=1,\\
 d(d-1)(d-2) \left(d^2+3 d-8\right) \left(3 d^2-3 d-11\right) & \mbox{if} ~~s= 2, \\
3 (d-1)(d-2) \left(3 d^2-3 d-11\right) & \mbox{if} ~~s= 3.
\end{cases}\\
\end{align*}

\begin{align*}   
N(A_1 A_2, r,s, 0, 0) &= \begin{cases}
 \frac{1}{27} d(d-1)(d-2)(d-3) \big(6 d^9+60 d^8+155 d^7-186 d^6 \\
  \qquad -1288 d^5 -1422 d^4+641 d^3+1512 d^2-2034 d+1836\big) & \mbox{if} ~~ s =0,  \\
 \frac{1}{3} (d^2 -1)(d-2)(d-3) \big(6 d^6+36 d^5-37 d^4-338 d^3 \\
 \qquad \qquad \qquad  +123 d^2+438 d-144\big) & \mbox{if} ~~s=1,\\
 4 d(d-2)(d-3)(d+5) \left(3 d^3-6 d^2-11 d+18\right) & \mbox{if} ~~s= 2, \\
 12 (d-3) \left(3 d^3-6 d^2-11 d+18\right) & \mbox{if} ~~s= 3.
\end{cases}\\
N(A_1 A_3, r,s, 0, 0) &= \begin{cases}
 \frac{1}{54} (d-1)(d-3) \Big(50 d^{11}+358 d^{10}-489 d^9-6967 d^8 \\
-3139 d^7 +40955 d^6+40482 d^5-112250 d^4-131080 d^3 \\
 \qquad \qquad +436176 d^2-402480 d+120960\Big) & \mbox{if} ~~ s =0,  \\
\frac{1}{6} (d-3) \big(50 d^9+158 d^8-1471 d^7-2389 d^6+14857 d^5 \\
\qquad \qquad +2359 d^4 -41156 d^3+7912 d^2+41808 d-19440\big) & \mbox{if} ~~s=1,\\
 2 d(d-3) \left(d^2+3 d-12\right) \left(25 d^3-71 d^2-122 d+280\right) & \mbox{if} ~~s= 2, \\
 6 (d-3) \left(25 d^3-71 d^2-122 d+280\right) & \mbox{if} ~~s= 3.
\end{cases}\\
N(A_1^3, r,s, 0, 0) &= \begin{cases}
\frac{1}{108} d(d-1)(d-2) \Big(18 d^{12}+108 d^{11}-315 d^{10}-2664 d^9 \\
+470 d^8+21919 d^7+19103 d^6-58136 d^5-106948 d^4 \\
\qquad \qquad +7039 d^3+129360 d^2-165798 d+110700\Big) & \mbox{if} ~~ s =0,  \\
\frac{1}{12} (d-1)(d-2) \Big(18 d^{10}+54 d^9-567 d^8-1179 d^7+6383 d^6 \\
 +7774 d^5-25775 d^4-20197 d^3+26955 d^2+20802 d-8640\Big) & \mbox{if} ~~s=1,\\
 d(d-2)(d+5) \Big(9 d^6-54 d^5+9 d^4+423 d^3 \\
 \qquad \qquad \qquad \qquad \qquad \qquad -458 d^2-829 d+1050\Big) & \mbox{if} ~~s= 2, \\
 3 \left(9 d^6-54 d^5+9 d^4+423 d^3-458 d^2-829 d+1050\right) & \mbox{if} ~~s= 3.
\end{cases}\\ 
N(A_1^2 A_2, r,s, 0, 0) &= \begin{cases}
 \frac{1}{9} (d-1)(d-3) \Big(6 d^{13}+36 d^{12}-159 d^{11}-1124 d^{10}+1209 d^9 \\
 \qquad +12169 d^8+664 d^7-52991 d^6-39896 d^5+127254 d^4 \\
 \qquad \qquad +129112 d^3-452904 d^2+413280 d-120960\Big) & \mbox{if} ~~ s =0,  \\
 (d-3) \Big(6 d^{11}+12 d^{10}-249 d^9-236 d^8+3653 d^7+367 d^6 \\
  -20186 d^5+6389 d^4+38600 d^3-7828 d^2-42896 d+19680\Big) & \mbox{if} ~~s=1,\\
 12 d(d-3) \left(d^2+3 d-12\right) \big(3 d^5-12 d^4-30 d^3 \\
 \qquad \qquad \qquad \qquad \qquad \qquad \qquad +125 d^2+82 d-280\big) & \mbox{if} ~~s= 2, \\
 36 (d-3) \left(3 d^5-12 d^4-30 d^3+125 d^2+82 d-280\right) & \mbox{if} ~~s= 3.
\end{cases}
\end{align*}
\begin{align*} 
N(A_1^4, r,s, 0, 0) &= \begin{cases}
 \frac{1}{36} (d-1)(d-3) \Big(18 d^{15}+90 d^{14}-747 d^{13}-3843 d^{12}+11660 d^{11} \\
 +63140 d^{10}-75352 d^9-486678 d^8+73143 d^7+1773729 d^6+1150606 d^5 \\
 \qquad -4123550 d^4-3282032 d^3+12893256 d^2-11795040 d+3404160\Big) & \mbox{if} ~~ s =0,  \\
 \frac{1}{4} (d-3) \Big(18 d^{13}+18 d^{12}-945 d^{11}-261 d^{10}+18590 d^9-4254 d^8 \\
 \qquad -164328 d^7+80206 d^6+653953 d^5-362481 d^4-1051128 d^3 \\
 \qquad \qquad \qquad +245636 d^2+1215312 d-554880\Big) & \mbox{if} ~~s=1,\\
 3d(d-3) \left(d^2+3 d-12\right) \Big(9 d^7-45 d^6-135 d^5+801 d^4 \\
 \qquad \qquad \qquad +691 d^3-4671 d^2-1386 d+7880\Big) & \mbox{if} ~~s= 2, \\
 9 (d-3) \big(9 d^7-45 d^6-135 d^5+801 d^4+691 d^3 \\
 \qquad \qquad \qquad -4671 d^2-1386 d+7880\big) & \mbox{if} ~~s= 3.
\end{cases}
\end{align*} 

\section{Acknowledgment} 
The ideas of this paper originated while the second author had discussions with 
Martijn Kool and Ties Laarakker regarding the papers \cite{BM8} and \cite{KST}. During the discussion 
we wondered if one can count planar curves in $\mathbb{P}^3$ with singularities. 
As shown by Ties Laarakker in \cite{TL}, one can adapt the techniques in \cite{KST} 
to count $\delta$-nodal planar curves in $\mathbb{P}^3$. On the other hand, the 
discussions also led us to conclude that by adapting the methods of \cite{R.M}, 
\cite{BM13_2pt_published} and \cite{BM8}, we can enumerate planar curves in  
$\mathbb{P}^3$ with singularities that are more degenerate than nodes. The result is this 
present paper. The second author is therefore very grateful to Martijn Kool and Ties Laarakker 
for the discussions and fruitful exchange of ideas that resulted in this paper. 
The second author would  also like to thank Steven Kleiman for his comments 
and pointing out that the ideas of \cite{KP1} and \cite{KP2} combined can be used 
to compute the numbers obtained in this paper (see \Cref{KP_remark}).  
The second author 
would also like to thank Center for Quantum Geometry of Moduli Space at Aarhus, Denmark (DNRF95) for giving him a chance to spend six 
weeks 
there, when the author  got the initial idea for this project; the visit was 
was mainly paid by the grant ``EU-IRSES Fellowship within FP7/2007-2013 under grant 
agreement number 612534, project MODULI - Indo European Collaboration on Moduli Spaces." 
Finally, the second author would like to acknowledge the External Grant 
he has obtained, namely 
MATRICS (File number: MTR/2017/000439) that has been sanctioned by the Science and Research Board (SERB). 
Both the authors are grateful to Anantadulal Paul and Rahul Singh for several fruitful discussions.   

\bibliographystyle{siam}

\begin{thebibliography}{10} 
\bibitem{R.M}
{\sc S.Basu and R.Mukherjee },
       {\em Enumeration of curves with one singular point},  Journal of Geometry and Physics,104 (2016), 175--203.\\
\bibitem{BM13_2pt_published}
{\sc S.Basu and R.Mukherjee },
       {\em  Enumeration of curves with two singular point}, Bull.Sci.math, 139(2015), 667--735.\\
\bibitem{BM8}
       {\sc S.Basu and R.Mukherjee },
       {\em  Counting curves in a linear system with upto eight singular points},\\
available at \url{https://arxiv.org/abs/1909.00772}. \\       
\bibitem{Berczi}
{\sc G.~B\'{e}rczi}, Tautological integrals on curvilinear Hilbert schemes , Geometry and 
Topology, (2017), pp.~2897--2944. \\
\bibitem{FB} {F.~Block}, \newblock {\em Relative node polynomials for plane curves}, J. Algebraic Combin., 36 (2012)
no. 2, pp.~ 279--308.\\
\bibitem{CH}
{\sc L.~Caporaso and J.~Harris}, {\em Counting plane curves of any genus},
  Invent. Math., 131 (1998), pp.~345--392.\\ 
\bibitem{FoMi}
{\sc S. Fomin and G. Mikhalkin} {\em Labelled floor diagrams for plane curves}, J. Eur. Math. Soc., 12 (2010), no. 6, pp.~ 1453--1496.\\
\bibitem{AGath}
{\sc A. Gathmann} {\em Gromov-Witten invariants of hypersurfaces}, 
Habilitation Thesis, TU Kaiserslautern (2003), available at 
\url{https://www.mathematik.uni-kl.de/~gathmann/en/publications.php}\\ 
\bibitem{GH}
{\sc P. Griffiths and J. Harris} {\em Principles of Algberaic Geometry}, Wiley Classics Library, 1994. \\ 
\bibitem{Kaz}
{\sc M.~{\`E}. Kazarian}, {\em Multisingularities, cobordisms, and enumerative
  geometry}, Uspekhi Mat. Nauk, 58 (2003), pp.~29--88. \\
\bibitem{Ker1}
{\sc D.~Kerner}, {\em Enumeration of singular algebraic curves}, Israel J.
  Math., 155 (2006), pp.~1--56.\\ 
\bibitem{Ker2}
{\sc D.~Kerner}, {\em On the enumeration of complex plane curves with two singular points},
IMRN, (23):4498--4543, 2010.\\
\bibitem{Kes} 
{\sc S.~Kesavan}, {\em Non Linear Functional Analysis a First Course}, 
Texts and Readings in Mathematics 28, Hindustan Book Agency, 2004.\\ 
\bibitem{KP1}
{\sc S.~Kleiman and R.~Piene}, {\em Enumerating singular curves on surfaces},
  in Algebraic geometry: {H}irzebruch 70 ({W}arsaw, 1998), vol.~241 of Contemp.
  Math., Amer. Math. Soc., Providence, RI, 1999, pp.~209--238. \\ 
\bibitem{KP2}
{\sc S.~Kleiman and R.~Piene}, {\em Node polynomial for families: methods and applications},
  Math. Nachr. 271 (2004), pp. ~69-90. \\ 
  \bibitem{KP3} 
  {\sc S.~Kleiman}, {\em Personal communication}. \\
\bibitem{KST}
{\sc M.~Kool, V.~Shende, and R.~P. Thomas}, {\em A short proof of the
  {G}\"ottsche conjecture}, Geom. Topol., 15 (2011), pp.~397--406. \\ 
\bibitem{TL}  {\sc T.~Laarakker}, {\em The Kleiman-Piene Conjecture and node polynomials for plane curves in $\mathbb{P}^3$}, 	
Sel. Math. New Ser., 24 (2018), pp.~4917--4959. \\ 
\bibitem{MPS}
{\sc R.~Mukherjee, A.~Paul and R.~Singh},
       {\em  Enumeration of Rational Curves in a Moving family of $\mathbb{P}^2$}, Bull.Sci.math, 150 (2019), 1--11.\\
\bibitem{RS}
{\sc R.~Mukherjee and R.~Singh}, {\em Enumeration of Rational Cuspidal Curves in a Moving family of $\mathbb{P}^2$}, \\ 
available at \url{https://arxiv.org/abs/2005.10664}. \\ 
\bibitem{RPDeg}
{\sc R.~Pandharipande}, 
{\em Hodge Integrals and Degenerate Contributions}, Comm. Math. Phys, Vol 208, (1999), Issue 2, pp 489--506.  \\ 
\bibitem{Ran1}
{\sc Z.~Ran}, {\em Enumerative geometry of singular plane curves}, Invent.
  Math., 97 (1989), pp.~447--465. \\ 
\bibitem{Ran}
{\sc Z.Ran}, \newblock {\em Enumerative geometry of divisorial families of rational curves}, 
Annali della Scuola Normale Superiore di Pisa - Classe di Scienze, Série 5, Volume 3 (2004) no. 1, pp. 67-85.\\ 
\bibitem{Tz}
{\sc Y.-J. Tzeng}, {\em A proof of the {G}\"ottsche-{Y}au-{Z}aslow formula}, J.
  Differential Geom., 90 (2012), pp.~439--472.\\
\bibitem{Tzeng_Li}
{\sc Y.-J. Tzeng and J.~Li}, {\em Universal polynoials for singular curves on
  surfaces}, Compos. Math., 150 (2014), pp.~1169--1182. \\ 
\bibitem{Van}
{\sc I.~Vainsencher}, {\em Enumeration of {$n$}-fold tangent hyperplanes to a
  surface}, J. Algebraic Geom., 4 (1995), pp.~503--526. \\
\bibitem{Zu}
{\sc H.~Zeuthen}, {\em Almindelige egenskaber ved systemer af plane kurver},
  Kongelige Danske Videnskabernes Selskabs Skrifter, 10 (1873), pp.~285--393. \\
\bibitem{Zin}{\sc A.~Zinger}, {\em Counting plane rational curves: old and new approaches},
available at \\ \url{http://arxiv.org/abs/math/0507105}. \\ 
\bibitem{Zing_Notes}
{\sc A.~Zinger}, {\em Notes on enumerative geometry}, \newblock available at \\ 
\url{http://www.math.stonybrook.edu/~azinger/mat620/EGnotes.pdf}. 
  
  


\end{thebibliography}

\end{document}